%% file: main.tex
\documentclass[10pt,english, oneside,reqno]{smfart}

\usepackage[T1]{fontenc}
\usepackage[english]{babel}
\usepackage[utf8]{inputenc}

\usepackage[dvipsnames]{xcolor}
\usepackage{appendix}
\usepackage{lipsum}
\usepackage{amsmath,amssymb}
\usepackage{amsfonts}
\usepackage{smfthm}
\usepackage{mathrsfs}
\usepackage{systeme}
\usepackage{stmaryrd}
\usepackage{pgfplots}
\usepackage{minted}
\usepackage{enumitem}
\usepackage{nicematrix}
\usepackage{todonotes}
\usepackage{mathabx}
\usepackage{mathtools}
\usepackage{centernot}
\usepackage{verbatim}
\usepackage{stmaryrd}

\newcommand{\NN}{\mathbb{N}}
\newcommand{\RR}{\mathbb{R}}

\renewcommand{\tilde}{\widetilde}

\newcommand{\vare}{{\varepsilon}}
\newcommand{\vphi}{{\varphi}}
\newcommand{\hatsig}{{\tilde{\sigma}}}

\DeclareMathOperator{\sech}{sech}
\DeclareMathOperator{\supp}{Supp}

\usepackage{lastpage}
\usepackage{blindtext}
\usepackage{algorithm}
\usepackage{algpseudocode}
\usepackage{graphicx}
\usepackage{wrapfig}
\usepackage{tikz}

\usepackage[top=2.5cm,bottom=2.5cm,margin=2.5cm]{geometry}
\usepackage{fancyhdr}
\usepackage{hyperref}
\hypersetup{colorlinks=true, linkcolor=black, filecolor=black, citecolor = black, urlcolor=gray,}
\usepackage{tikz}
\usetikzlibrary{patterns}
\usepackage{calrsfs}
\usepackage{systeme}
\usepackage{amsfonts}
\usepackage{stmaryrd}
\usepackage{amsthm}
\usepackage{mathtools}
\usepackage{amsmath}
\usepackage{dsfont} 
\usepackage{amssymb}

\usepackage{lipsum}

\usepackage{graphicx}
\graphicspath{ {./images/} }
\newtheorem{definition}{Definition}[section]
\newtheorem{theorem}{Theorem}[section]

\newtheorem{remark}{Remark}[section]
\newtheorem{lemma}[theorem]{Lemma}
\newtheorem{proposition}[theorem]{Proposition}

\usepackage{pgfplots}
\pgfplotsset{compat=1.5}
\usepackage{mathrsfs}
\usetikzlibrary{arrows}
\usepackage[
   backend=biber,        
   sorting=nyt,          
]{biblatex}
\usepackage{csquotes}
\usepackage{graphicx} 
\usepackage{tikz}
\usetikzlibrary{positioning}
\usepackage{inputenc}

\usepackage{emptypage}

\title{Slowly travelling infinite point blow-up \\ for the critical generalized KdV equation}
\author{Nailya Manatova
\\ \textsc{Laboratoire de mathématiques de Versailles}
\\ \textsc{Université de Versailles St-Quentin en Y., CNRS}
\\ \textsc{Université Paris-Saclay}
\\ nailya.manatova@uvsq.fr}

\begin{document}
\begin{abstract}
    We study the finite time blow up phenomenon for the quintic, mass critical gKdV equation.
    We prove the existence of a class of solutions $U$ with infinite point, finite time blow up behavior, at the particular blow up rate 
    $$\|\partial_x U(t)\|_{L^2} \sim (T-t)^{-\nu}\quad \text{as} \quad t \uparrow T,$$
    where $\nu = \frac 12$, $T$ is the blow up time and where the travel speed of the blow up bubble is logarithmic. Therefore, we call this behaviour slowly travelling infinite point blow up.
    The special blow up rate $\nu =\frac 12$ is a threshold which separates finite and infinite point bubbling. In a previous work \cite{1M}, the author constructed other infinite point blow up solutions for the continuum of rates $\nu\in(\frac 12,1)$, using polynomial tails in the space variable and extending the results in \cite{MMPIII}, restricted to $\nu> \frac{11}{13}$.
    However, that work suggested a change of the tail for the threshold case.
    In the present paper, we consider an exponentially decaying tail on the right in space.
    As in \cite{1M}, the initial data can be taken arbitrarily close to the ground state in $H^1$.
    
    From a technical perspective, in addition to the change of tail, we have to adapt the energy-virial functional to the presence of the exponential tail, by modifying a scaling term used to control the right-hand side of the blow up solution.
\end{abstract}

\maketitle

\input{Intro}

\input{Decaying_tail}

\input{Decomposition_around_Q} 
\input{Energy_estimates}

\input{Construction_of_exploding_solution}

\printbibliography
\end{document}

%% file: Intro.tex
\
\section{Introduction}


\subsection{Presentation of the problem}

We consider the $L^2$ critical generalized Korteweg-de Vries equation (gKdV)
\begin{equation}\tag{gKdV}\label{gKdV_principal_eq}
    \partial_t U + \partial_x(\partial_{xx}U + U^5)=0, \qquad (t,x)\in [0,T) \times\RR,
\end{equation}
where $U(t,x)$ is a real valued function. In the following, we write $H^1$ instead of $H^1(\RR)$
and similarly for other spaces of functions.

The Cauchy problem is locally well-posed in the energy space $H^1$ from the works \cite{Kato-1983}, \cite{KPV93}. For a given $U_0 \in H^1$, there exists a unique maximal solution $U$ of \eqref{gKdV_principal_eq} in a functional space included in $C([0,T), H^1)$.
Moreover, \cite[Corollary 1.4]{Kato-1983} states that a solution $U$ whose maximal time of existence $T$ is finite has to blow up at $T$ and more precisely has to satisfy
\begin{equation}\label{intro_rule_out_of_less_that_frac13}
    \liminf_{t\uparrow T} (T-t)^{\frac 13} \|\partial_x U(t)\|_{L^2} > 0.
\end{equation}

For an $H^1$ solution, the mass and the energy, defined respectively by
\begin{equation}
    M(U)(t) = \int U^2 (t,x) \, dx, \qquad E(U)(t) = \frac 12 \int(\partial_x U)^2(t,x)\, dx - \frac 16 \int U^6(t,x)\, dx\,,
\end{equation}
are conserved along the evolution in time.
 
The equation is invariant under scaling and translation: if $U$ is a solution, 
then for any $\lambda>0$, $\sigma \in \RR$,
\begin{equation}
    U_{\lambda,\sigma}(t,x) := \lambda^{\frac 12}U(\lambda^3 t, \lambda x + \sigma)
\end{equation}
is also a solution to \eqref{gKdV_principal_eq}. The scaling symmetry leaves the $L^2$ norm invariant, so the problem is mass critical.

Recall that there exists a unique (up to translations) positive solution of the equation
\begin{equation}\label{equation_of_Q}
    -Q'' + Q - Q^5 =0 \quad \text{on}\; \RR
\end{equation}
which is given by
\begin{equation}
    Q(x) = \left( 3 \sech^2(2x) \right)^{\frac 14}.
\end{equation}

The family of solitary wave solutions of \eqref{gKdV_principal_eq} is given by 
\begin{equation}
    \lambda^{-\frac 12}_0 \,Q\big(\lambda^{-1}_0 (x - \lambda^{-2}_0 t - \sigma_0) \big) \quad\text{for}\quad  (\lambda_0,\sigma_0) \in (0,+\infty)\times \RR. 
\end{equation}

The ground state $Q$ satisfies $E(Q) =0$ and reaches the optimal constant in the sharp Gagliardo-Nirenberg inequality (see for instance \cite{Weinstein-1983})
\begin{equation}\label{gagliardo-nirenberg}
    \frac 13\int \phi^6 \leq \bigg( \frac{\int \phi^2}{\int Q^2} \bigg)^2\, \int (\partial_x \phi)^2, \qquad \forall \phi \in H^1.
\end{equation}
The Gagliardo-Nirenberg inequality and the mass-energy conservation yields the global existence and $H^1$ boundedness of the solution with $H^1$ initial data such that
\begin{equation}
    \|U_0\|_{L^2} < \|Q\|_{L^2}.
\end{equation}


\subsection{Related results on the finite time blow up of gKdV}
We focus on finite time blowing up solutions satisfying 
\begin{equation}\label{rate}
    \|  \partial_x U(t)\| \sim C (T-t)^{-\nu} \quad \hbox{as $t\to T$,}
\end{equation}
where $\nu > 0$ is called the \textit{blow up rate}.
Moreover, we are interested in blow up solutions with slightly supercritical mass initial data $U_0$, which means that for some $\delta>0$ small enough, it holds
\begin{equation}\label{10}
    \|Q\|_{L^2} \leq \|U_0\|_{L^2} < (1+\delta)\|Q\|_{L^2}.
\end{equation}

According to the result stated in \eqref{intro_rule_out_of_less_that_frac13}, the blow up rate must satisfy $\nu \geq \frac 13$. The work \cite{Martel-Merle-02} rules out the blow up rate $\nu = \frac 13$ when \eqref{10} holds. 
This work also proves that the solitary wave is the only attractor in this regime in the sense that the following asymptotic behavior holds
\begin{equation}\label{parametres}
    \lambda^{\frac 12}(t) U(t, \lambda(t) x + \sigma(t)) \rightharpoonup \pm Q \quad 
 \mbox{as $t\uparrow T$ in $H^1$ weak,}
\end{equation}
where the functions $\lambda$ and $\sigma$, respectively called translation parameter and scaling parameter satisfy in addition
\begin{equation}
    \lim_{t\uparrow T}\lambda(t) \|\partial_x U(t)\|_{L^2}=\|Q'\|_{L^2},\quad
    \lim_{t\uparrow T}\lambda^2(t)\sigma(t) = 1.
\end{equation}
Inserting \eqref{rate}, this shows that infinite point blow up corresponds to $\nu \geq \frac 12$ and finite point blow up occurs for $\frac 13 <\nu < \frac 12$. 

In the series of works \cite{MMPI}, \cite{MMPIII}, \cite{MMPII}, the authors studied the infinite point blow up regime more precisely. In particular, they have approached the problem by introducing an approximate blow up profile $Q_b = Q+b P_b$, where $b$ is a small additional parameter, instead of the solitary wave itself. They have also introduced a functional combining  Kato-type monotonicity properties of \eqref{gKdV_principal_eq} (see \cite{Kato-1983}),
energy estimates for the linearised problem around $Q$ (see \cite{Weinstein-85})
and virial-type identities (see \cite{Martel-Merle-00}). 
Summarizing, their work established the following facts:
\begin{enumerate}
    \item For the critical mass ($\|U_0\|_{L^2} = \|Q\|_{L^2} $), there exists a unique blowing-up solution $S(t)$ with the rate $\nu =1$. Furthermore, the behaviour of $S$ is known for all negative times.
    \item For slightly supercritical mass solutions, there exists a set $\mathcal{A}$
    of initial data (included in $H^1$), in which all the possible behaviours are classified. Bubbling necessarily occurs at the blow up rate $\nu =1 $ and is an open property in this topology. The other two possible behaviours are global solutions that converge to a final solitary wave, and an exit from the soliton neighbourhood by vanishing. 
    Moreover, the soliton behaviour is a $C^1$ co-dimension one manifold which separates the  other two cases. (See \cite{Nakanishi-16}.)
    \item There exists a large class of exotic blow up solutions with any rate $\nu >\frac{11}{13}$. Moreover, these solutions can be taken arbitrarily close to the ground state $Q$,  but do not belong to $\mathcal A$. Such solutions are constructed by perturbing an approximate blow up profile with a decaying polynomial tail on the right. 
    In \cite{MMPIII}, the decaying tail is of the polynomial form $x^{-\theta}$ where $\theta \in \big(1,\frac{29}{18}\big)$.
    Global, infinite time blow up solutions with $\|U(t)\|_{H^1}\sim C t^{\nu}$ with $t \uparrow +\infty$ for all $\nu>0$ are also constructed in this work.
\end{enumerate}

It turns out that the range $\nu >\frac{11}{13}$ found in \cite{MMPIII} was not optimal.
A recent work \cite{1M} has covered the blow up rates $\nu \in(\frac 12,1)$. There, the decaying tail was considered to be $x^{-\theta}$ with $\theta \uparrow +\infty$ as $\nu \downarrow \frac 12$ and the energy-virial functional had to be adapted to such potentially very decaying tails (when $\theta$ is large).
Moreover, the relation between $\nu$ and $\theta$ 
indicates that, in order to reach the threshold rate $\nu =\frac 12$, 
the decay behaviour on the tail must be changed for an exponential one. 
However, the energy estimates proved in \cite{1M} to construct the solution for
any polynomial tail do not extend to the threshold case. Indeed, the margin $\nu -\frac 12 >0$ allowed to compensate some terms in the energy estimates. 
In the case of $\nu = \frac 12$, this extra room is not available and one has to
further modify the energy-virial functional to address this issue.

To complete this short review on the blow up problem for \eqref{gKdV_principal_eq}, 
we observe that the first example of a finite point blow up solution was constructed in \cite{Martel-Pilod-2024} for the blow up rate $\nu = \frac 25$. More recently, the work \cite{Martel-Pilod-2026} constructed solutions with any blow up rate $\nu\in\big(\frac 37, \frac 12 \big)$, corresponding to a blow up residue of the form $x^{\alpha-\frac 12}$ for
$x>0$ close to $0$,  the blow up point,
where $\alpha \in (1,+\infty)$ is explicitly related to $\nu$.
The existence of finite point blow up in $H^1$ with rates in $\big(\frac 13, \frac 37 \big)\setminus \{\frac25\}$ remains an open question.

\subsection{Main result}
We present the main theorem on the existence of the slow infinite point blow up.

\begin{theorem}\label{Theorem_principal_result}
For any $\delta > 0$ there exists $0<T<+\infty$ and $U_0 \in H^1(\RR)$ with $\|U_0 - Q\|_{H^1} \leq \delta$ such that the solution $U$ of the \eqref{gKdV_principal_eq} evolving from $U_0$ exists on $[0,T)$ and blows-up at time $T$ with
\begin{equation}
    \|\partial_x U(t)\|_{L^2} \sim C (T-t)^{-\frac 12} \qquad \text{as} \quad t \uparrow T
\end{equation}
where $C>0$ is a constant.
Moreover, the travel parameter $\sigma(t)$ defined in \eqref{parametres}
for this solution satisfies 
\begin{equation}
        \sigma(t) \sim \frac 12 \ln\Big( \frac{1}{T-t}\Big) \quad \text{as}\quad t \uparrow T.
\end{equation}
\end{theorem}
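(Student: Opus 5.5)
We follow the strategy of \cite{MMPIII} and \cite{1M}, replacing the polynomial tail by an exponentially decaying one.

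\textbf{Step 1: the profile and the formal law.} We look for $U(t,x)=\lambda^{-\frac12}(t)\big(Q_{b(t)}+\vare\big)\big(t,\lambda^{-1}(t)(x-\sigma(t))\big)$, where $Q_b=Q+bP_b$ is the approximate profile of \cite{MMPI} and $\vare$ satisfies the usual orthogonality conditions. Under the rescaled time $ds/dt=\lambda^{-3}$, the formal modulation system is
\[
\frac{\lambda_s}{\lambda}+b\approx 0,\qquad \frac{\sigma_s}{\lambda}\approx 1,\qquad b_s+2b^2\approx \text{forcing from the tail}.
\]
A tail $x^{-\theta}$ on the right gives $b\approx c\lambda^{\gamma}$ with $\gamma$ related to $\theta$, hence $\lambda\sim (T-t)^{\nu}$. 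The threshold $\nu=\frac12$ corresponds to $\gamma=0$ in the limit, so we choose an initial tail $U_0(x)\approx Q+ \epsilon\, e^{-\kappa x}$ for $x\gg1$ (suitably cut off), with $\kappa$ and the amplitude small so that $\|U_0-Q\|_{H^1}\le\delta$.

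We then compute how the Airy flow transports this tail to the bubble position. The bubble meets the tail at $\sigma(t)$, which yields $b(s)\approx c\,\lambda^{2}$ up to a factor $e^{-\kappa\sigma}$. Closing the system gives $\lambda\sim C(T-t)^{\frac12}$ and $\sigma_t=\lambda^{-2}\sim \frac{1}{2(T-t)}$, hence $\sigma\sim\frac12\ln\frac1{T-t}$. At the same time $\lambda\|\partial_x U\|_{L^2}\to\|Q'\|_{L^2}$ yields the stated rate. I would first derive this ODE system with explicit remainders and then choose $\kappa$ so that the exponents are consistent.

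\textbf{Step 2: bootstrap.} We define a backward/forward bootstrap on $[0,T)$ with the bounds
\[
|b-b_{\rm app}|\ll b_{\rm app},\qquad |\lambda/\lambda_{\rm app}-1|\ll1,\qquad \|\vare\|_{H^1_{\rm loc}}^2\lesssim b^{2+},
\]
together with a weighted $L^2$ bound on the right-hand side. The modulation equations come from the orthogonality conditions and are standard. The tail's contribution to $b_s$ is estimated by splitting $x$ into a region near the soliton, where the tail acts as a slowly varying forcing, and a far right region, where the exponential decay is used.

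\textbf{Step 3: energy–virial estimate (main obstacle).} The mixed functional $\mathcal F=\int\big[(\partial_y\vare)^2\psi+\vare^2\varphi-\tfrac13((Q_b+\vare)^6-Q_b^6-6Q_b^5\vare)\psi\big]$ of \cite{MMPII} must satisfy $\frac{d}{ds}\frac{\mathcal F}{\lambda^{2j}}\le$ controllable terms. In \cite{1M} the margin $\nu-\frac12>0$ absorbed a scaling term $\frac{\lambda_s}{\lambda}\int y\,\partial_y\vare\,\varepsilon\,\varphi$ coming from the right side. At $\nu=\frac12$ this margin vanishes, so I would modify the weight on the right: replace $\varphi$ there by an exponential weight $e^{\kappa' y}$ with $\kappa'<\kappa$, and add a correction term proportional to $\frac{\lambda_s}{\lambda}$ times a localized $\int y\,\vare^2$-type quantity. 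Its time derivative then produces a sign-definite term that dominates the bad scaling term, with the Kato monotonicity providing the remaining coercivity. Checking that all commutator errors involving the exponential weight stay below $b^{3}$ is where I expect the real difficulty.

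\textbf{Step 4: conclusion.} A topological (continuity) argument on the one unstable direction of $b$ closes the bootstrap. The closed bootstrap yields blow up at finite $T$, the rate $\lambda\sim C(T-t)^{\frac12}$, and the asymptotics for $\sigma$ obtained by integrating $\sigma_t=\lambda^{-2}(1+o(1))$.
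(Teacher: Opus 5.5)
Step 1 contains errors that matter for the statement. First, the tail cannot be left inside $\vare$. At the soliton it has size $r=\lambda^{1/2}f(\tau,\sigma)\sim b$, so your bound $\|\vare\|^2_{H^1_{loc}}\lesssim b^{2+}$ fails. Even with the tail subtracted, the error would contain $5r\,\partial_y(Q^4)$, of order $b$ rather than $b^2$. The paper therefore writes $U=\lambda^{-1/2}(Q_b+rR+\vare)+f$, where $f$ is the gKdV evolution of the tail and $\mathcal{L}R=5Q^4$, so that the interaction error becomes $5\partial_y(Q^4(F-r))=O(s^{-2})$.

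Second, your law for $b$ is wrong. For a tail $c_0e^{-\kappa(x-\kappa^2t)}$ with $c_0<0$, projecting on $Q$ gives the first integral $\frac{b}{\lambda^2}+\frac{4c_0}{\int Q}\lambda^{-3/2}e^{-\kappa\hatsig}\approx 0$. So $b\approx C\lambda^{1/2}e^{-\kappa\hatsig}$, not $c\lambda^2e^{-\kappa\sigma}$. Using $\frac{\lambda_s}{\lambda}=-b$ and $\hatsig_s=\lambda$ then gives a second integral $\lambda^{1/2}+\frac{2c_0}{\kappa\int Q}e^{-\kappa\hatsig}\approx 0$. This second quantity ($h$ in the paper) is the unstable direction, fixed by shooting in $\lambda_0$; the law of $b$ closes directly. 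One obtains $b\approx s^{-1}$ and $\lambda\approx(2\kappa s)^{-1}$, so $\nu=\frac12$ means $b\approx\lambda$, not a vanishing exponent. It also follows that $\lambda^2\approx 4\kappa(T-t)$ and $\sigma\sim\frac{1}{4\kappa}\ln\frac{1}{T-t}$. Hence your $\sigma_t\sim\frac{1}{2(T-t)}$ does not follow for a slowly decaying tail. The coefficient $\frac12$ comes from the rate $\frac12$ of the paper's tail $c_0e^{-\frac12(x-t/4)}$, $c_0=-\frac14\int Q$, in the frame $\lambda_0\approx s_0^{-1}$. In that frame the tail is made small by the cut-off at $x_0\gg1$, not by the rate, and $U_0$ is close to $Q$ only up to scaling. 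Rescaling to $\lambda_0=1$ turns the rate into $\frac{\lambda_0}{2}$ and the coefficient into $\frac{1}{2\lambda_0}$.

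\emph{The decisive gap is Step 3.} The monotonicity formula for $s^j\mathcal{F}$, with $j>2$ as the parameter estimates require, leaves the positive term $8s^{j-1}\int\vphi_B\vare^2$. This comes from the time weight and from $\frac{\lambda_s}{\lambda}\Lambda\vare$, with $\vphi_B\simeq y/B$ on the right. At $\nu=\frac12$ nothing in $\mathcal{F}$ absorbs it where $y\gtrsim s$. Your remedy cannot work, for a structural reason. For a weight $\omega(y)$ of fixed shape, the scaling term contributes $-\frac{\lambda_s}{\lambda}\int y\,\omega'\vare^2\approx b\int y\,\omega'\vare^2$ to $\frac{d}{ds}\int\omega\vare^2$. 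The Airy transport gains only $\int\omega'(\vare^2+3(\partial_y\vare)^2)$, with no factor $y$. So you lose exactly where $y\gtrsim b^{-1}\sim s$, and $\omega=e^{\kappa' y}$ is no exception. Moreover, $\int e^{\kappa'y}\vare^2=+\infty$ if the tail, which decays only like $e^{-\kappa\lambda y}$ in $y$, is part of $\vare$. A correction $\frac{\lambda_s}{\lambda}\int y\vare^2$ does not help either. By Kato smoothing its derivative is $+b\int(\vare^2+3(\partial_y\vare)^2)$ up to $O(b^2)\int y\vare^2$, which has the wrong sign and lacks the factor $y$.

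The missing idea is a weight that is a function of $\lambda y=x-\sigma$, so that its $s$-derivative cancels the scaling contribution exactly. The paper uses $\mathcal{H}=s^j\mathcal{F}+\mathcal{J}$ with $\mathcal{J}=\int e^{2\kappa\lambda y+\kappa\hatsig}\Upsilon(\lambda y)\vare^2$ and $\kappa=4>j+1$. Transport gives $-2\kappa\lambda\mathcal{J}$, while the growing factor $e^{\kappa\hatsig}\sim s^{\kappa}$ costs only $+\kappa\lambda\mathcal{J}$. The cut-off to $y\gtrsim(2\theta\lambda)^{-1}$ makes the interactions with $Q_b$, $\mathcal{R}$ and $\vec{m}\cdot\vec{M}Q$ of size $s^{-100}$, so no $b^3$ bookkeeping is needed. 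The bad term is then split at $y=(\theta\lambda)^{-1}$. On the left, $\vphi_B\le(\theta\lambda)^{-1}\vphi_B'$ lets the virial dissipation $\frac{\mu}{8}s^j\int\vphi_B'\vare^2$ absorb it once $\theta\gg\mu^{-1}$. On the right, $y\lesssim\lambda^{-1}e^{2\kappa\lambda y}$ bounds it by $s^{j+1-\kappa}\,\kappa\lambda\mathcal{J}$, which is absorbed since $\kappa>j+1$.
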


\vspace{0.2cm}
\begin{remark}
We comment on further possible directions of investigation.
Together with \cite{MMPIII} and \cite{1M} the above result concludes the study of the infinite point blow up 
of the form \eqref{rate} for the mass critical \eqref{gKdV_principal_eq}. 
The study of non-polynomial blow up rates, such as oscillations between two polynomial rates
(see \cite{Donninger-Huang-Krieger-Schlag-2014})
or a given polynomial rate perturbed by a logarithm, is a possibility.
More generally, an extensive investigation on the correlation between a given decaying tail and the blow up behavior of a solution would be interesting
(see \cite{Gustafson-Nakanishi-Tsai-asympt-2008} for a related results).
\end{remark}

\subsection{Notation}

For $1\leq p\leq +\infty$ we denote the classical real-valued Lebesgue space by $L^p(\RR)$. We denote the $L^2$-scalar product by
\begin{equation}
    (f,g) = \int_{\RR} f(y)\,g(y)\, dy, \qquad\text{for}\quad f,g \in L^2(\RR).
\end{equation}

We introduce the generator of scaling symmetry in the space $L^2$
\begin{equation}\label{def_of_scal_sym}
    \Lambda f = \frac{1}{2}f + yf'
\end{equation}
and the linearised operator $\mathcal{L}$ around the ground state
\begin{equation}\label{def_of_lin_operator}
    \mathcal{L}f = -f''+f-5Q^4 f.
\end{equation}

We introduce the weighted $L^2$ norms 
\begin{equation}\label{def_of_L2loc_L2B}
    \|f\|_{L^2_{sol}} = \Big( \int_{\RR} f^2(y) e^{-\frac{|y|}{10}}dy\Big)^{\frac{1}{2}} \qquad \text{and} \qquad \|f\|_{L^2_B}= \Big( \int_{\RR} f^2(y) e^{\frac{y}{B}} dy \Big)^{\frac{1}{2}}.
\end{equation}
where $B>100$ is to be fixed later.

For simplicity of notation, $dy$ is omitted inside of the integration and the integration domain is $\RR$ unless otherwise stated.

The inequality $f\leq C g$ with some time-space independent constant $C$ is abbreviated as $f\lesssim g$. In general terms, the constants which value is not crucial for the proof are denoted $C$ and their value is time-space independent.

For a given small positive constant $0<\alpha^* \ll 1$, we denote $\delta$ a small parameter depending on $\alpha^*$ such that
$ \delta(\alpha^*) \to 0$ as $\alpha^* \to 0$.

For some given interval $I \in \RR$, we denote $\mathbf{1}_{I}$ the characteristic function of this interval.

We use the abbreviation r.h.s for right-hand side.

\subsection{Strategy of the proof}\label{S:strategy}
The proof is divided into four main steps detailed in Sections \ref{S:2} to \ref{section_construction}.
\begin{enumerate}
    \item \textit{Definition of the tail}\\
    Given $c_0 < 0$, we define 
    \begin{equation}
        f_0(t,x) = c_0\, e^{-\frac 12 (x - \frac t4)} \qquad \text{for} \quad x -\frac t4 \gg 1.
    \end{equation}
    The time parameter is introduced to ensure that the decaying tail $f_0$ is a solution to the linear Airy equation for $x-\frac t4$ large. The solution $f$ of \eqref{gKdV_principal_eq} evolving from $f_0(t=0)$ is such that for all $k \in \mathbb{N} \cup\{0\}$, it holds
    \begin{equation}
        \big|\partial^k_x f(t,x) - \partial^k_x f_0(t,x)\big| \lesssim e^{-2x}
        \qquad \text{for} \quad x \gg t.
    \end{equation}
    In particular, $f_0$ is a good approximation of the solution for $x$ large.
    Note that, in contrast with the case of a polynomial tail, there is no gain
    when taking spatial derivatives.
    
   Another difference with the case of a polynomial tail
   is that the time dependence of the tail will also appear within the modulation system of the blow up parameters.
    
    \item \textit{Modulation around a refined profile and system of parameters}\\
    We look for a solution $U$ of \eqref{gKdV_principal_eq} of the following form
    \begin{equation}\label{intro_strat_proof_decomp_of_U}
        U(t,x) =\lambda^{-\frac 12}(s)\big( Q_{b(s)}(y) + f(t(s),\sigma(s))R(y) + \vare(s,y)\big) + f(t,x)
    \end{equation}
 where $(s,y)$ are the rescaled time and space variables defined by
    \begin{equation}
        \frac{ds}{dt} = \frac{1}{\lambda^3},\quad y = \frac{x-\sigma(s)}{\lambda(s)},
    \end{equation}
 the function $\varepsilon(s,y)$ is a small function to be controlled and
 $\lambda$, $\sigma$ and $b$ are time-dependent functions to be fixed.
   Note that as $t\in [0,T)$, $s$ is defined on a certain interval $[s_0,+\infty)$. 
   We define $\tau(s)$ as the inverse of the function $s(t)$.
    
    To account for the time dependence of the decaying tail, the law of the position parameter $\sigma$ also depends on the time variable $\tau(s)$.
    To simplify the notation, we introduce a new parameter
    \begin{equation}
        \hatsig(s) = \sigma(s) - \frac{\tau(s)}{4}.
    \end{equation}
    This parameter appears explicitly in the modulation system of the blow up law
    \begin{equation}\label{formal_dynamical_system}
        \frac{\lambda_s}{\lambda}+b = 0, \qquad \hatsig_s = \lambda, \qquad \frac{d}{ds}\Big( \frac{b}{\lambda^2} + \frac{4}{\int Q}\,c_0 \,\lambda^{-\frac{3}{2}}e^{-\frac12 \hatsig} \Big) = 0.
    \end{equation}
    Integrating the last equation on $[s_0,s]$, we obtain 
    \begin{equation}\label{equation_with_l0}
        \frac{b(s)}{\lambda^2(s)} + \frac{4\,c_0}{\int Q}\,\lambda^{-\frac 32}(s)\,e^{-\frac12 \hatsig(s)} = l_0 .
    \end{equation}
    For now on, we fix $l_0 = 0$ to simplify.
    Replacing $b$ by $-\frac{\lambda_s}{\lambda}$ and then $\lambda$ by $\hatsig_s$ (from \eqref{formal_dynamical_system}) yields
    \begin{equation}
        \lambda^{-\frac 12}\lambda_s  = \frac{4\,c_0}{\int Q}\, \lambda\,e^{-\frac12 \hatsig} = \frac{4\,c_0}{\int Q}\,\hatsig_s\,e^{-\frac12 \hatsig}.
    \end{equation}
    We look for $\lambda(s)\to 0^+$ as $s\to +\infty$ and thus $\lambda_s\leq 0$. Therefore the first equality leads us to choose $c_0<0$.\\
    After integration, we get 
    \begin{equation}\label{l1}
        \lambda^{\frac 12}(s) + \frac{4}{\int Q}c_0\,e^{-\frac12 \hatsig(s)} = l_1.
    \end{equation}
   Since $\lambda(s)\to 0$ and $\sigma(s)\to +\infty$ as $s\to +\infty$, one has to impose $l_1=0$.
   In the setting of the finite time blow up, $\tau(s)$ is bounded, therefore $\hatsig \to +\infty$ when $\sigma(s)\to +\infty$.
    Combining with the second equation in \eqref{formal_dynamical_system}, and fixing the initial value to be $e^{\hatsig(s_0)} = (4c_0/\int Q)^2 s_0$, we get 
    \begin{equation}
        \hatsig_s = \Big( \frac{4}{\int Q}c_0\Big)^2 e^{-\hatsig} \quad \iff \quad e^{\hatsig} = \Big( \frac{4}{\int Q} c_0\Big)^2\,s
    \end{equation}
    Inserting the expression of $\hatsig$ in the relations  \eqref{l1} and the equation of $b$ in \eqref{formal_dynamical_system} and fixing
    \begin{equation}\label{def_of_c_0}
        c_0 = -\frac{\int Q}{4},
    \end{equation}
    we get
    \begin{equation}\label{sketch_proof_formal_law_of_param}
        \lambda(s) = s^{-1},\qquad e^{\hatsig(s)} = s, \qquad b(s)= s^{-1}.
    \end{equation}

    \item \textit{Energy estimates}\\
    The decomposition in \eqref{intro_strat_proof_decomp_of_U}  yield the following behaviour of the correcting term 
    \begin{equation}
        \vare_s \sim \big(-\varepsilon_{yy} + \varepsilon- (\varepsilon^5 + 5(W+F)^4 \varepsilon) \big)_y + \frac{\lambda_s}{\lambda}\Lambda \varepsilon + \Big(\frac{\sigma_s}{\lambda}-1 \Big)\varepsilon_y.
    \end{equation}
    As in the references \cite{1M} and \cite{MMPIII}, we introduce a mixed energy-virial functional $\mathcal{F}$ and use its coercivity (up to imposing suitable orthogonality conditions on $\varepsilon$) to control the weighted $H^1$ norm of the solution. 
    
    In \cite{1M} the author introduced a new energy-virial functional $\mathcal{H}$ with a term at the scaling level  to compensate the \textit{bad sign} term arising from $-\frac{\lambda_s}{\lambda}\Lambda \vare$, where $\frac{\lambda_s}{\lambda}\leq 0$ in the considered regime. 
   (This allowed to replace and improve a two-step argument in \cite{MMPI} and \cite{MMPIII}.)
    The decaying tail was considered with a polynomial decay on the right of the form $x^{-\theta}$, where $\theta \uparrow +\infty$ as $\nu \downarrow \frac 12$. Due to this decay, the scaling term was taken as $\int_{y>0} (\lambda y)^k \vare^2$ with $k$ depending on $\theta$ and therefore on $\nu$ and $k \uparrow +\infty$ as $\nu \downarrow \frac 12$.
    
    In the present paper, we consider a corresponding term
    in the energy functional of the form
    \begin{equation}
        s^{\kappa}\int_{y>(\theta \lambda)^{-1}} e^{2\kappa \lambda y} \,\vare^2
    \end{equation}
    where $\kappa$ and $\theta$ are chosen to be large enough in order to estimate the \textit{bad sign} term and ensure the control of $\mathcal{H}$ through time.
    
    \item \textit{Construction of blowing up solutions}\\
    By a bootstrap argument,  we then construct a solution existing on the time interval $[s_0,+\infty)$, which corresponds to $[0,T)$ in the time variable $t$. The law of the rescaled variable yields
    \begin{equation}
        T-t = \int^{+\infty}_{s(t)} \frac{d\tau(s')}{ds'}\, ds' = \int^{+\infty}_{s(t)}\lambda^{3}(s')\,ds' \sim \frac 12 s^{-2}.
    \end{equation}
    The parameter $\lambda^{-1}$ represents the behaviour of the $H^1$ norm in the finite time blow up regime. We consider the passage back into the variable $(t,x)$, we write
    \begin{equation}
        \|\partial_x U(t)\|_{L^2} \sim \|Q'\|_{L^2}\lambda^{-1}(s(t)) \sim C (T-t)^{-\frac 12} \quad \text{as }\quad t \uparrow T,
    \end{equation}
    which indicates the desired behaviour of the bubbling. Furthermore, the travel parameter $\sigma$ satisfies the following behaviour when $t\uparrow T$
    \begin{equation}
        \sigma(t) \sim \frac 12 \ln\Big( \frac{1}{T-t}\Big).
    \end{equation}
    Therefore, we call $U$ a slow infinite point blow up solution.
    
\end{enumerate}

\vspace{0.4cm}

%% file: Decaying_tail.tex
\section{Decaying tail on the right in space}\label{S:2}

Let $\Theta:\RR\to[0,1]$ be a smooth, non-decreasing function, such that
$\Theta|_{(-\infty, \frac{1}{4})} \equiv 0$ and $\Theta|_{(\frac{1}{2},+\infty)} \equiv 1$.
For $c_0<0$ fixed in \eqref{def_of_c_0} and $x_0 > 1$, we consider
\begin{equation}
    f_0(t,x) = c_0\, e^{-\frac 12 \big(x -\frac t4\big)} \Theta \Big(\frac{1}{x_0}\Big( x -\frac t4\Big)\Big) \qquad \text{for}\quad (x,t)\in \RR\times [0,+\infty).
\end{equation}

Note that, for all $k\in \NN\cup\{0\}$ and all $t \geq0$, it holds
\begin{equation}\label{estimation_of_norms_of_f_0}
    \|f_0(t)\|_{L^{\infty}_x} + \|\partial^{k}_x f_0(t)\|_{L^{\infty}_x}\lesssim e^{-\frac{x_0}{8}} \qquad \text{and}\qquad \|\partial^{k}_x f_0(t)\|_{L^2_x}\lesssim e^{-\frac{x_0}{8}}.
\end{equation}
 
Let $f$  be the solution of
\begin{equation}\label{equation_of_f}
    \begin{cases}
        \partial_t f + \partial_x(\partial_{xx}f+ f^5) = 0,\quad (x,t)\in \RR\times [0,+\infty), \\
        f(0,x) = f_0(0,x).
    \end{cases}
\end{equation}
 
 We announce the property of the persisting exponential tail in a time-space region on the right. 
 \begin{lemma}\label{lemma_on_the_decaying_tail}
    For $x_0>1$ large enough, the solution $f$ of \eqref{equation_of_f} is global, satisfies $f\in C(\RR,H^k(\RR))$ for any $k\geq 0$ and $\|f\|_{L^{\infty}_{t}H^k_x} \lesssim \delta(x_0^{-1})$.
    In addition, for all $t \geq 0$ and all $x$ verifying
    \begin{equation}\label{dec_t_condition_on_x}
        \frac{3}{4} x > 110t ,
    \end{equation}
    it holds
    \begin{equation}\label{lemma_decaying_tails_estim_deriv_on_x_order_k}
        \forall k \in \NN\cup\{0\}, \qquad \big| \partial^k_x f(t,x) - \partial^k_x f_0(t,x) \big| \lesssim \delta(x^{-1}_0)\, e^{-2x},
    \end{equation}
    \begin{equation}\label{lemma_decaying_tails_estim_deriv_on_t}
        \lvert \partial_t f(t,x) \rvert \lesssim e^{-\frac 12 x} .
    \end{equation}
\end{lemma}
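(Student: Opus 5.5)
The plan is to treat the two parts of the statement separately: first global existence with uniform smallness, then the persistence of the tail, which I will obtain by writing $f=f_0+g$ and running exponentially weighted Kato-type energy estimates on $g$ in a frame moving to the right.

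\emph{Global existence and smallness.} By \eqref{estimation_of_norms_of_f_0}, $\|f_0(0)\|_{H^k}\lesssim e^{-x_0/8}$ for every $k$. For $x_0$ large, $\|f_0(0)\|_{L^2}<\|Q\|_{L^2}$, so the Gagliardo--Nirenberg inequality \eqref{gagliardo-nirenberg}, together with conservation of mass and energy, gives global existence and $\|f\|_{L^\infty_t H^1}\lesssim e^{-x_0/8}$. For higher regularity I would invoke the small-data theory of \cite{KPV93}: for data small in $H^k$, the solution is global and its $H^k$ norm stays $\lesssim\|f(0)\|_{H^k}$ uniformly in time, through the Strichartz and local-smoothing norms. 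This gives $f\in C(\RR,H^k)$ and $\|f\|_{L^\infty_tH^k_x}\lesssim\delta(x_0^{-1})$.

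\emph{Equation for $g$.} A direct computation shows that $e^{-\frac12(x-\frac t4)}$ solves the Airy equation exactly: $\partial_t$ and $\partial_x^3$ each produce the factor $\pm\frac18$. Hence $h:=-(\partial_tf_0+\partial_x^3f_0)$ is supported in the strip $\frac{x_0}4\le x-\frac t4\le\frac{x_0}2$ and has size $\lesssim e^{-x_0/8}/x_0$. The difference $g=f-f_0$ then satisfies
\[
\partial_tg+\partial_x^3g+\partial_x(f^5)=h,\qquad g(0)=0 .
\]

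\emph{Weighted estimate.} For a weight $w(t,x)=\phi(x-vt)$, the Airy identity reads
\[
\frac{d}{dt}\int g^2w=-3\int g_x^2w'+\int g^2(w'''-vw')+2\int hgw+(\text{nonlinear}).
\]
For the nonlinear term, I use $f^5=(f_0+g)^5$ and bound $\int \partial_x(f^5)gw$ by integration by parts and $\|f\|_{L^\infty}\lesssim\delta$. The pure $f_0^5$ contribution is a source of size $e^{-\frac52(x-t/4)}$, which is acceptable because it decays faster than the $e^{-2x}$ target once $x\gtrsim t$. I take $\phi(z)\approx e^{4z}$ for large $z$, so that the $L^2$ control of $g$ translates into an $e^{-2x}$ bound, and then $w'''\approx64w'$, so that $v\gtrsim 16$ makes the linear terms nonpositive. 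The condition $\frac34x>110t$ should come from choosing $v$ of order $100$ to absorb cutoff errors in $\phi$. To get pointwise bounds on $\partial_x^kg$, I repeat the estimate for $\partial_x^kg$: differentiate the equation, use the uniform $H^k$ bounds for the commutator and nonlinear terms, and then apply the weighted Sobolev inequality $|g|^2w\lesssim\int(g^2+g_x^2)w$. Finally, \eqref{lemma_decaying_tails_estim_deriv_on_t} follows from $\partial_tf=-\partial_x(f_{xx}+f^5)$ together with \eqref{lemma_decaying_tails_estim_deriv_on_x_order_k} and the explicit bound $|\partial_t f_0|\lesssim e^{-\frac12(x-\frac t4)}$. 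Since $x\gtrsim t$ in the region, this is $\lesssim e^{-x/2}$ up to adjusting constants; this last point needs checking.

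\emph{Main obstacle.} The delicate step is the source term $\int hgw$. The strip where $h$ lives reaches $x\sim\frac{x_0}2$ at small times, where $e^{4x}$ is huge compared to the size $e^{-x_0/8}$ of $h$. My first attempt is to use a weight that is exponential only to the right of a moving point: set $\phi(z)=e^{4(z-z_*)}$ for $z>z_*$ and make it flat or small to the left, with $z_*$ chosen so that the whole support of $h$ stays in the region where $w\lesssim e^{-x_0}$ for all $t\ge0$. This is possible because $v\gg\frac14$. The price is an $x_0$-dependent constant in the final bound, which I would absorb by exploiting that $g(0)=0$ and that $h$ is small. If this fails, the fallback is to use a family of weights indexed by the target point $x$ and to apply the estimate separately for each.
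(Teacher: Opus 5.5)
Your route is the one the paper takes: $g=f-f_0$ with $g(0)=0$ and source $F_0=-\partial_x^3f_0-\partial_x(f_0^5)-\partial_tf_0$, a Kato identity for $M_k(t)=\int(\partial_x^kg)^2e^{z}$ with a right-moving exponential weight, induction on $k$, the weighted Sobolev bound $|\partial_x^kg|^2e^{z}\lesssim M_k+M_{k+1}$, and $\partial_tf$ read off from the equation. In the induction, the lower-order weighted terms produced by the Leibniz rule are controlled by the time-integrated dissipation of the previous steps; this is why \eqref{HR_on_k_dec_tails} carries those integrals, and the uniform $H^k$ bounds only supply the $L^\infty$ factors.

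However, the step that is supposed to produce $e^{-2x}$ does not work as you set it up. With $w=e^{4(x-vt)}$, the weighted Sobolev inequality gives only $|g|\lesssim e^{-2(x-vt)}$. A factor growing in $t$ cannot be absorbed in a cone $x\gtrsim t$ unless the spatial exponent is strictly larger than $4$. The paper takes $z=\alpha x-\beta t$ with $\alpha=\frac{19}{4}$ and $\beta=110>\alpha^3$, which is your condition $v>\alpha^2$; note also that for $\phi=e^{4z}$ one has $\phi'''=16\phi'$, not $64\phi'$. The condition $\frac34x>110t$ is exactly $\alpha x-\beta t>4x$: the surplus $(\alpha-4)x$ absorbs $\beta t$, and it has nothing to do with cutoff errors. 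The upper bound $\alpha<5$ is what keeps $\partial_x(f_0^5)\sim e^{-\frac52(x-\frac t4)}$ square integrable against $e^{z}$.

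The obstacle you single out is genuine, your proposed fix cannot work, and the paper does not overcome it either. With the pure weight the paper gets $\int F_0^2e^{z}\lesssim e^{-(\beta-\frac{\alpha}{4})t+\frac{\alpha-1}{2}x_0}$; the factor $e^{-\beta t}$ makes the time integral finite. Hence $M_k\lesssim e^{\frac{15}{8}x_0}$ and $|\partial_x^kg|^2\lesssim e^{-\frac{19}{4}x+110t+\frac{15}{8}x_0}$. Under $\frac34x>110t$ alone this is $e^{-4x+\frac{15}{8}x_0}$, which carries a large constant, and the paper's last step to $e^{-\frac{x_0}{8}}e^{-4x}$ does not follow.

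No use of $g(0)=0$ or of the smallness of $h$ can repair this. One has $\partial_tg(0,x)=F_0(0,x)\approx\frac{|c_0|}{2x_0}\Theta'(\frac{x}{x_0})e^{-\frac x2}$ on the strip $\frac{x_0}{4}<x<\frac{x_0}{2}$. So for suitable small $t>0$, $|g|\approx t|F_0(0,x)|\gg e^{-2x}$ there, even though $\frac34x>110t$ holds. A bound with a constant independent of $x_0$, let alone one with a factor $\delta(x_0^{-1})$, is therefore false near the strip. Your shifted weight $e^{\alpha(x-vt-z_*)}$ only multiplies the functional by the constant $e^{-\alpha z_*}$, so it yields the same restriction to $x\gtrsim x_0$.

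What the argument actually proves is one of two statements:
\begin{itemize}
\item the bound $|\partial_x^kg|\lesssim e^{\frac{15}{16}x_0}e^{-2x}$ in the region $\frac34x>110t$; or
\item the bound $|\partial_x^kg|\lesssim e^{-\frac{3}{16}x_0}e^{-2x}$ in the region $\frac34(x-3x_0)>110t$.
\end{itemize}
The second region is exactly the hypothesis \eqref{assumption_on_sigma_on_t} under which the lemma is later applied, so you should state and prove the estimate in that form.

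Your doubt about \eqref{lemma_decaying_tails_estim_deriv_on_t} is also justified. Where $\Theta\equiv1$, one has $|\partial_x^3f_0|=\frac{|c_0|}{8}e^{-\frac x2}e^{\frac t8}$, so the bound $e^{-\frac x2}$ holds only for bounded $t$. That suffices in the application, since $\tau(s)\in[0,1]$, but not for all $t\geq0$ in the stated region.
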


 \begin{proof}
 The first statement is a consequence of the well-posedness of the Cauchy problem, see
 \cite[Theorem 2.8]{KPV93} and \cite[Corollary 2.9]{KPV93}.
 
 Now, we define
     \begin{equation}
         g(t,x) = f(t,x) - f_0(t,x),
     \end{equation}
    which satisfies the equation
    \begin{equation}\label{equation_of_g_proof_dec_tails}
        \begin{cases}
            \partial_t g + \partial_x \big( \partial^2_x g + (g+f_0)^5 - f^5_0 \big) = F_0,\\
            g(0,x) = 0,
        \end{cases}
    \end{equation}
    where
    \begin{equation}
        F_0 = - \partial^3_x f_0 - \partial_x(f^5_0)-\partial_t f_0.
    \end{equation}
    For all $k \in \NN\cup\{0\}$, $F_0$ satisfies for $(x,t)\in \RR\times[0,+\infty)$,
    \begin{equation}\label{estimation_on_F_0_all_deriv}
        |\partial^{k}_x F_0 (t,x)| \lesssim e^{-\frac 12 \big( x -\frac t4\big)}\mathbf{1}_{[\frac{x_0}{4},\frac{x_0}{2}]}\big( x -\frac t4 \big) + e^{-\frac 52 \big( x -\frac t4\big)}\mathbf{1}_{[\frac{x_0}{2},+\infty)}\big(x-\frac t4\big).
    \end{equation}
    
    Using $\|f\|_{L^{\infty}_t H^k_x} \lesssim \delta(x^{-1}_0)$ and \eqref{estimation_of_norms_of_f_0}, observe that, for all $k \in \NN \cup\{0\}$, it holds
    \begin{equation}\label{proof_dec_t_bound_norms_of_g}
        \|\partial^{k}_x g\|_{L^{\infty}_tH^k_x} \lesssim \delta(x^{-1}_0).
    \end{equation}
    
    \vspace{0.4cm}
    Let $k\in \NN \cup \{0\}$ and define for $t\geq 0$,
    \begin{equation}
        M_k(t) = \int ( \partial^k_x g )^2(t,x)\,e^{z(t,x)}\,dx\,\quad \text{with}\quad z(t,x) = \alpha x - \beta t ,
    \end{equation}
    where the parameters $\alpha$, $\beta$ are fixed as follows
    \begin{equation}\label{proof_dec_t_parameters_alpha_etc}
        \alpha = \frac{19}{4}, \qquad \beta = 110.
    \end{equation}
    We want to prove the following property of $M_k(t)$, for all $t\geq0$,
    \begin{equation}\tag{\text{HR}}\label{HR_on_k_dec_tails}
        M_{k}(t) + \frac 12 (\beta - \alpha^3)\int^t_0 \int (\partial^{k}_x g)^2 e^z  + \frac32\alpha \int^t_0 \int (\partial^{k+1}_x g)^2 e^{z}
        \lesssim  e^{\frac{\alpha-1}{2}x_0}.
    \end{equation}
    
    We will proceed by induction on $k$. 
    
    \vspace{0.4cm}
    First, we prove this property holds for $k = 0$. We compute
    \begin{equation}
    \begin{split}
        \frac{d}{dt}M_0
        &= 2\int g\, \partial_t g \, e^z + \int g^2 \, e^z \, \partial_t z\\ 
        &= 2\int g\, F_0\, e^z - 2\int g\,\partial_x \big[ \partial^2_x g + (g+f_0)^5 -f^5_0 \big]\, e^z -\beta \int g^2\,e^z.
    \end{split}
    \end{equation}
    The second term is rewritten by integration by parts
    \begin{equation}
    \begin{split}
        &- \int g\,\partial_x \big[ \partial^2_x g + (g+f_0)^5 -f^5_0 \big]\\
        &= \int (\partial_x g)(\partial^2_x g)e^z + \int (\partial_x g)\big((g+f_0)^5 -f^5_0 \big) e^z + \alpha \int g(\partial^2_x g)e^z + \alpha\int g \big( (g+f_0)^5-f^5_0\big)e^z\\
        &= -\frac{3\alpha}{2}\int (\partial_x g)^2 e^z  
         + \frac{\alpha^3}{2}\int g^2 e^z +\int (\partial_x g)\big((g+f_0)^5 -f^5_0 \big) e^z + \alpha\int g \big( (g+f_0)^5-f^5_0\big)e^z.
    \end{split}
    \end{equation}
    We treat the last two terms using the following identity
    \begin{equation}
        \frac 16 \partial_x \big[ (g+f_0)^6-f^6_0 - 6g f^5_0 \big] = (\partial_x f_0) \big[ (g+f_0)^5 -f^5_0 -5g f^4_0 \big] + (\partial_x g)\big( (g+f_0)^5 -f^5_0 \big).
    \end{equation}
    We have 
    \begin{equation}
        \begin{split}
            &\int (\partial_x g)\big((g+f_0)^5 -f^5_0 \big) e^z + \alpha\int g \big( (g+f_0)^5-f^5_0\big)e^z \\
            &= \frac 16 \int \partial_x \big[ (g+f_0)^6 -f^6_0 -6g f^5_0 \big]e^z + \alpha \int g \big( (g +f_0)^5- f^5_0\big)e^z - \int (\partial_x f_0)\big[ (g+f_0)^5 -f^5_0 -5g f^4_0\big]e^z\\
            &= - \frac 16 \alpha \int \big( (g+f_0)^6-f^6_0\big) e^z + \alpha \int g (g+f_0)^5 e^z - \int (\partial_x f_0)\big[ (g+f_0)^5 -f^5_0 -5g f^4_0\big]e^z.
        \end{split}
    \end{equation}
    Thus (note $\beta-\alpha^3>0$)
    \begin{equation}
    \begin{split}
        &\frac{d}{dt}M_0(t) + 3\alpha \int (\partial_x g)^2 e^z + (\beta-\alpha^3)\int g^2 e^z\\
        &=2\int g F_0 e^z  - \alpha \int \Big( \frac13 (g+f_0)^6-\frac 13 f^6_0 - 2g(g+f_0)^5\Big)e^z - 2 \int (\partial_x f_0)\big[ (g+f_0)^5 -f^5_0 -5g f^4_0\big]e^z.
    \end{split}
    \end{equation}
    The following Sobolev estimate holds true for $t\geq 0$,
    \begin{equation}\label{proof_dec_t_Sobolev1}
        \| g^4 e^{z}\|_{L^{\infty}_x} \lesssim \|g\|^2_{L^{\infty}_t L^2_x}\int \big( (\partial_x g)^2 + g^2 \big) e^z \lesssim \delta(x^{-1}_0)\int \big( (\partial_x g)^2 + g^2 \big) e^z.
    \end{equation}
    Therefore, by \eqref{proof_dec_t_bound_norms_of_g} it holds
    \begin{equation}
    \begin{split}
         \int \Big| \frac13 (g+f_0)^6-\frac 13 f^6_0 - 2g(g+f_0)^5\Big|e^z &\lesssim \int \big(g^6 + g^2 f^4_0 \big) e^z\\
         &\lesssim \|g^4(t) e^{z(t)}\|_{L^{\infty}_x}\int g^2 + \int g^2 f^4_0 e^z \lesssim \delta(x^{-1}_0)\int \big( g^2 + (\partial_x g)^2\big) e^z.
    \end{split}
    \end{equation}
    The bound $|g|^5 \lesssim g^6 + g^2$ yields
    \begin{equation}
        \int \Big| (\partial_x f_0)\big[ (g+f_0)^5 -f^5_0 -5g f^4_0\big] \Big| e^z \lesssim \int g^6 |f'_0|e^z + \int g^2 |f'_0||f_0|^3 e^z \lesssim \delta(x^{-1}_0) \int \big((\partial_x g)^2+ g^2\big) e^z.
    \end{equation}
    Finally, by Young inequality, we get
    \begin{equation}\label{proof_dec_tails_estim_deriv_on_M_0}
        \frac{d}{dt} M_0(t) + \frac32 \alpha \int (\partial_x g)^2 e^z + \frac 12 (\beta-\alpha^3)\int g^2 e^z \lesssim \int F^2_0 e^z.
    \end{equation}
    The term on the right hand side is estimated using the decay of $|F_0|$ in \eqref{estimation_on_F_0_all_deriv} and since $\alpha\in(1,5)$,
    \begin{equation}\label{proof_dec_tail_int_F_0}
    \begin{split}
        \int F^2_0 e^z \lesssim e^{\big( \frac{\alpha}{4}-\beta\big)t}  \Big( e^{\frac{\alpha-1}{2}x_0} + e^{\frac{\alpha-5}{2}x_0} \Big)
        \lesssim e^{-(\beta-\frac{\alpha}{4})t   + \frac{\alpha-1}{2}x_0}.
    \end{split}
    \end{equation}
    Therefore, by integrating the estimate \eqref{proof_dec_tails_estim_deriv_on_M_0} combined with \eqref{proof_dec_tail_int_F_0} yields that \eqref{HR_on_k_dec_tails} holds for $k=0$.
    
    \vspace{0.4cm}
    
    Assume \eqref{HR_on_k_dec_tails} holds for all $0\leq k'<k$, where $k'\in \NN \cup \{0\}$ and $k \in \NN$.
    
    By integration by parts the following estimate holds for all $l\in \NN\cup \{0\}$,
    \begin{equation}\label{proof_dec_tails_for_deriv_estim_on_dx_l}
        \|(\partial^l_x g)^2 e^{z}\|_{L^{\infty}} \lesssim \int \big[(\partial^l_x g)^2 + (\partial^{l+1}_x g )^2 \big]\,e^{z} \lesssim M_l + M_{l+1}.
    \end{equation}
    Thus, if $k'\geq 1 $
    \begin{equation}\label{consequence_of_HR_k}
        \eqref{HR_on_k_dec_tails}_{k'} \text{ and } \eqref{HR_on_k_dec_tails}_{k'-1} \Rightarrow \|(\partial^{k'-1}_x g)^2 e^{z}\|_{L^{\infty}} \lesssim  e^{\frac{\alpha-1}{2}x_0}.
    \end{equation}
    By integration by parts and the expression of $\partial_t g $ in \eqref{equation_of_g_proof_dec_tails}, we have
    \begin{equation}\label{proof_dec_tail_deriv_of_M_k}
        \begin{split}
            &\frac{d}{dt}\big[ M_k(t)\big] + (\beta - \alpha^3)\int (\partial^k_x g)^2 e^z + 3\alpha \int (\partial^{k+1}_xg)^2 e^z\\
            &= 2 \int (\partial^k_x F_0)(\partial^k_x g)e^z + 2 \int \partial^k_x \big( (g+f_0)^5 - f^5_0 \big)\, \big[ (\partial^{k+1}_x g) + \alpha (\partial^k_x g) \big]e^z.
        \end{split}
    \end{equation}
    
    We estimate the purely non linear term on the right hand side. Other terms are controlled in a similar way, but more easily due to the presence of $f_0$ and \eqref{estimation_of_norms_of_f_0}.
    We use the Leibniz rule for the derivative of the product, it yields
    \begin{equation}
        \begin{split}
            \int \big(\partial^k_x (g^5)\big)^2 e^z \lesssim \int \Big( \sum_{\substack{k_1+...+k_5 =k \\ k_i \leq k_{i+1}}} \prod\limits^5_{i=1}(\partial^{k_i}_x g) \Big)^2 e^z \lesssim \sum_{\substack{k_1+...+k_5 =k \\ k_i \leq k_{i+1}}} \prod\limits^4_{i=1}\|\partial^{k_i}_{x}g\|^2_{L^{\infty}}\int (\partial^{k_5}_x g)^2 e^z.
        \end{split}
    \end{equation}
    In the sum, if $k_4 = k$, then $k_5=0$, which is impossible. If $k_4 = k-1$, then $k_5= 1$ and therefore $k=1$ or $2$.
    For $k=1$ it creates a term $\|g\|^8_{L^{\infty}_x}\int(\partial_x g)^2 e^z$, which is compensated by the similar term with the negative sign in the expression of $\frac{d}{dt}M_1(t)$, due to the control of $g$ in \eqref{proof_dec_t_bound_norms_of_g}.
    For $k=2$, it sufficient to use \eqref{proof_dec_t_bound_norms_of_g} and apply
    $\eqref{HR_on_k_dec_tails}_{k=1}$ after the integration in time of $\frac{d}{dt}M_2(t)$.

    In all other cases we have $k_4 < k-1$. By \eqref{proof_dec_t_bound_norms_of_g} we have
    \begin{equation}\label{estimate_of_int_deriv_g^5}
    \begin{split}
        \int \big(\partial^k_x (g^5)\big)^2 e^z 
        \lesssim \delta(x^{-1}_0) \sum^{k-1}_{k' =1}\int (\partial^{k'}_x g)^2 e^z  + \delta(x^{-1}_0)\int (\partial^k_x g)^2 e^z.
    \end{split}
    \end{equation}
    
    From \eqref{proof_dec_tail_deriv_of_M_k} by interpolation, combining with \eqref{estimate_of_int_deriv_g^5} and integrating in time (observe that $M_k(0) = 0$) and using $\eqref{HR_on_k_dec_tails}_{k'<k}$, it yields
    \begin{equation}
        \begin{split}
            &M_k(t) + \frac 12(\beta - \alpha^3)\int^t_0 \int (\partial^k_x g)^2 e^z + \frac12 3\alpha \int^t_0 \int (\partial^{k+1}_x g)^2 e^z \\
            &\lesssim \int^t_0 \int (\partial^k_x F_0)^2 e^z + \delta(x^{-1}_0)\sum^{k-1}_{k_5 =1}\int^t_0\int (\partial^{k_5}_x g)^2 e^z
            \lesssim  e^{ \frac{\alpha-1}{2}x_0} .
        \end{split}
    \end{equation}
    The term with $\partial^k_x F_0$ is estimated exactly as in \eqref{proof_dec_tail_int_F_0}. 
    
    The initiation step $k=0$ was proven previously. Therefore, by induction, $\eqref{HR_on_k_dec_tails}$ holds for all $k \in \NN \cup \{0\}$. 
    
    Hence, from \eqref{proof_dec_tails_for_deriv_estim_on_dx_l} and the choice of the parameters $\alpha,\beta$, we get for all $k \in \NN \cup \{0\}$ and for $x$ verifying \eqref{dec_t_condition_on_x}
    \begin{equation}
        |\partial^k_x g(t,x)|^2 \lesssim e^{-z}\, \big(M_k(t) + M_{k+1}(t) \big) \lesssim e^{-\frac{19}{4}x + 110t + \frac{15}{8}x_0} \lesssim e^{-\frac{x_0}{8}}e^{-4x},
    \end{equation}
    which proves \eqref{lemma_decaying_tails_estim_deriv_on_x_order_k}.
    
    Lastly, the estimate on the derivatives of $f$ and its equation yields for $x$ satisfying \eqref{dec_t_condition_on_x} 
    \begin{equation}
        |\partial_t f(t,x)| \lesssim |\partial^3_x f(t,x)|  + |f(t,x)|^4 |\partial_x f(t,x)| \lesssim e^{-\frac12 x},
    \end{equation}
    which proves \eqref{lemma_decaying_tails_estim_deriv_on_t}.
 \end{proof}

%% file: Decomposition_around_Q.tex
\section{Decomposition around the soliton}\label{section_decomposition} 

\subsection{Structure of the linearised operator}

We recall some properties of the linearised operator $\mathcal{L}$ around the soliton defined in \eqref{def_of_lin_operator}.

We first introduce the following function space
\begin{equation}
    \mathcal{Y} := \{\phi \in C^{\infty}(\RR,\RR) \,|\, \forall k \in \NN, \exists C_k, r_k>0 \;\text{ s.t. }\; \lvert \phi^{(k)}(y)\rvert\leq C_k(1+|y|)^{r_k}e^{-|y|}, \forall y \in \RR \}.
\end{equation}
We recall some standard results on the operator $\mathcal{L}$ (see for instance \cite{Weinstein-85} and \cite{Martel-Merle-01}).

\begin{lemma}[Properties of the linearised operator $\mathcal{L}$]\label{lemma_prop_of_linear_operator}
The operator $\mathcal{L}: H^2(\RR) \subset L^2(\RR)\rightarrow L^2(\RR)$ satisfies
\begin{enumerate}
    \item (Spectrum) The operator $\mathcal{L}$ has only one negative eigenvalue, $\mathcal{L}Q^3 = -8 Q^3$. Moreover, $\ker{\mathcal{L}} = \{aQ' : a \in \RR \}$ and $\sigma_{ess}(\mathcal{L})=[1,+\infty)$.
    \item (Scaling) $\mathcal{L}\Lambda Q = -2Q$ and $(Q,\Lambda Q)=0 $.
    \item (Coercivity) There exists $\nu_0>0$ such that for any $\phi\in H^1$
    \begin{equation}
        (\mathcal{L}\phi,\phi)\geq \nu_0\|\phi\|^2_{H^1}- \frac{1}{\nu_0}[(\phi,Q)^2 + (\phi, y\Lambda Q)^2 + (\phi, \Lambda Q)^2] .
    \end{equation}
    \item (Invertibility) There exists a unique even function $R \in \mathcal{Y}$ such that
    \begin{equation}\label{properties_of_R}
        \mathcal{L}R = 5Q^4, \qquad\qquad (Q,R) = -\frac{3}{4}\int Q.
    \end{equation}
    \item (Invertibility (bis))
    There exists a unique function $P \in C^{\infty}(\RR)\cap L^{\infty}(\RR)$ such that $P' \in \mathcal{Y}$ and 
    \begin{equation}
        (\mathcal{L}P)' = \Lambda Q, \qquad  \lim_{y \to -\infty}P(y) = \frac{1}{2}\int Q, \qquad \lim_{y\to+\infty}P(y)=0.
    \end{equation}
    Moreover, it holds
    \begin{equation}\label{relation_on_scalar_prod_P_and_Q}
        (P,Q) = c_1,  \qquad (P,Q') = 0 \quad\text{with}\quad c_1 = \frac{1}{16}\Big( \int Q \Big)^2 > 0.
    \end{equation}
\end{enumerate}
\end{lemma}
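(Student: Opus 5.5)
We follow the classical route of \cite{Weinstein-85} and \cite{Martel-Merle-01}. Most items reduce to explicit computations based on \eqref{equation_of_Q} and its first integral $(Q')^2 = Q^2 - \frac13 Q^6$. The coercivity (3) is the genuinely spectral part.

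\emph{Spectrum.} Using the first integral we get $(Q^3)'' = 3Q^2Q'' + 6Q(Q')^2 = 9Q^3 - 5Q^7$. Hence $\mathcal{L}Q^3 = -8Q^3$. Differentiating \eqref{equation_of_Q} gives $\mathcal{L}Q' = 0$.

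Since $Q^3>0$, Sturm--Liouville theory shows that $-8$ is the lowest eigenvalue. Since $Q'$ has exactly one zero, $0$ is the second eigenvalue. Therefore there is exactly one negative eigenvalue.

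The kernel is one-dimensional: any second solution of $\mathcal{L}u=0$ grows like $e^{|y|}$ at $\pm\infty$. Finally, $5Q^4$ decays exponentially, so $\mathcal{L}$ is a relatively compact perturbation of $-\partial_y^2+1$. Weyl's theorem then gives $\sigma_{ess}=[1,+\infty)$.

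\emph{Scaling.} The function $Q_\mu(y)=\mu^{\frac12}Q(\mu y)$ solves $-Q_\mu''+\mu^2Q_\mu-Q_\mu^5=0$. Differentiating at $\mu=1$ gives $\mathcal{L}\Lambda Q=-2Q$. Moreover, $(Q,\Lambda Q)=\frac12\frac{d}{d\mu}\|Q_\mu\|_{L^2}^2\big|_{\mu=1}=0$ by $L^2$ invariance.

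\emph{Invertibility.} By the Fredholm alternative, $\mathcal{L}$ is invertible on even functions, since its kernel is spanned by the odd function $Q'$. This yields a unique even $R$ with $\mathcal{L}R=5Q^4$. Because $\mathcal{L}-1$ is an exponentially decaying perturbation, ODE asymptotics give $R\in\mathcal{Y}$.

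Using self-adjointness and the scaling identity,
\[
(Q,R)=-\tfrac12(\mathcal{L}\Lambda Q,R)=-\tfrac12(\Lambda Q,5Q^4)=-\tfrac12\Big(\tfrac52\int Q^5-\int Q^5\Big)=-\tfrac34\int Q^5 .
\]
Integrating \eqref{equation_of_Q} over $\RR$ gives $\int Q^5=\int Q$, which is the claimed value.

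For $P$, we solve $\mathcal{L}P=-\int_y^{+\infty}\Lambda Q$. As $y\to-\infty$, the right-hand side tends to $-\int\Lambda Q=\frac12\int Q$. We write $P=\frac12(\int Q)\chi+\tilde P$, where $\chi$ is a smooth step equal to $1$ near $-\infty$ and $0$ near $+\infty$. The equation for $\tilde P$ then has a right-hand side in $\mathcal{Y}$.

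Its solvability condition is orthogonality to $Q'$. We expect to check it by integrating by parts, reducing it to $(\Lambda Q,Q)=0$, with boundary terms controlled by the construction of $\chi$. Adding a multiple of $Q'$ then enforces $(P,Q')=0$.

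The value of $(P,Q)$ follows as before, since $\Lambda Q$ decays exponentially and so the integration by parts is legitimate even though $P$ does not decay at $-\infty$:
\[
(P,Q)=-\tfrac12(\mathcal{L}P,\Lambda Q)=\tfrac12\int\Lambda Q(y)\int_y^{\infty}\Lambda Q
=\tfrac14\Big(\int\Lambda Q\Big)^2=\tfrac1{16}\Big(\int Q\Big)^2 .
\]

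\emph{Coercivity (main obstacle).} We first show $(\mathcal{L}\phi,\phi)>0$ for $\phi\neq0$ with $\phi\perp Q$ and $\phi\perp Q'$. This is Weinstein's argument: with one negative direction, positivity on $Q^\perp$ follows from $(\mathcal{L}^{-1}Q,Q)=-\frac12(\Lambda Q,Q)=0\le 0$, and the degenerate case is handled by a limiting/compactness argument.

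Next we replace $Q'$ by $y\Lambda Q$ in the orthogonality conditions. This works because $(y\Lambda Q,Q')\neq0$, which we would verify explicitly by integration by parts. The extra condition involving $\Lambda Q$ only helps.

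Finally, we upgrade to $H^1$ coercivity with the quadratic error terms by a standard contradiction and weak-compactness argument. The key points are that $5Q^4\phi_n^2$ is compact and that $(\mathcal{L}\phi,\phi)\ge\|\phi\|_{H^1}^2-C\int Q^4\phi^2$.
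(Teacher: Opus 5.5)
Your treatment of items (1), (2), (4) and (5) is the standard one from the references the paper cites; the paper itself gives no proof. Your computations of $\mathcal{L}Q^3$, $\mathcal{L}\Lambda Q$, $(Q,R)$ and $(P,Q)$ are correct. One small point: uniqueness of $P$ holds only once $(P,Q')=0$ is imposed as a normalization, since $P+aQ'$ meets every other requirement. You should say so, noting that two solutions differ by a decaying solution of $\mathcal{L}u=0$, hence by a multiple of $Q'$.

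The genuine gap is in the coercivity (3). Your first step claims $(\mathcal{L}\phi,\phi)>0$ for every nonzero $\phi$ with $\phi\perp Q$ and $\phi\perp Q'$. This is false. Take $\phi=\Lambda Q$:
\begin{itemize}
\item it is even, hence orthogonal to $Q'$;
\item it satisfies $(\Lambda Q,Q)=0$ by (2);
\item and $(\mathcal{L}\Lambda Q,\Lambda Q)=-2(Q,\Lambda Q)=0$.
\end{itemize}
Since $y(\Lambda Q)^2$ is odd, the same $\phi$ is also orthogonal to $y\Lambda Q$. So the statement you obtain after replacing $Q'$ by $y\Lambda Q$ fails too. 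This is exactly the $L^2$-critical degeneracy. The equality $(\mathcal{L}^{-1}Q,Q)=-\frac12(\Lambda Q,Q)=0$ in Weinstein's criterion gives only $\mathcal{L}\ge 0$ on $Q^\perp$, not positivity. No limiting or compactness argument can produce strict positivity when an explicit null vector exists. In particular, the condition $(\phi,\Lambda Q)=0$ does not ``only help''; it is indispensable.

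The repair is to identify the null directions of the form on $Q^\perp$.
\begin{itemize}
\item If $\phi\in Q^\perp$ and $(\mathcal{L}\phi,\phi)=0$, then $\phi$ lies in the radical of the form, because the form is nonnegative on $Q^\perp$. Hence $\mathcal{L}\phi=\alpha Q$ and $\phi=-\frac{\alpha}{2}\Lambda Q+\beta Q'$.
\item The two conditions $(\phi,y\Lambda Q)=(\phi,\Lambda Q)=0$ then force $\alpha=\beta=0$. By parity, $(\Lambda Q,y\Lambda Q)=(Q',\Lambda Q)=0$, while $(Q',y\Lambda Q)=(\Lambda Q,\Lambda Q)=\|\Lambda Q\|_{L^2}^2\neq 0$.
\end{itemize}
With this, your contradiction and weak-compactness argument works on $\{Q,y\Lambda Q,\Lambda Q\}^\perp$. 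A normalized sequence with $(\mathcal{L}\phi_n,\phi_n)\to 0$ has a nonzero weak limit, thanks to the compact term $\int Q^4\phi_n^2$. That limit is a null vector of the form on $Q^\perp$ satisfying all three orthogonality conditions, hence zero, which is a contradiction. The $H^1$ bound and the quadratic error terms then follow as you describe.
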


\begin{remark}
    By $\lim_{+\infty}P=0$ and since $P' \in \mathcal{Y}$, it holds
    \begin{equation}\label{property_on_right_on_P}
       | P(y)| \lesssim e^{-\frac{|y|}{2}}, \quad y>0.
    \end{equation}
\end{remark}

\subsection{Definitions and estimates of localized profiles}\label{Subsection:def_of_Q_b}
We introduce a one-parameter family of refined blow up profiles $b \in (0,+\infty) \mapsto Q_b$, where the parameter $b$ is chosen to be small enough. This family serves to describe the higher-order deformation of the ground state profile $Q_{b=0}$ in the blow up regime.

Introduce a function $\chi \in C^{\infty}(\RR)$ such that
\begin{equation}\label{def_of_chi}
    0\leq \chi\leq 1, \quad 0\leq (\chi'')^2 \lesssim \chi' \quad \text{on }\RR, \qquad \chi_{|(-\infty,-2)} \equiv 0 \quad \text{and} \quad \chi_{|(-1,+\infty)} \equiv 1.
\end{equation}

\begin{definition}[Localised profile]
    Let $\gamma = \frac{3}{4}$, the localised profile $Q_b$ is defined by
    \begin{equation}
        Q_b(y) = Q(y) + bP_b(y)
    \end{equation}
    with 
    \begin{equation}
        P_b(y) = P(y) \chi_b(y) \quad \text{and} \quad \chi_b(y) = \chi(b^{\gamma}y).
    \end{equation}
\end{definition}

The following lemma states the properties of localised profile $Q_b$.
\begin{lemma}[Approximate self-similar profiles $Q_b$]
    There exists $b^*>0$ small enough such that for all $0<b<b^*$, the following properties hold:
    \begin{enumerate}
        \item (Estimates on $Q_b$) For all $y\in \RR$,
        \begin{equation}\label{estimate_on_Q_b_and_its_derivs}
        \begin{split}
            \lvert Q^{(k)}_b(y)\rvert \lesssim e^{-|y|}+b \,e^{-\frac{|y|}{2}} + b^{1+k\gamma}\,\mathds{1}_{[-2,-1]}(b^{\gamma}y), \quad \forall k \geq 0.
        \end{split}
        \end{equation}

        \item (Equation of $Q_b$) Let the error term be defined by
        \begin{equation}
            -\Psi_b = \big( Q''_b-Q_b+Q^5_b \big)' + b\Lambda Q_b - 2 b^2 \frac{\partial Q_b}{\partial b}.
        \end{equation}
        Then, for all $y\in \RR$,
        \begin{equation}
            \lvert \Psi^{(k)}_b(y) \rvert \lesssim b^{1+(k+1)\gamma}\, \mathds{1}_{[-2,-1]}(b^{\gamma}y) + b^2 e^{-\frac{|y|}{2}}+ b^2 \mathds{1}_{[-2,0]}(b^{\gamma}y)\mathds{1}_{(k = 0)}, \qquad \forall k \geq 0.
        \end{equation}
        Moreover,
        \begin{equation}\label{projection_Psi_b_on_Y_and_norm_L2B}
            \lvert (\Psi_b,\phi)\rvert \lesssim b^2, \qquad \forall \phi \in \mathcal{Y} \qquad\text{and}\qquad \| \Psi^{(k)}_b (y) \|_{L^2_B} \lesssim C_B\, b^2, \qquad \forall k \geq 0.
        \end{equation}

        \item (Projection of $\Psi_b$ in the direction $Q$)
        \begin{equation}\label{projection_of_Psi_b_on_Q}
            \lvert (\Psi_b, Q) \rvert \lesssim b^3.
        \end{equation}

        \item (Mass and energy properties of $Q_b$)
        \begin{equation}\label{estimate_mass_of_Qb}
            \Big| \int Q^2_b - \Big( \int Q^2 + 2b ( P, Q) \Big) \Big| \lesssim b^{2-\gamma},
        \end{equation}
        \begin{equation}\label{estimate_energy_of_Qb}
            \Big| E(Q_b) + b (P, Q) \Big| \lesssim b^2.
        \end{equation}
    \end{enumerate}
\end{lemma}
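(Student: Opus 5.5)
The plan is to work directly from the definitions $Q_b = Q + bP\chi_b$ with $\chi_b(y)=\chi(b^\gamma y)$, $\gamma = \frac34$. We use Lemma \ref{lemma_prop_of_linear_operator}: $P'\in\mathcal{Y}$, $P\to\frac12\int Q$ at $-\infty$, and the bound \eqref{property_on_right_on_P} on the right. Throughout, we track where $\chi_b$ is non-constant, which is the region $b^\gamma y\in[-2,-1]$.

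\textbf{Estimates on $Q_b$.} Expand $\partial_y^k(P\chi_b)$ by Leibniz. The term $P^{(k)}\chi_b$ is bounded by $e^{-|y|/2}$ when $k\ge1$, since $P'\in\mathcal{Y}$. When $k=0$ it is bounded by $|P|\lesssim e^{-|y|/2}$ for $y>0$ and by a bounded function on the support $y\ge -2b^{-\gamma}$. Terms carrying $j\ge1$ derivatives on $\chi_b$ contribute $b^{j\gamma}$ and are supported in $[-2,-1]$ after rescaling. After multiplying by $b$, this gives \eqref{estimate_on_Q_b_and_its_derivs}. The bounded part of $bP$ on $y\in[-2b^{-\gamma},-1]$ is then absorbed; I expect the stated indicator form to require $P$ bounded there.

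\textbf{Equation of $Q_b$.} Since $\partial_bQ_b = P_b + b\,\partial_b\chi_b\,P$, the error expands as
\[
-\Psi_b = \big(Q''+ Q^5 - Q\big)' + b\big[(P_b''-P_b+5Q^4P_b)' + \Lambda Q\big] + b^2\Lambda P_b - 2b^2\partial_bQ_b + \big((Q+bP_b)^5-Q^5-5bQ^4P_b\big)'.
\]
The first bracket vanishes by \eqref{equation_of_Q}. For the $b$-term, write $P_b=P\chi_b$. Then $-(\mathcal{L}P_b)' + \Lambda Q = \Lambda Q(1-\chi_b) + $ commutator terms involving $\chi_b'$, $\chi_b''$, $\chi_b'''$ multiplied by $P,P',P''$. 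The first piece is supported where $y<-b^{-\gamma}$, so it is $O(e^{-b^{-\gamma}/2})$ and negligible. The commutator term $bP\chi_b'''$ is of size $b^{1+3\gamma}$; the term $bP\chi_b'$ coming from $(P_b)'$ inside $-(P_b)'$ has size $b^{1+\gamma}$ on $[-2,-1]$. For the $k$-th derivative this becomes $b^{1+(k+1)\gamma}$. The term $b^2\Lambda P_b$ gives $b^2(\frac12 P\chi_b + yP'\chi_b + yP\chi_b')$. Here $yP\chi_b'\sim b^{2-\gamma}\cdot b^{\gamma}\cdot b^{-\gamma}\ldots$, namely $|y\chi_b'|\lesssim 1$ on the transition region, so it is bounded by $b^2$ on $[-2,0]$ in rescaled variables. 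Likewise $\frac12 b^2P\chi_b$ is $b^2\mathbf{1}_{[-2,0]}$ for $y<0$, which produces the $k=0$ indicator term. The term $-2b^2\partial_bQ_b$ contains $b^2\partial_b\chi_b = \gamma b\, y\chi'(b^\gamma y)b^{\gamma-1}$, and this is similar. The nonlinear term is $O(b^2Q^3P^2)$ and hence is $b^2e^{-|y|/2}$. Each derivative of a $k=0$ term localized at the plateau falls onto $\chi_b$ or onto $P'\in\mathcal{Y}$, which explains why the indicator appears only for $k=0$. The projections \eqref{projection_Psi_b_on_Y_and_norm_L2B} follow by integrating these bounds: against $\phi\in\mathcal{Y}$ the far-left pieces are exponentially small, and in $L^2_B$ the weight $e^{y/B}$ kills the region $y\sim -b^{-\gamma}$, contributing $e^{-b^{-\gamma}/B}\ll b^2$.

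\textbf{Projection on $Q$, mass, and energy.} For $(\Psi_b,Q)$, I will show that the $b^2$ coefficient cancels. Localization errors are exponentially small against $Q$, so it suffices to compute with $\chi_b\equiv1$. The $b^2$ part is $(\Lambda P - 2P + 10Q^3P^2\ldots)$ tested against $Q$. Using $(Q,\Lambda P) = -(\Lambda Q,P)$ and the identity $(\Lambda Q,P) = ((\mathcal{L}P)',P)$, the nonlinear part $(10Q^3P^2P'\ldots,Q)$ should combine, via integration by parts and $\mathcal{L}Q' = 0$, with the relation $(P,Q) = c_1$ to vanish. This cancellation is the main obstacle, and I expect it to need the precise identity $\int(Q^5)'\ldots$; I would try first differentiating the relation $\mathcal{L}Q'=0$ together with $(\mathcal{L}P)'=\Lambda Q$ and using the antisymmetry of $\partial_y$. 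For the mass, expand $\int Q_b^2 = \int Q^2 + 2b(P_b,Q) + b^2\int P_b^2$. The term $\int P_b^2\lesssim b^{-\gamma}$, since $P$ is bounded on a region of length $b^{-\gamma}$, and $(P_b,Q)-(P,Q)$ is exponentially small; this gives the $b^{2-\gamma}$ bound. For the energy, $E(Q_b) = E(Q) + b(E'(Q),P_b) + O(b^2\int(P_b'^2 + Q^4P_b^2))$, with $E(Q)=0$ and $E'(Q) = -Q''-Q^5 = -Q$. This gives $-b(P,Q)$ up to $O(b^2)$, because $P_b'$ is square-integrable uniformly, with $\int (P\chi_b')^2\lesssim b^{\gamma}$.
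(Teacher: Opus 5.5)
Your treatment of items 1, 2 and 4 is the same direct computation as in \cite[Lemma 2.4]{MMPI}, which the paper cites instead of giving a proof, and it is essentially sound. The genuine gap is item 3, the only non-routine part of the lemma. You correctly reduce it to showing that the order-$b^2$ part of $-\Psi_b$, namely $\Lambda P-2P+(10Q^3P^2)'$, is orthogonal to $Q$. (One may take $\chi_b\equiv1$ here, since localization errors are $O(e^{-cb^{-\gamma}})$ against $Q$.) That is, you need
\[
(P,\Lambda Q)+2(P,Q)+10(Q^3P^2,Q')=0 .
\]
You leave this unproved, and the route you suggest (antisymmetry of $\partial_y$, $\mathcal{L}Q'=0$) cannot give it. The identity comes entirely from boundary terms at $-\infty$, where $P\to\frac12\int Q$ and $\mathcal{L}P\to\frac12\int Q$ do not vanish. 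The relation $\mathcal{L}Q'=0$ only kills the order-$b$ term. Writing $\Lambda Q=(\mathcal{L}P)'$ and integrating by parts,
\[
(P,\Lambda Q)=\big[P\,\mathcal{L}P\big]_{-\infty}^{+\infty}-\Big[\tfrac12P^2-\tfrac12(P')^2\Big]_{-\infty}^{+\infty}+5\int Q^4PP'=-\frac18\Big(\int Q\Big)^2-10(Q^3Q',P^2).
\]
The same mechanism gives $(P,Q)=-\frac12(\mathcal{L}P,\Lambda Q)=-\frac14\big[(\mathcal{L}P)^2\big]_{-\infty}^{+\infty}=\frac1{16}(\int Q)^2$, and together these two facts yield the cancellation. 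If you drop the boundary terms, as a pure antisymmetry argument does, you get $(P,\Lambda Q)+10(Q^3Q',P^2)=0$ and a spurious nonzero $b^2$ coefficient $2c_1$. Once the cancellation is established, $|(\Psi_b,Q)|\lesssim b^3$ follows. What remains is the cubic and higher nonlinear part $(10b^3Q^2P_b^3+\dots)'$, the term $-2b^3\partial_bP_b$ supported where $b^\gamma y\in[-2,-1]$, and exponentially small localization errors.

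Two smaller points. First, for $k=0$ in item 1, the printed bound with $b\,\mathbf{1}_{[-2,-1]}(b^\gamma y)$ fails on $-b^{-\gamma}<y\ll-1$. There $Q_b\approx\frac b2\int Q$, while the right-hand side is exponentially small, so your claim that this part of $bP$ is ``absorbed'' is not right. What your Leibniz argument proves, and what is stated in \cite[Lemma 2.4]{MMPI}, is the $k=0$ bound with $b\,\mathbf{1}_{[-2,0]}(b^\gamma y)$; the indicator on $[-2,-1]$ is correct only for $k\geq1$. Second, in item 2 the nonlinear term is not purely $O(b^2Q^3P^2)$: it contains $b^5(P_b^5)'$, which has no factor of $Q$. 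This is harmless, since that term is bounded by $b^{5+\gamma}\mathbf{1}_{[-2,-1]}(b^\gamma y)+b^5e^{-|y|/2}$.
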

\begin{proof}
The proofs of this results can be found in \cite[Lemma 2.4]{MMPI} and in \cite[Lemma 3.3]{Martel-Pilod}. 
\end{proof}

\subsection{Definition of the approximate solution}

Given time dependent functions $\lambda>0$, $\sigma$ in $C^1(\RR)$, we introduce for some $s_0$ large enough
\begin{equation}\label{def_of_rescaled_time}
    \tau(s) = \int^s_{s_0} \lambda^3(s')\,ds'.
\end{equation}
The new spatial variable is defined as follows
\begin{equation}
    y = \frac{x-\sigma(s)}{\lambda(s)}.
\end{equation}

For $U$ solution of \eqref{gKdV_principal_eq}, we pass to the rescaled variable by setting
\begin{equation}
    V(s,y) := \lambda^{\frac12}(s)\big( U(\tau(s),x) - f(\tau(s), x)\big), \qquad F(s,y) := \lambda^{\frac 12 }(s) f(\tau(s),x). 
\end{equation}
Recall that $f$ is the solution to \eqref{equation_of_f} evolving from $f_0$, introduced in the Section \ref{S:2}.

The error of \eqref{gKdV_principal_eq} in the rescaled variables is given by
\begin{equation}\label{def_of_error_term_mathcal_E}
        \mathcal{E}(V) = V_s+\partial_y \big( \partial^2_y V -V + (V+F)^5 - F^5 \big) -\frac{\lambda_s}{\lambda}\Lambda V - \Big( \frac{\sigma_s}{\lambda}-1 \Big)\partial_yV.
    \end{equation}
Hence, $U$ solves \eqref{gKdV_principal_eq} if and only if $\mathcal{E}(V) =0$.

Our aim is to construct an approximate solution $W$ of the equation $\mathcal{E}(V) = 0$ in the following form
\begin{equation}\label{def_of_W_and_r}
    W(s,y) = Q_{b(s)}(y)+r(s)R(y), \quad \text{with} \quad r(s) := F(s,0) = \lambda^{\frac 12}(s)f(\tau(s),\sigma(s)).
\end{equation}
Note that the interest of the rescaled variables is to study the soliton both standing and with constant scaling. 

The refined profile $Q_b$ was defined in Subsection \ref{Subsection:def_of_Q_b} and $b>0$ is another function of $s$ to be chosen. The term $r(s)R(y)$, where $R$ is introduced in Lemma \ref{lemma_prop_of_linear_operator}, aims to compensate the first-order interaction between the soliton and the perturbing tail $f$.

Fix $0<\rho < \frac{1}{16}$ and set 
\begin{equation}
    \hatsig = \sigma - \frac14 \tau.
\end{equation}
The new parameter $\hatsig$ is introduced to take in account the time dependence of the tail $f_0$ (see Section \ref{S:strategy}).

We work under the following assumptions on the parameters, which are chosen in view of the formal asymptotics in \eqref{sketch_proof_formal_law_of_param}
\begin{equation}\tag{BS1}\label{BS1}
    \begin{cases}
        \lvert e^{\hatsig(s)} - s\rvert \;\leq\; s^{1-\rho} \,, \\
        \lvert \lambda(s) - s^{-1} \rvert \;\leq\; s^{-1-\rho} \,,\\
        \lvert b(s) - s^{-1} \rvert \;\leq\; s^{-1-\rho} \,.
    \end{cases}
\end{equation}

\begin{remark}\label{remark_on_the_compact_time_interval}
    Under the bootstrap condition $\eqref{BS1}$, the existence time of $U$ is finite. Therefore, for all $s \in [s_0,+\infty)$, the time variable $\tau(s)$ belongs to a compact set $[0,1]$. 
\end{remark}
\begin{remark}
In the following, we will need precise bounds on the parameters. Hence, we fix $s_0\gg1$ such that \eqref{BS1} implies that
\begin{equation}\label{conseq_of_BS1_strict_ineq}
    0<\frac{1}{2}s^{-1} \leq \lambda \leq \frac{3}{2}s^{-1}, \qquad \frac{1}{2}s \leq e^{\hatsig}\leq \frac{3}{2}s, \qquad 0<\frac{1}{2}s^{-1} \leq b \leq \frac{3}{2}s^{-1}.
\end{equation}
We also have 
\begin{equation}
    \frac 12 s \leq e^{\sigma} \leq \frac 32 e^{\frac 14} s.
\end{equation}
\end{remark}

\vspace{0.2 cm}
We introduce the auxiliary functions (related to the heuristic computations \eqref{equation_with_l0} and \eqref{l1})
\begin{equation}\label{definition-g-h}
    g(s)= \frac{b(s)}{\lambda^2(s)}+\frac{4}{\int Q}c_0\lambda^{-\frac{3}{2}}(s)e^{-\frac{1}{2}\hatsig(s)} \qquad\text{and}\qquad
    h(s) = \lambda^{\frac{1}{2}}(s) + \frac{4}{\int Q}c_0 e^{-\frac{1}{2}\hatsig(s)}.
\end{equation}
For simplicity of notation, we also define the following 
\begin{equation}\label{def_of_vec_M_m_n}
    \vec{m}= \begin{pmatrix}
        \frac{\lambda_s}{\lambda}+b\\
        \frac{\hatsig_s}{\lambda}-1
    \end{pmatrix}, \qquad \vec{n}= \begin{pmatrix}
        -b\\
        \frac{\lambda^2}{4}
    \end{pmatrix}, \qquad \vec{M}= \begin{pmatrix}
        \Lambda\\
        \partial_y
    \end{pmatrix}.
\end{equation}
\begin{remark}
Both $\vec{m}$ and $\vec{M}$ are classical, as they usually appear when computing $\mathcal{E}(W)$ (see Lemma \ref{lemma_on_EW_and_estimate_of_error}). Since $\vec{m}$ contains $\hatsig_s$ while the space variable $y$ is defined using $\sigma$, we introduce the quantity $\vec{n}$ whose second component will compensate a term appearing in the equation of $\vare$ (see Lemma \ref{Lemma_on_equation_of_vare}).

\end{remark}
 
\vspace{0.3cm}
The following lemma states technical findings on the behaviour of the perturbing tail in the rescaled variables in the proximity of the blow up profile.
\begin{lemma}\label{lemma_with_all_estimates_on_r_F_etc}
    Under the assumptions \eqref{BS1} and assuming
    \begin{equation}\label{assumption_on_sigma_on_t}
    \sigma(s)> \frac{4\times110}{3}\tau(s) + 3\,x_0 \quad \text{for all } \,\, s\in [s_0,+\infty),
    \end{equation}
    with $x_0 > 1$ and for $s_0$ large enough, the following estimates hold for $s \geq s_0 $
    \begin{equation}\label{estimates_on_r}
        \big| r - c_0 \lambda^{\frac{1}{2}} e^{-\frac 12 \hatsig} \big| \lesssim \delta(x^{-1}_0)s^{-\frac 52},\qquad  |r|\lesssim s^{-1},
    \end{equation}
    \begin{equation}\label{estimate_on_r_s}
         \Big| r_s - c_0 \partial_s \big( \lambda^{\frac12} e^{-\frac 12 \hatsig}\big)\Big| \lesssim \big( s^{-\frac52}|\vec{m}| + s^{-\frac 72} \big),
    \end{equation}
    \begin{equation}
        |r_s| \lesssim s^{-1}|\vec{m}| + s^{-2},
    \end{equation}
    \begin{equation}\label{estimate_on_r_minus_F}
        \big| r(s)-F(s,y) \big| \lesssim (1+y^2)s^{-2}, \qquad \forall y \in \RR,
    \end{equation}
    \begin{equation}\label{estimates_on_norms_L2_of_F}
        \|F(s)\|_{L^2_y} \lesssim e^{-\frac 18 x_0},
        \qquad \|\partial_y F(s) \|_{L^2_y} \lesssim e^{-\frac 18 x_0}\,s^{-1},
    \end{equation}
    \begin{equation}\label{estimates_on_norms_L_infty_of_F}
        \|F(s)\|_{L^{\infty}(y>-2b^{-\gamma})} \lesssim s^{-1}, \qquad \|\partial_y F\|_{L^{\infty}(y>-2b^{-\gamma})} \lesssim s^{-2},
    \end{equation}
    \begin{equation}\label{estimate_on_L_infty_of_dj_F}
        \| \partial^j_y \big( F(s,y) \big) \|_{L^{\infty}_y} \lesssim \delta(x^{-1}_0)s^{-j-\frac 12}, \qquad \text{for } j\geq 0,
    \end{equation}
    \begin{equation}\label{estimates_on_norms_L_infty_of_F_with_exp}
        | F(s,y)|  \lesssim (1+y^2)s^{-1}, \qquad | \partial_y F(s,y)| \lesssim (1+y^2) s^{-2},
    \end{equation}
    \begin{equation}\label{estimates_on_scalar_prod_r_minus_F_and_Q}
        \Big| \big( \partial_y(5 Q^4(F-r)), Q \big) + \frac 12 c_0 \big(\int Q \big)\lambda^{\frac 32}e^{-\frac 12 \hatsig}  \Big| \lesssim s^{-3}.
    \end{equation}
\end{lemma}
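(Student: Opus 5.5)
The plan is to reduce every estimate to Lemma \ref{lemma_on_the_decaying_tail} in a region around the soliton, and to use crude global bounds elsewhere. By \eqref{assumption_on_sigma_on_t}, the point $x=\sigma(s)$ satisfies $\frac34\sigma > 110\tau + \frac94 x_0$. Since $\lambda\lesssim s^{-1}$, every $x=\sigma+\lambda y$ with $|y|\le \sigma/(4\lambda)$ therefore still satisfies \eqref{dec_t_condition_on_x}, and also $x-\tau/4 \ge x_0$, so there $\Theta\equiv1$. In this region we write
\[
f(\tau,x)=c_0e^{-\frac12(x-\frac\tau4)}+O(\delta(x_0^{-1})e^{-2x}),
\]
and similarly for $\partial_x^k f$. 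At $y=0$ this gives
\[
r=\lambda^{\frac12}f(\tau,\sigma)=c_0\lambda^{\frac12}e^{-\frac12\hatsig}+O\big(\delta\,\lambda^{\frac12}e^{-2\sigma}\big).
\]
Since $e^{\sigma}\gtrsim s$ by \eqref{BS1}, the error is $\lesssim \delta s^{-5/2}$, and $|r|\lesssim s^{-1/2}s^{-1/2}=s^{-1}$. This proves \eqref{estimates_on_r}.

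For $r_s$, we differentiate:
\[
r_s=\tfrac12\tfrac{\lambda_s}{\lambda}r+\lambda^{\frac12}\big(\tau_s\partial_tf+\sigma_s\partial_xf\big)(\tau,\sigma).
\]
We use $\tau_s=\lambda^3$ and $\sigma_s=\hatsig_s+\lambda^3/4$. We then replace $\partial_tf,\partial_xf$ by the corresponding derivatives of $c_0e^{-\frac12(x-t/4)}$, with errors $\delta e^{-2\sigma}$; for $\partial_t$ this uses the equation and \eqref{lemma_decaying_tails_estim_deriv_on_x_order_k}. The main part is then exactly
\[
c_0\partial_s\big(\lambda^{\frac12}e^{-\frac12\hatsig}\big)
\]
evaluated along $\hatsig$. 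The remainder is bounded by $\lambda^{\frac12}e^{-2\sigma}(|\lambda_s/\lambda|+|\sigma_s|+\lambda^3)$. We write $\lambda_s/\lambda=m_1-b$ and $\hatsig_s=\lambda(m_2+1)$, which gives the bound $s^{-5/2}(|\vec m|+s^{-1})$ in \eqref{estimate_on_r_s}. The crude bound on $|r_s|$ follows from the same formula together with $\lambda^{\frac12}e^{-\frac12\hatsig}\sim s^{-1}$.

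For the pointwise bounds in $y$ we Taylor expand $F(s,y)=\lambda^{1/2}f(\tau,\sigma+\lambda y)$. In the good region, $\partial_yF=\lambda^{3/2}\partial_xf=O(s^{-2}e^{-\frac12\lambda y})$, and $\partial_y^2F=O(s^{-3}e^{-\frac12\lambda y})$. In the good region with $|\lambda y|\lesssim1$ this gives $|F-r|\lesssim |y|s^{-2}$. For $y$ outside this range, $|F|,|r|\lesssim \delta s^{-1/2}$ from the $H^1\subset L^\infty$ bound of Lemma \ref{lemma_on_the_decaying_tail}, and since $(1+y^2)s^{-2}\gtrsim s^{-1/2}$ once $|y|\gtrsim s^{3/4}$, \eqref{estimate_on_r_minus_F} follows. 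The same argument gives \eqref{estimates_on_norms_L_infty_of_F_with_exp}. For \eqref{estimates_on_norms_L_infty_of_F} on $y>-2b^{-\gamma}$ we have $\lambda|y|\lesssim s^{\gamma-1}\ll1$. Hence $x$ is still in the good region, $e^{-\frac12\lambda y}\lesssim1$, and we get $|F|\lesssim s^{-1}$ and $|\partial_yF|\lesssim s^{-2}$. The $L^2$ bounds \eqref{estimates_on_norms_L2_of_F} and the estimate \eqref{estimate_on_L_infty_of_dj_F} come from scaling: $\|\partial_y^jF\|_{L^2_y}=\lambda^{j}\|\partial_x^jf\|_{L^2_x}$ and $\|\partial^j_yF\|_{L^\infty}=\lambda^{j+\frac12}\|\partial_x^jf\|_{L^\infty}$. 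We then use $\|f\|_{H^k}\lesssim\delta$. One point to settle here is how to pass from $\delta$ to $e^{-x_0/8}$: I would take $\delta(x_0^{-1})$ as given by the proof, i.e.\ $\|f\|_{H^k}\lesssim\|f_0\|_{H^k}\lesssim e^{-x_0/8}$ via \eqref{estimation_of_norms_of_f_0}.

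The main obstacle is \eqref{estimates_on_scalar_prod_r_minus_F_and_Q}. We integrate by parts to get
\[
-\big(5Q^4(F-r),Q'\big)=-(Q^5)'\cdot(F-r)\ \text{integrated}=\int Q^5\,\partial_yF.
\]
Since $Q^5$ decays like $e^{-5|y|}$, the tail region contributes $O(s^{-10})$ and only $|y|\lesssim\log s$ matters. There we Taylor expand:
\[
\partial_yF(s,y)=\lambda^{3/2}\partial_xf(\tau,\sigma)+O\big(|y|\lambda^{5/2}\sup|\partial_x^2f|\big).
\]
The first term is $-\frac12c_0\lambda^{3/2}e^{-\frac12\hatsig}+O(\delta s^{-3/2}s^{-2})$. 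The second is $O(|y|s^{-3})$, which is integrable against $Q^5$. This yields the leading term $-\frac12c_0\lambda^{3/2}e^{-\frac12\hatsig}\int Q^5$. The delicate part is to check the constant, namely that $\int Q^5$ matches the normalization $\int Q$ in the statement (the sign and constant should be checked, possibly through the identity $\int Q^5=\int Q$ obtained by integrating \eqref{equation_of_Q}), and that all remainders are genuinely $O(s^{-3})$.
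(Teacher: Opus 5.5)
Your proposal is correct and follows essentially the paper's proof. You evaluate at $x=\sigma$ via Lemma \ref{lemma_on_the_decaying_tail}, split into $\lambda|y|\lesssim 1$ versus large $|y|$ for $F-r$ and $\partial_yF$, use scaling for the norm bounds, and for the last estimate integrate by parts to $\lambda^{3/2}\int Q^5\,\partial_xf(\tau,\sigma+\lambda y)$ with $\int Q^5=\int Q$, which indeed fixes both constant and sign. One small slip: on $\{y>-2b^{-\gamma}\}$ the bound $\lambda|y|\lesssim s^{\gamma-1}$ holds only for $y<0$. For $y\ge 0$, just note that $x\ge\sigma$ lies in the half-line where Lemma \ref{lemma_on_the_decaying_tail} applies and that $e^{-\frac12\lambda y}\le 1$.
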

\begin{proof}
By \eqref{lemma_decaying_tails_estim_deriv_on_x_order_k}, the assumption \eqref{assumption_on_sigma_on_t} on $\sigma$ and \eqref{BS1}, we get
\begin{equation}
\begin{split}
    &\big|r - c_0\lambda^{\frac 12} e^{-\frac 12 \hatsig} \big| = \lambda^{\frac 12}\big| f(\tau(s), \sigma) - f_0(\tau(s), \sigma) \big| \lesssim \delta(x^{-1}_0)\lambda^{\frac 12}e^{-2\sigma} \\
    &\lesssim \delta(x^{-1}_0)e^{-\frac{\tau(s)}{2}}\lambda^{\frac 12}e^{-2\hatsig} \lesssim \delta(x^{-1}_0) s^{-\frac 52}.
\end{split}
\end{equation}
    
Therefore,
\begin{equation}
    |r| \lesssim \delta(x^{-1}_0) s^{-\frac 52} + \lambda^{\frac 12}e^{-\frac 12 \hatsig} \lesssim s^{-1}.
\end{equation}

For the second inequality, we write
\begin{equation}
\begin{split}
    &\big| r_s - \partial_s \big(c_0 \lambda^{\frac 12} e^{-\frac12 \hatsig} \big) \big|\\
    = &\big| \frac 12 \lambda_s \lambda^{-\frac 12} f(\tau(s),\sigma) + \lambda^{\frac 72}(\partial_1 f)(\tau(s), \sigma) + \lambda^{\frac 12}\sigma_s (\partial_x f)(\tau(s), \sigma) - \frac{c_0}{2}\lambda^{-\frac 12}\lambda_s e^{-\frac 12 \hatsig} + \frac{c_0}{2}\lambda^{\frac 12}\hatsig_s\, e^{-\frac 12 \hatsig} \big|\\
    \leq &\big| \frac{1}{2}\lambda^{\frac 12}\frac{\lambda_s}{\lambda}\big|\, \big| f(\tau(s), \sigma) - f_0(\tau(s), \sigma) \big| + \lambda^{\frac 72}\big| (\partial_1 f)(\tau(s), \sigma)\big| + \lambda^{\frac 12}|\sigma_s|\, \big| (\partial_x f)(\tau(s), \sigma) - (\partial_x f_0)(\tau(s), \sigma) \big|.
\end{split}
\end{equation}

Using $|\sigma_s| \lesssim \lambda|\vec{m}| + \lambda$, the Lemma \ref{lemma_on_the_decaying_tail} with \eqref{assumption_on_sigma_on_t} and \eqref{BS1}, we have
\begin{equation}
\begin{split}
    &\big| r_s - \partial_s \big(c_0 \lambda^{\frac 12} e^{-\frac12 \hatsig} \big) \big| \lesssim \lambda^{\frac 12}e^{-2\sigma}\Big(\Big| \frac{\lambda_s}{\lambda} +b\Big| + b\Big) + \lambda^{\frac 72}e^{-\frac 12 \sigma} + \lambda^{\frac 32}e^{-2\sigma}|\vec{m}| + \lambda^{\frac 32}e^{-2\sigma}\\ 
    &\lesssim e^{-\frac{\tau(s)}{8}}\big( s^{-\frac 52}|\vec{m}| + s^{-\frac 72}\big).
\end{split}
\end{equation}

Thus, we get the estimate on $|r_s|$
\begin{equation}
\begin{split}
    &|r_s| \lesssim  s^{-\frac 52}|\vec{m}| + s^{-\frac 72} + \big| \partial_s (\lambda^{\frac 12} e^{-\frac 12 \hatsig})\big|\\
    &\lesssim  s^{-\frac 52}|\vec{m}| + s^{-\frac 72} + \lambda^{\frac 12}\big(|\vec{m}| +b \big)e^{-\frac 12 \hatsig}+\lambda^{\frac 32}e^{-\frac 12 \hatsig}
    \lesssim s^{-1}|\vec{m}| + s^{-2}.
\end{split}
\end{equation}

In order to get the estimate \eqref{estimate_on_r_minus_F} on $\RR$, we separate the two cases, $\lambda |y| <1$ and $\lambda |y| >1$.

The following estimate holds for all $y \in \RR$
\begin{equation}\label{proof_technic_estim_on_r_F_1}
    |r -F| \lesssim \lambda^{\frac 32}|y|\sup_{x \in J}\big| (\partial_x f)\big|,
\end{equation}
where $J = [\min(\sigma, \lambda y +\sigma), \max(\sigma, \lambda y + \sigma)]$. 
\begin{itemize}
    \item For $\lambda|y| < 1$, we get that $\lambda y + \sigma > \sigma - 1$. Thus, $J \subset [\sigma-1, +\infty)$. The assumption \eqref{assumption_on_sigma_on_t} and $x_0 > 1$ gives that $\sigma $ and $\sigma-1$ both satisfy the condition \eqref{dec_t_condition_on_x} of Lemma \ref{lemma_on_the_decaying_tail}. By \eqref{proof_technic_estim_on_r_F_1}, we get for $\lambda|y| < 1$
    \begin{equation}
        \begin{split}
            |F-r| \lesssim \lambda^{\frac 32}|y|\sup_{y >\sigma-1}\big| (\partial_x f)\big| \lesssim |y|\lambda^{\frac 32}e^{-\frac 12 \hatsig} \lesssim |y|s^{-2}.
        \end{split}
    \end{equation}
    \item For $\lambda|y|\geq 1$, it holds
    \begin{equation}
        |F-r | \lesssim |y|\lambda^{\frac 32}\|\partial_x f\|_{L^{\infty}_x}\lesssim \delta(x^{-1}_0)y^2\lambda^{\frac 52} \lesssim y^2 s^{-\frac 52}.
    \end{equation}
\end{itemize}
Combining the estimates on two regions yields the estimate \eqref{estimate_on_r_minus_F}.

Since $\|f\|_{H^1_x}\lesssim \delta(x^{-1}_0)$, we get 
\begin{equation}
    \|F(s)\|_{L^2_y} = \|f(\tau(s))\|_{L^2_x} \lesssim e^{-\frac 18 x_0} \quad\text{and}\quad\|\partial_y F(s)\|_{L^2_y} = \lambda \|(\partial_x f)\|_{L^2_x}\lesssim s^{-1}e^{-\frac 18 x_0}.
\end{equation}
Thus, we have the estimate in \eqref{estimates_on_norms_L2_of_F}.

In order to get \eqref{estimates_on_norms_L_infty_of_F}, we remark that by \eqref{BS1} and $0<\gamma<1$, for $s_0 \gg 1$, it holds 
\begin{equation}
    \{ y > -2b^{-\gamma} \} \subset \{ y > -\lambda^{-1} \} = \{\lambda y +\sigma > \sigma-1\}.
\end{equation}
By \eqref{assumption_on_sigma_on_t}, $\sigma -1$ verifies the assumption on $x$ in the Lemma \ref{lemma_on_the_decaying_tail}. Hence, for $y > -2b^{-\gamma}$, we have
\begin{equation}
    |F(s,y)| \lesssim \lambda^{\frac 12}\big( e^{-2(\lambda y +\sigma)} + e^{-\frac 12 (\lambda y +\hatsig)} \big) \lesssim s^{-1}.
\end{equation}
Similarly, we get
\begin{equation}
    |\partial_y F(s,y)| \lesssim \lambda^{\frac 32}\big( e^{-2(\lambda y +\sigma)} + e^{-\frac{1}{2}(\lambda y +\hatsig)} \big) \lesssim s^{-2}.
\end{equation}

For the estimate \eqref{estimate_on_L_infty_of_dj_F}, we write
\begin{equation}
    \|\partial^j_y F\|_{L^{\infty}_y} \lesssim \lambda^{j + \frac 12}\|\partial^j_x f\|_{L^{\infty}_t L^{\infty}_x} \lesssim \delta(x^{-1}_0)\,s^{-j-\frac 12}.
\end{equation}

The estimates \eqref{estimates_on_r} and \eqref{estimate_on_r_minus_F} give the first inequality in \eqref{estimates_on_norms_L_infty_of_F_with_exp}
\begin{equation}
    |F| \lesssim |r| +(1+y^2)s^{-2} \lesssim (1+y^2)s^{-1}.
\end{equation}

In order to get the second estimate in \eqref{estimates_on_norms_L_infty_of_F_with_exp}, we introduce
\begin{equation}
    \lambda^{\frac 32}\big|(\partial_x f)(\tau(s), \sigma) - (\partial_x f)(\tau(s),\lambda y + \sigma)\big|.
\end{equation}
The same bound holds for $y\in \RR$
\begin{equation}
    \big|(\partial_x f)(\tau(s), \sigma) - (\partial_x f)(\tau(s),\lambda y + \sigma)\big| \lesssim \lambda |y| \sup_{J}|(\partial^2_x f)|,
\end{equation}
with $J = [\min(\sigma, \lambda y +\sigma), \max(\sigma, \lambda y + \sigma)]$. 
Proceeding as in the proof of \eqref{estimate_on_r_minus_F}, separating the study on two regions $\lambda |y| < 1$ and $\lambda |y| \geq 1$ and by the result on decaying tails and the exponential decay on $s$, we get 
\begin{equation}\label{estimate_F_0_and_F_in_the_proof_tech_estim}
    \lambda^{\frac 32}\big|(\partial_x f)(\tau(s), \sigma) - (\partial_x f)(\tau(s),\lambda y + \sigma)\big| \lesssim (1+y^2)s^{-3}.
\end{equation}
Therefore
\begin{equation}
    \big| \partial_y F\big| \lesssim (1+y^2)s^{-3} + \lambda^{\frac 32}\big| (\partial_x f)(\tau(s),\sigma)\big| \lesssim (1+y^2)s^{-2}.
\end{equation}

We now prove the estimate \eqref{estimates_on_scalar_prod_r_minus_F_and_Q}. 
By integration by parts, we have
\begin{equation}
    \big( \partial_y(5 Q^4(F-r)), Q \big) = \lambda^{\frac 32}\int Q^5 (\partial_x f)(\tau(s),\lambda y + \sigma).
\end{equation}
Since $\int Q^5 =\int Q$, we get
\begin{equation}
\begin{split}
    &\Big| \big( \partial_y(5 Q^4(F-r)), Q \big) + \frac 12 c_0 \big(\int Q \big)\lambda^{\frac 32}e^{-\frac 12 \hatsig}  \Big|
    = \Big| \big( \partial_y(5 Q^4(F-r)), Q \big) - \lambda^{\frac 32}\big(\int Q \big) (\partial_x f_0)(\tau(s),\sigma)  \Big|\\
    &\leq \lambda^{\frac 32}\int Q^5 \big| (\partial_x f)(\tau(s), \lambda y +\sigma) -(\partial_x f_0)(\tau(s), \sigma) \big|\\
    &\leq \lambda^{\frac 32}\int Q^5 \big| (\partial_x f)(\tau(s), \lambda y +\sigma) -(\partial_x f)(\tau(s), \sigma) \big| + \lambda^{\frac 32}\int Q^5 \big| (\partial_x f)(\tau(s), \sigma) -(\partial_x f_0)(\tau(s), \sigma) \big|.
\end{split}
\end{equation}
The first term on the r.h.s. is estimated using the exponential decay of $Q$ and \eqref{estimate_F_0_and_F_in_the_proof_tech_estim}.

Using Lemma \ref{lemma_on_the_decaying_tail}  and \eqref{assumption_on_sigma_on_t}, for the second term on the r.h.s., we get
\begin{equation}
    \lambda^{\frac 32}\int Q^5 \big| (\partial_x f)(\tau(s), \sigma) -(\partial_x f_0)(\tau(s), \sigma) \big| \lesssim \lambda^{\frac 32}e^{-2\hatsig}\, \delta(x^{-1}_0)\int Q^5 \lesssim s^{-\frac 72}
\end{equation}
Combining the two estimates yields \eqref{estimates_on_scalar_prod_r_minus_F_and_Q}.
    
\end{proof}
Recall that $c_1 = \frac{1}{16}\Big(\int Q\Big)^2$.
\begin{lemma}[Mass and energy of $W+F$]
Under the assumption \eqref{BS1}, for $s_0$ large enough, it holds for $s\geq s_0$
\begin{equation}\label{estimate_on_mass_W_plus_F}
    \bigg\lvert \int (W+F)^2-\Big( \int Q^2 + 2b\int P Q + \frac{1}{2}r \int Q \Big) \bigg\rvert \lesssim s^{-2+\gamma} + e^{-\frac 14 x_0},
\end{equation}
\begin{equation}\label{estimate_on_energy_W_plus_F}
 \big\lvert   E(W+F) + c_1\,\lambda^2\,g(s) \big\rvert \lesssim  s^{-2}.
\end{equation}
\end{lemma}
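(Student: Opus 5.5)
Both estimates follow from expanding $W+F=Q_b+rR+F$ and using the decomposition $F=r+(F-r)$. On the support of $Q$ the difference $F-r$ is small by \eqref{estimate_on_r_minus_F}, while elsewhere we control $F$ through the $L^2$ bounds \eqref{estimates_on_norms_L2_of_F} and the $L^\infty$ bounds \eqref{estimates_on_norms_L_infty_of_F}. Throughout, \eqref{BS1} gives $b,\lambda,|r|\lesssim s^{-1}$.

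\textbf{Mass.} We write
\[
\int (W+F)^2=\int Q_b^2+2r\int Q_bR+2\int Q_bF+\int (rR+F)^2 .
\]
For $\int Q_b^2$ we use \eqref{estimate_mass_of_Qb}, which costs $b^{2-\gamma}\lesssim s^{-2+\gamma}$.

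Next, $2r\int Q_b R=2r(Q,R)+O(rb)=-\frac32 r\int Q+O(s^{-2})$, by \eqref{properties_of_R}.

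For the cross term we use the splitting
\[
2\int Q_bF=2r\int Q+2\int Q(F-r)+2b\int P_bF .
\]
The middle piece is $O(s^{-2})$ by \eqref{estimate_on_r_minus_F} and the decay of $Q$. For the last piece, $P_b$ is supported in $y>-2b^{-\gamma}$ and is bounded there, with exponential decay on the right by \eqref{property_on_right_on_P}. Using $\|F\|_{L^\infty(y>-2b^{-\gamma})}\lesssim s^{-1}$ we get $b\int |P_b F|\lesssim b\,s^{-1}b^{-\gamma}\lesssim s^{-2+\gamma}$.

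The cross terms so far sum to $2r\int Q-\frac32 r\int Q=\frac12 r\int Q$, as required.

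Finally, $\int (rR+F)^2\le 2r^2\int R^2+2\|F\|_{L^2}^2\lesssim s^{-2}+e^{-\frac14x_0}$. This last contribution is the source of the $e^{-\frac14 x_0}$ term in \eqref{estimate_on_mass_W_plus_F}.

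\textbf{Energy.} We expand
\[
E(W+F)=E(Q_b)+\int Q_b'(rR+F)'-\int Q_b^5(rR+F)+O(\text{quadratic}).
\]
By \eqref{estimate_energy_of_Qb}, $E(Q_b)=-bc_1+O(s^{-2})$, since $(P,Q)=c_1$. In the linear term, replacing $Q_b$ by $Q$ costs $O(b\cdot s^{-1})$ on the support of $P_b$. Integrating by parts and using $Q''=Q-Q^5$, the remaining linear part becomes
\[
-\int (Q''+Q^5)(rR+F)=-\int Q\,(rR+F)=-r(Q,R)-r\int Q+O(s^{-2})=-\tfrac14 r\int Q+O(s^{-2}).
\]
This uses again $|F-r|\lesssim(1+y^2)s^{-2}$ against $Q$.

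The key point is the cancellation $-\frac14 r\int Q\approx -\frac14 c_0\lambda^{1/2}e^{-\frac12\hatsig}\int Q$, which holds up to $s^{-5/2}$ by \eqref{estimates_on_r}. Since $c_1\frac{4}{\int Q}=\frac14\int Q$, this gives
\[
-b c_1-\tfrac14 c_0\lambda^{1/2}e^{-\frac12\hatsig}\int Q=-c_1\lambda^2 g(s).
\]

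\textbf{Main obstacle: the quadratic terms.} These must be $O(s^{-2})$, but $\|F\|_{L^2}$ is only $O(e^{-x_0/8})$, not small in $s$. Two observations handle this.

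For the gradient, we use $\|\partial_yF\|_{L^2}^2\lesssim s^{-2}$ by \eqref{estimates_on_norms_L2_of_F}, so $\int ((rR+F)')^2\lesssim s^{-2}$.

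For the potential part, every term containing $Q_b$ is localized, and there $F=O(s^{-1})$ on the relevant region. The pure terms $\int F^6$ and $\int (rR)^6$ must be handled by scaling back to the original variables. Since $\int F^6=\lambda^2\int f^6\lesssim s^{-2}$, this term is also acceptable.

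Mixed terms such as $Q_b^4F^2$ are bounded by $s^{-2}$ using the $L^\infty$ bound on $y>-2b^{-\gamma}$ together with the exponential decay of $Q$. The $b^{1+k\gamma}$ tail of $Q_b$ on $[-2,-1]b^{-\gamma}$ gives contributions like $b\,s^{-1}b^{-\gamma}$, which is $\lesssim s^{-2+\gamma}$.

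I expect the careful bookkeeping of the $\chi_b$ region to be the delicate point. If it yields only $s^{-2+\gamma}$, I would gain the missing factor through the explicit $b^{1+\gamma}$ size of $Q_b'$ there, or through the cancellation $\int Q_b'F'$ after integration by parts.
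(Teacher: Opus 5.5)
Your proposal is correct and follows the paper's proof essentially line by line. It uses the same expansion of $\int (W+F)^2$ with $2\int Q_bF=2r\int Q+2\int Q(F-r)+2b\int P_bF$, and for the energy the same integration by parts $-\int (Q''+Q^5)(rR+F)=-\int Q(rR+F)$ combined with $(Q,R)=-\frac34\int Q$, \eqref{estimate_energy_of_Qb} and \eqref{estimates_on_r}. The only cosmetic difference is that the paper isolates $E(F)=\lambda^2E(f_0)$ via energy conservation, where you bound $\int(\partial_yF)^2$ and $\int F^6$ separately by scaling.

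Your closing worry about the $\chi_b$ region is unfounded, and no fallback is needed. In the energy, $Q_b$ enters linearly in $rR+F$ only through $Q_b'$, which has size $b^{1+\gamma}$ on $[-2,-1]b^{-\gamma}$, and through $Q_b^5$; every other $Q_b$-term is at least quadratic in $rR+F$. So no $s^{-2+\gamma}$ loss arises there; that loss comes only from the mass term $2b\int P_bF$.
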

\begin{proof}
Using \eqref{properties_of_R}, we write
\begin{equation}
    \int (W+F)^2 - \int Q^2_b - \frac 12 r \int Q = 2br \int P_b R + 2 \int Q(F-r) + 2 b \int P_b F + r^2 \int R^2 + 2r \int RF + \int F^2.
\end{equation}
Therefore, by the estimates in Lemma \ref{lemma_with_all_estimates_on_r_F_etc}, the conservation of the mass $\|F\|_{L^2_y} = \|f_0\|_{L^2_x}$ and \eqref{estimation_of_norms_of_f_0}, we have
\begin{equation}
\begin{split}
    \Big| \int (W+F)^2 - Q^2_b - \frac 12 r Q \Big| \lesssim s^{-2} + s^{-1}\|F\|_{L^{\infty}(y > -2b^{-\gamma})}\int |P_b| + \|F\|^2_{L^2_y}\lesssim s^{-2+\gamma} + e^{-\frac14 x_0}.
\end{split}
\end{equation}
Combining the previous estimate with \eqref{estimate_mass_of_Qb} yields \eqref{estimate_on_mass_W_plus_F}.

The energy of $W+F$ is rewritten as follows
\begin{equation}
    \begin{split}\label{proof_energy_W_plus_F_rewriting_of_the_energy}
        E(W+F) &= E(Q_b) + E(F) + \int Q'_b \partial_y(rR+F)  + r\int R'\partial_y F + \frac 12 r^2 \int (R')^2\\
        &- \int Q^5_b(rR+F)  - \frac 16 \int \big[(Q_b + rR+F)^6 - Q^6_b - F^6 - 6 Q^5_b (rR+F) \big].
    \end{split}
\end{equation}

By Lemma \ref{lemma_with_all_estimates_on_r_F_etc} and the definition of $P_b$, we have
\begin{equation}
    \big|\int \big(Q'_b  -  Q'\big) \partial_y (rR+F)\big| \lesssim b\int |P'_b | \big(|rR'|+|\partial_y F|\big) \lesssim s^{-2} + s^{-1}\|\partial_y F\|_{L^{\infty}(y>-2b^{-\gamma})} \lesssim s^{-2}.
\end{equation}
and 
\begin{equation}
    \big| \int \big( Q^5 - Q^5_b\big) (rR+F)\big|\lesssim b^5 \int |P_b|^5 \big(|rR|+|F|\big) + b\int |P_b |Q^4 \big(|rR|+|F|\big) \lesssim s^{-2}.
\end{equation}
The estimates \eqref{estimates_on_r}, \eqref{estimates_on_norms_L_infty_of_F_with_exp}, the definition of $\chi_b$ and the decay of $R$, yield
\begin{equation}
    \begin{split}
        \big| \int \big[(Q_b + rR+F)^6 - Q^6_b - F^6 - 6 Q^5_b (rR+F) \big]\big| \lesssim \int Q^4_b (rR+F)^2 +\int \big|(rR+F)^6 - F^6\big| \lesssim s^{-2}
    \end{split}
\end{equation}
and
\begin{equation}
    \big|r \int R'\,\partial_y F\big| + r^2 \int (R')^2  \lesssim s^{-2}.
\end{equation}

Using $E(W+F)$ in \eqref{proof_energy_W_plus_F_rewriting_of_the_energy}, the equation of $Q$ and the above estimates, we obtain
\begin{equation}
\begin{split}
    &\big| E(W+F) + c_1 \, \lambda^2\,g(s) \big| \lesssim  \Big| E(Q_b) - \int Q(rR+F) + c_1 \big( b + \frac{4}{\int Q} c_0 \lambda^{\frac 12}e^{-\frac 12 \hatsig}\big) \Big| + |E(F)| + s^{-2}\\
    &\lesssim \big|E(Q_b) + b(P,Q) \big| + \frac{1}{4}\int Q \big|r - c_0 \lambda^{\frac 12} e^{-\frac 12 \hatsig}\big| + \int Q |F-r|+ |E(F)| + s^{-2} \lesssim s^{-2}.
\end{split}
\end{equation}
For the last inequality, we have used the equation of $P$ and of $R$, the estimate on $E(Q_b)$ in \eqref{estimate_energy_of_Qb} and Lemma \ref{lemma_with_all_estimates_on_r_F_etc}.

We have by \eqref{BS1}, \eqref{estimation_of_norms_of_f_0} and conservation of the energy
\begin{equation}
    E(F(s)) = \lambda^2 E(f(\tau(s))) = \lambda^2 E(f_0) \lesssim s^{-2} e^{-\frac 14 x_0}.
\end{equation}

\end{proof}

\begin{lemma}\label{lemma_on_EW_and_estimate_of_error}
    For $W$ defined in \eqref{def_of_W_and_r}, the error term $\mathcal{E}(W)$ defined in \eqref{def_of_error_term_mathcal_E} can be rewritten as
    \begin{equation}\label{equation_rewriting_of_error}
        \mathcal{E}(W) = -\vec{m}\cdot\vec{M}Q + \mathcal{R},
    \end{equation}
    where for $s_0$ large enough, for $s \geq s_0$, it holds
    \begin{equation}\label{estimate_on_norms_L_2_B_of_R}
        \|\mathcal{R} \|_{L^2_B} + \| \partial_y \mathcal{R}\|_{L^2_B} \lesssim |b_s| + s^{-1}|\vec{m}|+s^{-2}
    \end{equation}
    and
    \begin{equation}\label{estimate_on_scalar_prod_R_Q}
        \bigg\lvert\big( \mathcal{R}, Q\big) -c_1\,\bigg[  b_s+2b^2 +\lambda^2 \partial_s \Big( \frac{4}{\int Q} c_0 \lambda^{-\frac 32} e^{-\frac 12 \hatsig}\Big) \bigg] \bigg\rvert \lesssim  |b_s|s^{-100} + s^{-3} + s^{-1}|\vec{m}|.
    \end{equation}
\end{lemma}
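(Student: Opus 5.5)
We expand $\mathcal{E}(W)$ directly from the definition \eqref{def_of_error_term_mathcal_E} with $W = Q_b + rR$, and isolate the modulation terms. The first step is the algebraic decomposition. Write $W_s = b_s \frac{\partial Q_b}{\partial b} + r_s R$, and use the definition of $\Psi_b$ to replace $(Q_b''-Q_b+Q_b^5)'$ by $-b\Lambda Q_b + 2b^2\partial_b Q_b - \Psi_b$. The nonlinear term is expanded as
\begin{equation*}
(W+F)^5 - F^5 = Q_b^5 + 5Q_b^4(rR+F) + \mathcal{N},
\end{equation*}
where $\mathcal{N}$ is at least quadratic in $rR+F$ or carries a factor $F$. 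The linear part in $r$ is $r\,\partial_y(R''-R+5Q^4R) = -r\,\partial_y(\mathcal{L}R - 5Q^4) = 0$, which is the point of \eqref{properties_of_R}. What remains is $\partial_y(5Q^4(F-r))$, together with errors of type $5(Q_b^4-Q^4)(rR+F)$. For the modulation part, $-\frac{\lambda_s}{\lambda}\Lambda W - (\frac{\sigma_s}{\lambda}-1)W_y$ is rewritten using $\frac{\sigma_s}{\lambda} = \frac{\hatsig_s}{\lambda} + \frac{\lambda^2}{4}$, since $\tau_s = \lambda^3$. This gives
\begin{equation*}
-\vec m\cdot\vec M W + b\Lambda W - \tfrac{\lambda^2}{4}W_y .
\end{equation*}
The $b\Lambda Q_b$ term cancels against the one coming from $\Psi_b$. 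Replacing $W$ by $Q$ in $\vec m\cdot\vec M W$ produces the main term $-\vec m\cdot\vec M Q$. Everything else is put into $\mathcal{R}$:
\begin{equation*}
\mathcal{R} = (b_s+2b^2)\partial_bQ_b + r_sR - \Psi_b + \partial_y\big(5Q^4(F-r)\big) + \partial_y\mathcal{N}' + br\Lambda R - \tfrac{\lambda^2}{4}W_y - \vec m\cdot\vec M(bP_b + rR).
\end{equation*}

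For \eqref{estimate_on_norms_L_2_B_of_R}, each term is bounded in $L^2_B$ using the weight $e^{y/B}$, which decays on the left, so that the non-decaying left part of $P_b$ (supported in $y>-2b^{-\gamma}$) costs only a constant $C_B$. The individual bounds are as follows. First, $\|\partial_b Q_b\|_{L^2_B}\lesssim 1$, so that term gives $|b_s|+b^2$. Next, $|r_s|\lesssim s^{-1}|\vec m|+s^{-2}$. The norm of $\Psi_b$ follows from \eqref{projection_Psi_b_on_Y_and_norm_L2B}. The estimate on $F-r$ follows from \eqref{estimates_on_norms_L_infty_of_F_with_exp} and the exponential decay of $Q$, giving $s^{-2}$. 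The terms $br$ and $\lambda^2$ are $\lesssim s^{-2}$ by \eqref{BS1} and \eqref{estimates_on_r}. The terms $\vec m(bP_b+rR)$ give $s^{-1}|\vec m|$. For the nonlinear remainder, $F$ is controlled on $y>-2b^{-\gamma}$ by \eqref{estimates_on_norms_L_infty_of_F}. For $y<-2b^{-\gamma}$ we have $W=Q$ and use $|F|\lesssim\delta s^{-1/2}$ from \eqref{estimate_on_L_infty_of_dj_F}. This region appears to be the main obstacle, since products like $F^4Q$ there must be shown to be $O(s^{-2})$ after multiplying by $e^{y/B}$. I would use the decay of $Q$ combined with $|F|\lesssim (1+y^2)s^{-1}$. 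The $\partial_y\mathcal{R}$ bound is identical, using the derivative estimates.

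For \eqref{estimate_on_scalar_prod_R_Q}, we pair each term of $\mathcal{R}$ with $Q$, and there are three main contributions. The first is $(\partial_bQ_b,Q) = (P,Q)+O(b^{100})$, thanks to the cutoff far left; this gives $c_1(b_s+2b^2)$ up to $|b_s|s^{-100}$. The second comes from $(\partial_y(5Q^4(F-r)),Q)$ via \eqref{estimates_on_scalar_prod_r_minus_F_and_Q}, which gives $-\frac12 c_0(\int Q)\lambda^{3/2}e^{-\hatsig/2}$. The third is $r_s(R,Q) = -\frac34(\int Q)r_s$. Here \eqref{estimate_on_r_s} replaces $r_s$ by $c_0\partial_s(\lambda^{1/2}e^{-\hatsig/2})$ up to $s^{-5/2}|\vec m|+s^{-7/2}$.

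We then check that these combine into
\begin{equation*}
c_1\lambda^2\partial_s\Big(\tfrac{4}{\int Q}c_0\lambda^{-3/2}e^{-\hatsig/2}\Big).
\end{equation*}
Writing $\lambda_s/\lambda=-b+O(|\vec m|)$ and $\hatsig_s = \lambda+O(\lambda|\vec m|)$, this expression equals $c_0\frac{\int Q}{4}\lambda^{1/2}e^{-\hatsig/2}\big(\frac32 b-\frac12\lambda\big)$ up to $s^{-1}|\vec m|$, and the right-hand side is expanded the same way to compare coefficients. The $(\Psi_b,Q)$ term is $O(b^3)$ by \eqref{projection_of_Psi_b_on_Q}. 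Quadratic terms such as $(rR+F)^2$ and $b\,r$ products are $O(s^{-3})$ after pairing with $Q$, or $O(s^{-1}|\vec m|)$ for the $\vec m$-terms. Finally, $(\vec M Q, Q)$ vanishes for the $\partial_y$ component, and $(\Lambda Q,Q)=0$, so the leading term contributes nothing.

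The coefficient check in the third contribution is delicate: the $b$-terms coming from the $R$-correction and from the $\partial_y F$ interaction must combine into exactly the derivative appearing in the statement. I would verify it first, noting that $\frac{4}{\int Q}c_1=\frac{\int Q}{4}$.
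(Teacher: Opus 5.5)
Your decomposition of $\mathcal{E}(W)$ matches the paper's. You even keep the $-\frac{\lambda^2}{4}W_y$ term, which is harmless. The $L^2_B$ bounds go through as you describe, and the coefficient check you call delicate in fact closes exactly using only $r_s(R,Q)$ and $(\partial_y(5Q^4(F-r)),Q)$. A small slip: $r\partial_y(R''-R+5Q^4R)=-5r\partial_y(Q^4)$, not $0$, although your final term $\partial_y(5Q^4(F-r))$ is correct because that piece is absorbed there.

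The genuine gap is the claim that the $b\,r$ products are $O(s^{-3})$ after pairing with $Q$. Since $b\sim s^{-1}$ and $r\sim s^{-1}$, the two terms
\[
br\,(\Lambda R,Q)\qquad\text{and}\qquad \big(5\partial_y[(Q_b^4-Q^4)(rR+F)],Q\big)=-5\big((Q_b^4-Q^4)(rR+F),Q'\big)
\]
are each of size $|br|\sim s^{-2}$. The constant $(\Lambda R,Q)=-(R,\Lambda Q)$ does not vanish; numerically it is about $0.2$. For the second term, write $Q_b^4-Q^4=4bQ^3P_b+O(b^2)$ and $rR+F=r(1+R)+(F-r)$ with $|F-r|\lesssim(1+y^2)s^{-2}$; it then equals $-20br\,(Q^3(1+R)P_b,Q')+O(s^{-3})$. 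An $O(s^{-2})$ error has the same order as the main terms in \eqref{estimate_on_scalar_prod_R_Q}. So that estimate would be lost, and with it the bound $\lambda^2|\partial_s g|\lesssim s^{-3}+\dots$ used to control $g$.

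The missing idea is the cancellation between these two terms. Differentiating $(\mathcal{L}P)'=\Lambda Q$ gives $\mathcal{L}(P')=\Lambda Q+20Q^3Q'P$. Then $\mathcal{L}R=5Q^4$ and an integration by parts yield
\[
(\Lambda R,Q)=-(R,\Lambda Q)=-(\mathcal{L}R,P')+20(RQ^3Q',P)=20\big(Q^3Q'(1+R),P\big).
\]
Hence the sum of the two terms is $20br\,(Q^3Q'(1+R),P-P_b)+O(s^{-3})$. Since $P-P_b$ is supported in $y\le -b^{-\gamma}$, where $Q$ is exponentially small, the sum is $O(s^{-3})$.

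A similar but easier point applies to the quadratic term $-(10Q^3(rR+F)^2,Q')$. Its naive size is $r^2\sim s^{-2}$. It is $O(s^{-3})$ only because $\int Q^3Q'(1+R)^2=0$, which follows from the parity of $R$ or from the equation of $R$ as in the paper. The remaining pieces involve $F-r$ or $\partial_yF$, which are $O(s^{-2})$ on the support of $Q$. You should state this reason rather than assert the bound.
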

\begin{proof}
The equation of $R$ and the definition of $W, Q_b, \Psi_b$ and of $\vec{m}, \vec{M}$ yields
\begin{equation}
    \mathcal{E}(W) = - \vec{m}\cdot\vec{M}\, Q+\mathcal{R},
\end{equation}
with 
\begin{equation}
    \begin{split}
        &\mathcal{R} = \mathcal{R}_1 + \partial_y \mathcal{R}_2 + \mathcal{R}_3 - \Psi_b,\\
        &\mathcal{R}_1 = r_s \,R + b\,r\, \Lambda R + 5 \partial_y \big[ Q^4_b(rR+F)-Q^4(rR+r) \big],\\
        & \mathcal{R}_2 = \big( Q_b + rR + F \big)^5 - Q^5_b - F^5 - 5 Q^4_b (rR+F),\\
        &\mathcal{R}_3 = (b_s+2b^2)\,\partial_b Q_b - b\, \vec{m}\cdot \vec{M} P_b - r \,\vec{m}\cdot \vec{M}R.
    \end{split}
\end{equation}

The projection of $\mathcal{R}$ on $Q$ is rewritten as follows
\begin{equation}
\begin{split}
    &\big( \mathcal{R}, Q\big) -c_1\,\bigg[  b_s+2b^2 +\lambda^2 \partial_s \Big( \frac{4}{\int Q} c_0 \lambda^{-\frac 32} e^{-\frac 12 \hatsig}\Big) \bigg]
    =  \big(\mathcal{R}_1, Q \big) - c_1 \,\lambda^2 \, \partial_s \Big(\frac{4}{\int Q}\,c_0\,\lambda^{-\frac 32}e^{-\frac 12 \hatsig} \Big)\\
    &+ \big(\partial_y \mathcal{R}_2, Q \big)
    + \big(\mathcal{R}_3, Q \big) - c_1\, \big(b_s+2b^2\big) - \big(\Psi_b, Q\big).
\end{split}
\end{equation}

We remark using the definition of $c_1$ that
\begin{equation}
    - c_1 \, \lambda^2\,\partial_s\Big(\frac{4}{\int Q}c_0 \lambda^{-\frac 32}e^{-\frac 12 \hatsig} \Big) = \frac 34 \Big( \int Q \Big) \,\partial_s \Big( c_0\lambda^{\frac 12 } e^{-\frac 12 \hatsig}\Big) + \frac{c_0}{2}\Big(\int Q \Big)\lambda^{\frac 12}\, \hatsig_s\, e^{-\frac 12 \hatsig}
\end{equation}
and the following rewriting holds true for $\phi \in \mathcal{Y}$
\begin{equation}\label{proof_estim_of_mathcal_R_rewriting_in_Y}
    5 \big( \partial_y \big[ Q^4_b(rR+ F) - Q^4(rR+r) \big], \phi \big) = -5\big( (Q^4_b - Q^4)(rR+F), \phi'\big) - 5 \big( Q^4(F-r), \phi' \big).
\end{equation}

Therefore, using the above relation for $\phi = Q$, we get
\begin{equation}
    \begin{split}
        &\bigg| \big( \mathcal{R}_1, Q \big) - c_1\, \lambda^2\partial_s \Big(\frac{4}{\int Q}c_0 \lambda^{-\frac32 } e^{-\frac 12 \hatsig} \Big) \bigg|
        \leq \big|r_s \big(\mathcal{R}, Q \big) + \frac32 \Big( \int Q\Big) \partial_s \Big(c_0 \lambda^{\frac 12}e^{-\frac 12 \hatsig} \Big) \big|\\
        &+ \big| br \big( \Lambda R , Q\big) - 20 br \big( Q^3(R+1)P_b, Q' \big) \big|
        + \big| 5\big( (Q^4_b - Q^4)(rR+F), Q'\big) - 20br \big( Q^3(R+1)P_b,Q' \big) \big|\\
        &+ \big|\big( 5Q^4(F-r),Q' \big) - \frac{c_0}{2}\Big( \int Q\Big) \lambda^{\frac 12}\,\hatsig_s\, e^{-\frac 12 \hatsig}\big|
    \end{split}
\end{equation}
Since $(R, Q) = -\frac 34 \int Q$, by \eqref{estimate_on_r_s} we have 
\begin{equation}
    \big|r_s \big(\mathcal{R}, Q \big) + \frac34 \Big( \int Q\Big) \partial_s \Big(c_0 \lambda^{\frac 12}e^{-\frac 12 \hatsig} \Big) \big| \lesssim \big|r_s - \partial_s\big( c_0 \lambda^{\frac 12} e^{-\frac 12 \hatsig}\big)\big| \lesssim e^{-\frac 18 \tau(s)}\big( s^{-\frac 52} |\vec{m}| + s^{-\frac 72}\big).
\end{equation}

Since $\mathcal{L}R = 5Q^4$ and using the following relation
\begin{equation}
    \Lambda Q + 2PQ^3Q' = (\mathcal{L} P)' + 20 Q^3 Q'P = \mathcal{L}(P'),
\end{equation}
it holds
\begin{equation}
    br(\Lambda R, Q) = - br(R,Q) = -br(R, \mathcal{L}(P')) + 20br(R,PQ^3Q') = 20br(Q^3(R+1)P,Q').
\end{equation}
Therefore, $\supp(1-\chi_b)$ imply
\begin{equation}
    \big| br \big( \Lambda R , Q\big) - 20 br \big( Q^3(R+1)P_b, Q' \big) \big| \lesssim |br|\big|\big( Q^3(R+1)Q', P-P_b \big)\big| \lesssim s^{-2}\int^{-b^{-\gamma}}_{-\infty} e^{-|y|} \lesssim s^{-100}.
\end{equation}

Using the rewriting $rR+F = r(R+1) -(F-r)$, we get
\begin{equation}
   \big| 5 \big( (Q^4_b -Q^4)(rR+F), Q' \big) - 20br\big(Q^3(R+1)P_b, Q'\big) \big| \lesssim s^{-3}.
\end{equation}

By \eqref{estimates_on_scalar_prod_r_minus_F_and_Q} and $|\hatsig_s - \lambda| \lesssim \lambda|\vec{m}|$, we get
\begin{equation}
    \big|\big( 5Q^4(F-r),Q' \big) - \frac{c_0}{2}\Big( \int Q\Big) \lambda^{\frac 12}\,\hatsig_s\, e^{-\frac 12 \hatsig}\big| \lesssim s^{-3} + \lambda^{\frac 12 }e^{-\frac 12 \hatsig}|\hatsig_s - \lambda| \lesssim s^{-3} + s^{-2}|\vec{m}|.
\end{equation}
Combining the above estimates, it yields
\begin{equation}\label{proof_estimate_proj_R1_on_Q}
    \Big| \big(\mathcal{R}_1, Q \big) - c_1 \,\lambda^2 \, \partial_s \Big(\frac{4}{\int Q}\,c_0\,\lambda^{-\frac 32}e^{-\frac 12 \hatsig} \Big)\Big| \lesssim  s^{-3} + s^{-2}|\vec{m}|.
\end{equation}

In order to estimate the projection of $\mathcal{R}_2$ on $Q'$, we use the following rewriting
\begin{equation}
    \mathcal{R}_2 = 10Q^3(rR+F)^2 + 10(Q^3_b -Q^3)(rR+F)^2 + 10Q^2_b(rR+F)^3 + 5Q_b(rR+F)^4 + (rR+F)^5 - F^5.
\end{equation}
The first term in this rewriting is developed as follows
\begin{equation}
    \big( 10Q^3(rR+F)^2, Q' \big) = -5r^2\big( (R+1)\,R', Q^4\big) - 5r \big( (F-r)\,R', Q^4 \big) - 5r\big( R\,\partial_y F, Q^4\big) - 5 \big( F\,\partial_y F, Q^4 \big).
\end{equation}
The equation of $R$ in \eqref{properties_of_R} yields
\begin{equation}
     5\big( (R+1)\,R', Q^4\big) = \big( R-R'', R' \big) =0 .
\end{equation}
Combining the above rewritings with \eqref{estimate_on_r_minus_F}, \eqref{estimates_on_norms_L_infty_of_F_with_exp} and \eqref{estimates_on_r}, we have
\begin{equation}\label{proof_estimate_proj_dR2_on_Q}
    \big|\big( \mathcal{R}_2, Q' \big)\big| \leq \big|\big( 10Q^3(rR+F)^2, Q'\big)\big| + Cs^{-3} \lesssim s^{-3}.
\end{equation}

For the estimate of $\mathcal{R}_3$ in the direction of $Q$,
by $(P,Q)= c_1 $ and the definition of $\chi_b$, we have
\begin{equation}
    |b_s + b^2| \big| \big( \partial_b Q_b, Q\big) - c_1 \big| \lesssim |b_s + b^2| \big|(P,Q) - \big((\chi_b -1)P, Q\big) + \gamma \big( y P \chi'_b, Q\big) - c_1\big| \lesssim |b_s+b^2|e^{-cs^{\gamma}}.
\end{equation}
Therefore
\begin{equation}\label{proof_estimate_proj_R3_on_Q}
\begin{split}
    &\big|\big( \mathcal{R}_3, Q \big) - c_1 (b_s + 2b^2) \big| \lesssim |b_s + b^2| \big| \big( \partial_b Q_b, Q\big) - c_1 \big| + s^{-1}|\vec{m}| \lesssim |b_s|s^{-100} + s^{-100} + s^{-1}|\vec{m}|.
\end{split}
\end{equation}
Combining \eqref{proof_estimate_proj_R1_on_Q}, \eqref{proof_estimate_proj_dR2_on_Q}, \eqref{proof_estimate_proj_R3_on_Q} and the projection of $\Psi_b$ on $Q$ in \eqref{projection_of_Psi_b_on_Q}, we get \eqref{estimate_on_scalar_prod_R_Q}.

The definition of $\chi$ in \eqref{def_of_chi}, the weighted norm of $\Psi_b$ in \eqref{projection_Psi_b_on_Y_and_norm_L2B} and the decay properties of $P$ on the right and of $Q_b$ yields
\begin{equation}
    \|\mathcal{R}_1\|_{L^2_B} + \|\mathcal{R}_2\|_{L^2_B} +  \|\partial_y\mathcal{R}_1\|_{L^2_B}+ \|\partial_y \mathcal{R}_2\|_{L^2_B}\lesssim s^{-2}
\end{equation}
and 
\begin{equation}
    \|\mathcal{R}_3\|_{L^2_B} + \|\partial_y \mathcal{R}_3\|_{L^2_B} \lesssim |b_s|+ s^{-2} + s^{-1}|\vec{m}|.
\end{equation}
Thus, we get \eqref{estimate_on_norms_L_2_B_of_R}.

\end{proof}

\begin{lemma}[Estimates on $h_s$ and $g_s$]\label{lemma_with_estimates_on_hs_and_gs}
For $s \geq s_0 $, it holds
\begin{equation}\label{estimate_on_gs_with_prod_scal_R_Q}
    \big| \big(\mathcal{R},Q\big)- c_1 \lambda^2 \partial_s g \big| \lesssim |b_s|s^{-100}+s^{-3} + s^{-1}|\vec{m}|,
\end{equation}
\begin{equation}\label{estimate_on_hs_with_g}
    \big\lvert \lambda^{-\frac{1}{2}}\partial_s h + \frac{1}{2}\lambda^2 g \big\rvert \lesssim |\vec{m}|.
\end{equation}
\end{lemma}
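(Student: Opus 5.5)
The first estimate \eqref{estimate_on_gs_with_prod_scal_R_Q} is a direct differentiation of $g$ combined with Lemma \ref{lemma_on_EW_and_estimate_of_error}. Write
\[
\partial_s g = \frac{b_s}{\lambda^2} - 2\frac{b}{\lambda^2}\frac{\lambda_s}{\lambda} + \partial_s\Big(\frac{4}{\int Q}c_0\lambda^{-\frac32}e^{-\frac12\hatsig}\Big),
\]
so that
\[
c_1\lambda^2\partial_s g = c_1\Big[b_s + 2b^2 + \lambda^2\partial_s\Big(\frac{4}{\int Q}c_0\lambda^{-\frac32}e^{-\frac12\hatsig}\Big)\Big] - 2c_1 b\Big(\frac{\lambda_s}{\lambda}+b\Big).
\]
The last term is bounded by $b|\vec{m}|\lesssim s^{-1}|\vec{m}|$ thanks to \eqref{conseq_of_BS1_strict_ineq}. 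Subtracting this from $(\mathcal{R},Q)$ and invoking \eqref{estimate_on_scalar_prod_R_Q} gives \eqref{estimate_on_gs_with_prod_scal_R_Q} directly.

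For \eqref{estimate_on_hs_with_g}, differentiate $h$:
\[
\partial_s h = \frac12\lambda^{\frac12}\frac{\lambda_s}{\lambda} - \frac{2}{\int Q}c_0\,\hatsig_s\,e^{-\frac12\hatsig}.
\]
Substitute $\frac{\lambda_s}{\lambda} = -b + m_1$ and $\hatsig_s = \lambda + \lambda m_2$, where $m_1,m_2$ are the components of $\vec m$. After multiplying by $\lambda^{-\frac12}$, the main part is
\[
-\frac12 b - \frac{2}{\int Q}c_0\lambda^{\frac12}e^{-\frac12\hatsig} = -\frac12\lambda^2\Big(\frac{b}{\lambda^2} + \frac{4}{\int Q}c_0\lambda^{-\frac32}e^{-\frac12\hatsig}\Big) = -\frac12\lambda^2 g,
\]
which is an exact algebraic identity. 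The remainder is
\[
\frac12 m_1 - \frac{2}{\int Q}c_0\lambda^{\frac12}e^{-\frac12\hatsig}m_2.
\]
By \eqref{conseq_of_BS1_strict_ineq}, $\lambda^{\frac12}e^{-\frac12\hatsig}\lesssim s^{-1}$, so this remainder is $\lesssim|\vec m|$.

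There is no real obstacle in either estimate. The only points needing care are the sign and coefficient bookkeeping in the identity $-\frac12 b - \frac{2}{\int Q}c_0\lambda^{\frac12}e^{-\frac12\hatsig} = -\frac12\lambda^2 g$, and checking that the $-2c_1 b\,m_1$ correction in the first estimate is absorbed into the $s^{-1}|\vec m|$ term already present in \eqref{estimate_on_scalar_prod_R_Q}.
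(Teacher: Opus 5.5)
Your proposal is correct and is essentially the paper's proof. For the first estimate, the paper likewise combines the identity $\lambda^2\partial_s g = b_s+2b^2+\lambda^2\partial_s\bigl(\frac{4}{\int Q}c_0\lambda^{-\frac32}e^{-\frac12\hatsig}\bigr)-2b\bigl(\frac{\lambda_s}{\lambda}+b\bigr)$ with \eqref{estimate_on_scalar_prod_R_Q}. For the second, it uses the exact identity $\lambda^{-\frac12}\partial_s h+\frac12\lambda^2 g=\frac12\bigl(\frac{\lambda_s}{\lambda}+b\bigr)-\frac{2}{\int Q}c_0\lambda^{\frac12}e^{-\frac12\hatsig}\bigl(\frac{\hatsig_s}{\lambda}-1\bigr)$ together with \eqref{BS1}, exactly as you do.
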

\begin{proof}

Combining 
\begin{equation}\label{rewriting_lambda_2_g_s}
\lambda^2 \partial_s g = b_s + 2b^2 + \lambda^2 \partial_s\Big( \frac{4}{\int Q}c_0 \lambda^{-\frac 32}e^{-\frac 12 \hatsig} \Big) - 2b\Big( \frac{\lambda_s}{\lambda}+b \Big).
\end{equation}
with \eqref{estimate_on_scalar_prod_R_Q} we get the estimate \eqref{estimate_on_gs_with_prod_scal_R_Q}.

Moreover,
\begin{equation}
    \lambda^{-\frac 12} \partial_s h +\frac 12 \lambda^2 g = \frac 12\Big( \frac{\lambda_s}{\lambda} + b \Big) - \frac{2}{\int Q}c_0 \,\lambda^{\frac 12} e^{-\frac 12 \hatsig} \Big(\frac{\hatsig_s}{\lambda} - 1 \Big).
\end{equation}
combined with \eqref{BS1} gives \eqref{estimate_on_hs_with_g}.

\end{proof}

\subsection{Modulation and parameters estimates}

Let $U $ be a solution of \eqref{gKdV_principal_eq} defined on time interval $I$ included in $[0,T)$. Consider $s_0 >1$ to be chosen.
For the function $f$ introduced in \eqref{equation_of_f}, define
\begin{equation}
    v(t,x) = U(t,x) - f(t,x),
\end{equation}
which is said to be \textit{close to the approximate blow up profile}, if for all $t \in I$ there exist $(\lambda_{\sharp}(t), \sigma_{\sharp}(t))\in (0,+\infty)^2$, a universal small enough constant $\alpha^*>0$ and $z \in C(I,L^2(\RR))$ such that
\begin{equation}\label{supposed_decomposition_around_Q}
\begin{split}
    v(t,x)=\lambda^{-\frac{1}{2}}_{\sharp}(t)(Q+r_{\sharp}R+z)(t,y),\quad
     y= \frac{x-\sigma_{\sharp}(t)}{\lambda_{\sharp}(t)},\quad  r_{\sharp}(t)= \lambda^{\frac{1}{2}}_{\sharp}(t)f(t,\sigma_{\sharp}(t)).
\end{split}
\end{equation}
and for all $t \in I$, it holds
\begin{equation}\label{supposed_decomposition_around_Q_condition}
    \|z(t)\|_{H^1}+ \sigma_{\sharp}^{-1}(t)+\lambda_{\sharp}(t ) \leq \alpha^{*}.\\
\end{equation}


\begin{lemma}\label{lemma_on_decomposition_around_Q}
    Assume $v$ is close to the approximate blow up profile for some $\alpha^{*}>0$ sufficiently small. There exist $s^*>s_0$ and unique continuous functions $(\lambda,\sigma,b):[s_0,s^*]\mapsto (0,+\infty) \times \RR^2$ such that
    \begin{equation}\label{decomposition_of_v_in_lemma}
        \lambda^{\frac{1}{2}}(s)v(\tau(s), \lambda(s)y+\sigma(s)) = W(s,y)+ \vare(s,y),
    \end{equation}
    where $W$ is defined in \eqref{def_of_W_and_r} and $\vare$ satisfies for all $s\in [s_0,s^*]$
    \begin{equation}\label{orthogonal_conditions_in_lemma}
        (\vare(s),y\Lambda Q ) = (\vare(s),\Lambda Q) = (\vare(s), Q) = 0.
    \end{equation}
    Moreover, for all $s \in [s_0,s^*]$ it holds
    \begin{equation}
        \bigg\lvert \frac{\lambda(s)}{\lambda_{\sharp}(\tau(s))} - 1\bigg\rvert + |b(s)| + \bigg\lvert \frac{\sigma(s)-\sigma_{\sharp}(\tau(s))}{\lambda_{\sharp}(\tau(s))} \bigg\rvert \lesssim \|z(\tau(s))\|_{L^2_{sol}}
    \end{equation}
    and
    \begin{equation}\label{estimates_on_norms_of_vares_in_z_in_decomp}
        \|\vare(s)\|_{L^2} \lesssim \|z(\tau(s))\|^{\frac 58}_{L^2}, \qquad \|\vare(s)\|_{L^2_{sol}} \lesssim \|z(\tau(s))\|_{L^2_{sol}},
    \end{equation}
    \begin{equation}\label{estimates_on_norms_of_partial_vares_in_z_in_decomp}
        \|\partial_y \vare(s)\|_{L^2} \lesssim \|z(\tau(s))\|_{L^2_{sol}} + \|\partial_y z(\tau(s))\|_{L^2},  \qquad \|\partial_y \vare(s)\|_{L^2_{sol}} \lesssim \|z(\tau(s))\|_{L^2_{sol}}+\|\partial_y z(\tau(s))\|_{L^2_{sol}}.
    \end{equation}
\end{lemma}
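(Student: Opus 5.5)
This is a standard modulation lemma. I would prove it with the implicit function theorem applied at each fixed time, followed by a continuity argument in time. The first step works at a fixed $t\in I$ with the rescaled function $w(y)=\lambda_\sharp^{1/2}v(t,\lambda_\sharp y+\sigma_\sharp)=Q+r_\sharp R+z$. For parameters $(\lambda_1,\sigma_1,b)$ near $(1,0,0)$, define
\begin{equation*}
\vare_{\lambda_1,\sigma_1,b}(y)=\lambda_1^{\frac12}w(\lambda_1 y+\sigma_1)-Q_b(y)-r(\lambda_1,\sigma_1)R(y),
\end{equation*}
where $r(\lambda_1,\sigma_1)=(\lambda_\sharp\lambda_1)^{\frac12}f(t,\sigma_\sharp+\lambda_\sharp\sigma_1)$ is the value of $r$ dictated by \eqref{def_of_W_and_r}. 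I would then consider the map
\begin{equation*}
\Phi(\lambda_1,\sigma_1,b,z)=\big((\vare,y\Lambda Q),(\vare,\Lambda Q),(\vare,Q)\big).
\end{equation*}
At $z=0$ and $(\lambda_1,\sigma_1,b)=(1,0,0)$ the residual is $r_\sharp R-r(1,0)R=0$, so $\Phi=0$.

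The Jacobian in $(\lambda_1,\sigma_1,b)$ at this point has columns $\Lambda Q$, $Q'$ and $-P$, up to corrections. The terms coming from differentiating $r(\lambda_1,\sigma_1)R$ and $r_\sharp R$ are of size $\lambda_\sharp^{1/2}\|f\|_{W^{1,\infty}}\lesssim\delta(x_0^{-1})$, hence perturbative. The $b$-derivative of $Q_b$ differs from $P$ by an amount controlled by the cutoff $\chi_b$; this is harmless in the scalar products against the exponentially decaying functions $y\Lambda Q$, $\Lambda Q$, $Q$. The resulting $3\times3$ matrix has entries $(\Lambda Q,y\Lambda Q)$, $(Q',y\Lambda Q)$, $(P,y\Lambda Q)$, and so on. Using $(Q,\Lambda Q)=0$, parity ($Q'$ is odd, while $\Lambda Q$ and $y\Lambda Q$ behave under parity so that $(Q',\Lambda Q)=(Q',Q)=0$), $(\Lambda Q,\Lambda Q)\neq 0$, $(Q',y\Lambda Q)=-\|\Lambda Q\|^2\neq0$ and $(P,Q)=c_1\neq0$, I would check that the determinant is nonzero. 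This is exactly the classical nondegeneracy used in \cite{MMPI}, \cite{MMPIII}.

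One technical point is that $b\mapsto Q_b$ is only $C^1$ for $b>0$ because of $\chi(b^\gamma y)$. I would handle this as in \cite{MMPI}: work with $b\ge0$, or note that $\partial_bQ_b$ is continuous in the weighted spaces tested against $\mathcal Y$. The implicit function theorem then gives unique $(\lambda_1,\sigma_1,b)$ with $|\lambda_1-1|+|\sigma_1|+|b|\lesssim\|z\|_{L^2_{sol}}$. The bound involves only the $L^2_{sol}$ norm because $\Phi$ depends on $z$ only through scalar products with exponentially decaying functions. Setting $\lambda=\lambda_\sharp\lambda_1$ and $\sigma=\sigma_\sharp+\lambda_\sharp\sigma_1$ gives the stated parameter estimates.

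The second step concerns continuity in time and the passage to the variable $s$. Continuity of $t\mapsto z(t)$ in $L^2$, together with uniqueness in the implicit function theorem, yields continuous parameters. I would then define $s$ through \eqref{def_of_rescaled_time}, i.e.\ $ds/dt=\lambda^{-3}$, to obtain functions on $[s_0,s^*]$.

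The third step gives the bounds on $\vare$. I would write
\begin{equation*}
\vare=\big(\lambda_1^{\frac12}z(\lambda_1\cdot+\sigma_1)\big)+\big(\lambda_1^{\frac12}Q(\lambda_1\cdot+\sigma_1)-Q\big)-bP_b+\big(\text{differences of }rR\text{ terms}\big).
\end{equation*}
The $Q$-difference is $O(|\lambda_1-1|+|\sigma_1|)$ in every weighted norm, and the $rR$ differences are small, of the same order. The $L^2_{sol}$ and $\partial_y$ estimates \eqref{estimates_on_norms_of_partial_vares_in_z_in_decomp} follow at once, since $\|P'_b\|_{L^2}\lesssim1$.

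The main obstacle is the plain $L^2$ bound. Because $P$ tends to a nonzero constant as $y\to-\infty$, one only has $\|bP_b\|_{L^2}\lesssim b\cdot b^{-\gamma/2}=b^{1-\gamma/2}=b^{5/8}$ for $\gamma=\frac34$. Combining this with $b\lesssim\|z\|_{L^2_{sol}}\le\|z\|_{L^2}$ and $\|z\|_{L^2}\le\alpha^*<1$ gives $\|\vare\|_{L^2}\lesssim\|z\|_{L^2}+\|z\|_{L^2}^{5/8}\lesssim\|z\|_{L^2}^{5/8}$, which is where the exponent $\frac58$ in \eqref{estimates_on_norms_of_vares_in_z_in_decomp} comes from. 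The remaining work is to check that all $f$-dependent terms are absorbed using Lemma \ref{lemma_with_all_estimates_on_r_F_etc} and the smallness in $x_0$.
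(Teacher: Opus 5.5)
Your proposal is correct and follows the same route as the paper. The paper only sketches the argument (implicit function theorem on the three orthogonality conditions, then projection of the expression of $\vare$) and defers the details to \cite[Section III.4]{1M}, and your identification of $\|bP_b\|_{L^2}\lesssim b^{1-\gamma/2}=b^{5/8}$ as the source of the exponent $\frac58$ is exactly the point. Two small corrections: $(Q',y\Lambda Q)=+\|\Lambda Q\|_{L^2}^2$, which is harmless for nondegeneracy since the determinant is $\pm c_1\|\Lambda Q\|_{L^2}^4$; and for $\lambda_1>1$ the bound $\|\lambda_1^{1/2}z(\lambda_1\cdot+\sigma_1)\|_{L^2_{sol}}\lesssim\|z\|_{L^2_{sol}}$ does not follow from a bare change of variables, but needs a short H\"older interpolation with $\|z\|_{L^2}$ using $|\lambda_1-1|\lesssim\|z\|_{L^2_{sol}}$ (and likewise for $\partial_y z$).
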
  
\begin{proof}
Such a decomposition holds due to the orthogonality conditions on $\vare$ and the implicit function theorem. Control of the growth of parameters is achieved by projecting the expression of $\vare$ onto the orthogonal directions. This also yields control over the $H^1$ norm of $\vare$.
The proof is detailed in \cite[Section III.4]{1M}.
\end{proof}

\begin{lemma}[Equation of $\vare_s$]\label{Lemma_on_equation_of_vare}
The map $s\mapsto (\lambda(s),\sigma(s), b(s))$ is  $C^1$ and $\vare$ verifies the equation
    \begin{equation}\label{equation_of_eps_s}
        \partial_s \vare = \partial_y \Big[ -\partial^2_y\vare + \vare - \Big( (W+F+\vare)^5 - (W+F)^5 \Big) \Big] - \mathcal{E}(W)
        + \vec{m}\cdot\vec{M}\vare + \vec{n}\cdot\vec{M}\vare,
    \end{equation}
    where $\mathcal{E}(W)$ is defined in \eqref{def_of_error_term_mathcal_E} and the vectors $\vec{M}, \vec{m}, \vec{n}$ are defined in \eqref{def_of_vec_M_m_n}.
\end{lemma}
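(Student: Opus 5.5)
The plan is to derive \eqref{equation_of_eps_s} directly from the identity $\mathcal{E}(V)=0$, where $V=W+\vare$ by \eqref{decomposition_of_v_in_lemma}. The $C^1$ regularity of the parameters is handled separately by a standard regularization argument.

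\textbf{Regularity of the parameters.} I first assume $U_0\in H^3$, so that $U\in C([0,T),H^3)\cap C^1([0,T),L^2)$. In this case the map $s\mapsto \lambda^{\frac12}(s)v(\tau(s),\lambda(s)y+\sigma(s))$ is differentiable, jointly in $s$ and in the parameters, with values in $L^2$ against the Schwartz-class test functions $Q$, $\Lambda Q$, $y\Lambda Q$. The parameters $(\lambda,\sigma,b)$ are defined by the three orthogonality conditions \eqref{orthogonal_conditions_in_lemma} via the implicit function theorem, as in Lemma \ref{lemma_on_decomposition_around_Q}. The Jacobian of this system at $(\vare,b,r)=(0,0,0)$ involves the pairings $(\Lambda Q, y\Lambda Q)$, $(Q',y\Lambda Q)$, $(\Lambda Q,\Lambda Q)$, $(\Lambda Q,Q)=0$ and $(\partial_b Q_b,Q)\approx (P,Q)=c_1\neq 0$. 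I expect it to be invertible, with a perturbative correction of size $O(\alpha^*)$. The implicit function theorem then gives $C^1$ dependence of the parameters on $s$.

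For general $H^1$ data, I approximate $U_0$ by $H^3$ data. I then pass to the limit using continuous dependence for \eqref{gKdV_principal_eq} in $H^1$ together with the uniform invertibility of the modulation matrix. This shows that the parameters remain $C^1$ and that \eqref{equation_of_eps_s} holds in the distributional sense, i.e.\ in $C([s_0,s^*],H^{-2})$.

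\textbf{Derivation of the equation.} Since $U$ solves \eqref{gKdV_principal_eq} and $f$ solves \eqref{equation_of_f}, we have $\mathcal{E}(V)=0$ with $V=W+\vare$. I subtract $\mathcal{E}(W)$ from $\mathcal{E}(W+\vare)$. All terms of \eqref{def_of_error_term_mathcal_E} are linear in $V$ except the nonlinearity, so
\begin{equation*}
0=\mathcal{E}(W)+\partial_s\vare+\partial_y\Big(\partial_y^2\vare-\vare+(W+F+\vare)^5-(W+F)^5\Big)-\frac{\lambda_s}{\lambda}\Lambda\vare-\Big(\frac{\sigma_s}{\lambda}-1\Big)\partial_y\vare .
\end{equation*}
It remains to express the modulation coefficients through $\vec m$ and $\vec n$. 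By definition, $\frac{\lambda_s}{\lambda}=m_1-b$. Since $\sigma=\hatsig+\frac{\tau}{4}$ and $\tau_s=\lambda^3$ by \eqref{def_of_rescaled_time}, we get
\begin{equation*}
\frac{\sigma_s}{\lambda}-1=\frac{\hatsig_s}{\lambda}-1+\frac{\lambda^2}{4}=m_2+n_2 .
\end{equation*}
Hence
\begin{equation*}
\frac{\lambda_s}{\lambda}\Lambda\vare+\Big(\frac{\sigma_s}{\lambda}-1\Big)\partial_y\vare=(\vec m+\vec n)\cdot\vec M\vare .
\end{equation*}
Rearranging then gives exactly \eqref{equation_of_eps_s}.

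\textbf{Main obstacle.} The algebra above is immediate. The genuinely delicate point is the regularity step: one must check that the modulation matrix is uniformly nondegenerate under \eqref{supposed_decomposition_around_Q_condition}, using $(P,Q)=c_1>0$ from \eqref{relation_on_scalar_prod_P_and_Q}. One must also justify that the distributional equation passes to the limit in the density argument. For both points I would follow the proof detailed in \cite[Section III.4]{1M}.
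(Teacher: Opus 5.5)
Your proposal is correct. The derivation of \eqref{equation_of_eps_s} is exactly the ``standard calculation'' the paper refers to. You subtract $\mathcal{E}(W)$ from $\mathcal{E}(W+\vare)=0$, and the only non-obvious point is using $\tau_s=\lambda^3$ to write $\frac{\sigma_s}{\lambda}-1=\big(\frac{\hatsig_s}{\lambda}-1\big)+\frac{\lambda^2}{4}$. This is why $\vec n$ has second component $\frac{\lambda^2}{4}$, while its first component $-b$ simply cancels the $+b$ in $m_1$.

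Where you differ from the paper is the regularity step. The paper only points to Cauchy--Lipschitz theory, citing \cite[Lemma 2.7]{Combet-Martel-18}; the proof there is not reproduced, so the following is the standard form of that argument rather than something stated in the paper. One differentiates the three orthogonality conditions and inverts the resulting linear system in $(\lambda_s,\sigma_s,b_s)$. It is invertible for the reasons you list: $(P,Q)=c_1\neq0$ and $(Q',y\Lambda Q)=\|\Lambda Q\|_{L^2}^2$. This gives an ODE whose right-hand side involves $\vare$ only through pairings with smooth decaying functions. That ODE is meaningful and locally Lipschitz already for $H^1$ solutions, so no regularization is needed.

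Your implicit-function-plus-density route is an equally standard alternative. Two small remarks on it. First, the $H^3$ approximation can be dropped: for $H^1$ solutions, $\partial_t v\in C(H^{-2})$, so the pairings of $v$ with the rescaled Schwartz test functions are already jointly $C^1$ in time and parameters. Second, apply the implicit function theorem in the original time $t$ and only then pass to $s$ through $\frac{ds}{dt}=\lambda^{-3}$, since $\tau(s)$ is itself defined via $\lambda$.
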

\begin{proof}
The equation of $\vare$ is derived using standard calculations. The regularity of the parameters is derived from Cauchy–Lipschitz theory. For a similar argument, see \cite[Lemma 2.7]{Combet-Martel-18}.
\end{proof}

\vspace{0.3cm}
The following lemma states the estimates on $\vare$ induced by the conservation laws.
\begin{lemma}[Mass and energy estimates]
    Under the bootstrap assumption \eqref{BS1}, for $s_0 \gg 1$, the following estimates hold on $[s_0,s^*]$
    \begin{equation}\label{estimate_on_L2_norm_of_vare}
        \|\vare(s) \|^2_{L^2} \lesssim \Big|\|U_0\|^2_{L^2_x} - \|Q\|^2_{L^2}\Big| + s^{-1} + e^{-\frac 14 x_0},
    \end{equation}
    \begin{equation}\label{estimate_on_L2_norm_of_grad_vare_before_BS2}
        \|\partial_y \vare(s)\|^2_{L^2} \lesssim s^{-2} + \lambda^2 \big|E(U_0)\big| + \lambda^2|g(s)| + \|\vare\|^2_{L^2_{sol}}.
    \end{equation}
\end{lemma}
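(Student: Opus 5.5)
We use the two conservation laws, mass and energy, of $U$ and expand them around $W+F$ in rescaled variables. The key observation is that $\lambda^{\frac12}U(\tau(s),\lambda y+\sigma) = W+\vare+F$. Scaling invariance of the $L^2$ norm and the scaling law of the energy then give
\begin{equation*}
\int (W+F+\vare)^2 = \|U_0\|^2_{L^2}, \qquad E(W+F+\vare) = \lambda^2 E(U_0).
\end{equation*}

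\emph{Mass estimate.} We expand $\int (W+F+\vare)^2 = \int (W+F)^2 + 2\int (W+F)\vare + \int \vare^2$.
\begin{itemize}
\item By \eqref{estimate_on_mass_W_plus_F}, $\int (W+F)^2 = \int Q^2 + O(b+|r|) + O(s^{-2+\gamma}+e^{-\frac14 x_0})$. Since $b,|r|\lesssim s^{-1}$, this is $\int Q^2+O(s^{-1}+e^{-\frac14x_0})$.
\item For the cross term we use $(\vare,Q)=0$ from \eqref{orthogonal_conditions_in_lemma}, so $\int (W+F)\vare = \int (bP_b + rR + F)\vare$. The terms $\int rR\vare$ and $\int bP_b\vare$ are bounded by $s^{-1}\|\vare\|_{L^2}\|R\|_{L^2}$ and $b\|P_b\|_{L^2}\|\vare\|_{L^2}\lesssim b^{1-\gamma/2}\|\vare\|_{L^2}$; the latter uses $\|P_b\|_{L^2}\lesssim b^{-\gamma/2}$. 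Since $1-\gamma/2>\frac12$, Young's inequality gives at most $\frac14\|\vare\|^2_{L^2}+Cs^{-1}$. The term $\int F\vare$ is bounded by $\|F\|_{L^2}\|\vare\|_{L^2}\lesssim e^{-\frac18 x_0}\|\vare\|_{L^2}$ via \eqref{estimates_on_norms_L2_of_F}, so Young gives $\frac14\|\vare\|^2+Ce^{-\frac14x_0}$.
\end{itemize}
Absorbing the $\frac14\|\vare\|^2$ terms yields \eqref{estimate_on_L2_norm_of_vare}.

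\emph{Energy estimate.} We expand
\begin{equation*}
E(W+F+\vare)=E(W+F) + \int \partial_y(W+F)\partial_y\vare - \int (W+F)^5\vare + \frac12\int(\partial_y\vare)^2 - \frac16\int\big[(W+F+\vare)^6-(W+F)^6-6(W+F)^5\vare\big].
\end{equation*}
\begin{itemize}
\item $E(W+F) = -c_1\lambda^2 g + O(s^{-2})$ by \eqref{estimate_on_energy_W_plus_F}.
\item In the linear term we integrate by parts and use $-Q''-Q^5=-Q$ to write it as $-(Q,\vare) + $ contributions from $bP_b$, $rR$ and $F$. The main term vanishes by orthogonality. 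The $bP_b$ and $rR$ contributions are localized (recall $P$ decays on the right and $P_b'$ is supported where $P'$ is localized, up to the cut-off region where it is $O(b^{1+\gamma})$), so they are $O(s^{-1}\|\vare\|_{L^2_{sol}}+s^{-2})$, up to an $L^2$-tail piece controlled by $s^{-1-\gamma}\|\vare\|_{L^2}$. The $F$ contributions are $\int \partial_y F\partial_y\vare$, bounded by $\|\partial_yF\|_{L^2}\|\partial_y\vare\|_{L^2}\lesssim s^{-1}\|\partial_y\vare\|_{L^2}$, and $\int[(W+F)^5-Q^5]\vare$ handled through \eqref{estimates_on_norms_L_infty_of_F_with_exp} and localization by $Q^4$.
\item For the quadratic-and-higher nonlinear term, the quadratic part localized by $Q^4$ gives $\|\vare\|^2_{L^2_{sol}}$. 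The part with $F^4\vare^2$ is small since $\|F\|_{L^\infty}\lesssim s^{-\frac12}$. The pure powers $\int|\vare|^6$ and similar are bounded via Gagliardo–Nirenberg by $\|\vare\|^4_{L^2}\|\partial_y\vare\|^2_{L^2}$. Because $\|\vare\|_{L^2}$ is small (this is where the smallness $\alpha^*$ from Lemma \ref{lemma_on_decomposition_around_Q} enters), this is absorbed into $\frac12\|\partial_y\vare\|^2$.
\end{itemize}
Collecting everything gives
\begin{equation*}
\tfrac14\|\partial_y\vare\|^2_{L^2}\lesssim \lambda^2|E(U_0)|+\lambda^2|g|+s^{-2}+\|\vare\|^2_{L^2_{sol}}+s^{-1}\|\partial_y\vare\|_{L^2}+s^{-1}\|\vare\|_{L^2_{sol}},
\end{equation*}
and Young's inequality finishes the proof.

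\emph{Main obstacle.} The delicate step is the linear term in the energy. It must be reduced to localized quantities, using $(\vare,Q)=0$ together with the equations of $P$ and $R$. Otherwise a term like $b\int P_b\vare$, which has only $L^2$ (not $L^2_{sol}$) control on the left where $P_b\approx \frac12\int Q$, would produce $s^{-1}\|\vare\|_{L^2}$. That is too weak for an $s^{-2}$ bound. The route around this is to exploit the fact that derivatives of $P_b$ on the left are small or localized. Writing $\int(\partial_yW\partial_y\vare - W^5\vare)$, the undifferentiated $P_b$ only appears through $\int bP_b(1-5Q^4)\vare$-type combinations. These I would control by pairing with $\mathcal{L}$ and using the flatness of $P$ on the left, where $P-\frac12\int Q\in\mathcal{Y}$-like decay holds.
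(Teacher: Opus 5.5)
Your argument follows the paper's proof: mass and energy conservation in the rescaled variables, $(\vare,Q)=0$ together with $Q''+Q^5=Q$ to remove the $Q$-part of the linear term, Young's inequality, and absorption of $\int\vare^6$ via \eqref{gagliardo-nirenberg} and the smallness of $\|\vare\|_{L^2}$. Your mass estimate is the same as the paper's. The energy part is right in substance, but three pieces of bookkeeping are wrong and need fixing.

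First, the ``main obstacle'' does not exist. The energy has no $L^2$ term, so the linear part of $E(W+F+\vare)$ is $\int\partial_y(W+F)\,\partial_y\vare-\int(W+F)^5\vare$. An undifferentiated $bP_b$ therefore enters only through $(W+F)^5-Q^5$. There it appears in products carrying a factor $Q$ or $R$, which are localized, or in higher-order products such as $\int(bP_b)^5\vare\lesssim b^{5-\gamma/2}\|\vare\|_{L^2}$. No term $b\int P_b\vare$ and no combination $bP_b(1-5Q^4)$ occurs. Pairing with $\mathcal{L}$, which contains the identity, would actually create the very term you want to avoid.

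Second, the tail piece you record, $s^{-1-\gamma}\|\vare\|_{L^2}$, is not $O(s^{-2})$. By \eqref{estimate_on_L2_norm_of_vare}, $\|\vare\|_{L^2}$ is only small, not decaying in $s$. This term then silently disappears from your collected inequality. The correct count after integrating by parts goes as follows. The cut-off part of $bP_b''$ is $b^{1+2\gamma}P\chi''(b^\gamma y)$, of size $b^{1+2\gamma}$ on an interval of length $b^{-\gamma}$. This gives $b^{1+3\gamma/2}\|\vare\|_{L^2}\lesssim s^{-17/8}$. The paper avoids the issue by not integrating these terms by parts. It bounds $|b\int P_b'\partial_y\vare+r\int R'\partial_y\vare|\lesssim s^{-1}(\|P_b'\|_{L^2}+\|R'\|_{L^2})\|\partial_y\vare\|_{L^2}$, uses $\|P_b'\|_{L^2}\lesssim 1$, and concludes with Young.

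Third, $\int F^5\vare$ is not localized by $Q^4$, and \eqref{estimates_on_norms_L_infty_of_F_with_exp} grows in $y$, so that bound cannot be used globally. Similarly, for the mixed terms $\int|F|^k|\vare|^{6-k}$ with $k\leq 3$, the bound $\|F\|_{L^\infty}\lesssim s^{-1/2}$ alone does not give $s^{-2}$. Split instead $|F|^k|\vare|^{6-k}\lesssim F^6+\vare^6$. Then use $\int F^6\lesssim\|F\|_{L^2}^4\|\partial_yF\|_{L^2}^2\lesssim s^{-2}$, from \eqref{gagliardo-nirenberg} and \eqref{estimates_on_norms_L2_of_F}, as the paper does.

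With these corrections your proof closes exactly as in the paper.
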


\begin{proof}
Conservation of the mass gives
\begin{equation}
    \|U_0\|^2_{L^2_x} = \|U\|^2_{L^2_x} = \|W+F+\vare\|^2_{L^2_y} = \|W+F\|^2_{L^2_y}+\|\vare\|^2_{L^2} + 2 \int (W+F)\vare.
\end{equation}
Thus,
\begin{equation}
    \|\vare\|^2_{L^2}\leq \Big|\|U_0\|^2_{L^2_x} - \int Q^2 \Big| + \Big| \int (W+F)^2 - \int Q^2 \Big| + 2 \Big|\int (W+F)\vare \Big|.
\end{equation}
By \eqref{estimate_on_mass_W_plus_F}, we have
\begin{equation}
    \Big| \int (W+F)^2 - \int Q^2 \Big| \lesssim  s^{-1} + e^{-\frac 14 x_0}.
\end{equation}
The orthogonality condition $(\vare, Q) =0 $ and the Young inequality yields
\begin{equation}
    \begin{split}
        \Big| \int (W+F)\vare \Big| &\lesssim \big(b\|P_b\|_{L^2} + |r|\|R\|_{L^2} +\|F\|_{L^2}\big)\|\vare\|_{L^2}\\
        &\leq \frac{1}{2^{10}}\|\vare\|^2_{L^2} + Cs^{-2+\gamma} + Ce^{-\frac 14 x_0}.
    \end{split}
\end{equation}
Combining these estimates yields \eqref{estimate_on_L2_norm_of_vare}.

By energy conservation, it holds
\begin{equation}
\begin{split}
    \lambda^2 E(U_0) &= \lambda^2 E(U) = E(W+F+\vare)\\
    &=E(W+F) + \int \partial_y(W+F)\,\partial_y \vare + \frac12 \|\partial_y \vare\|^2_{L^2} - \frac 16 \int \big((W+F+\vare)^6 - (W+F)^6 \big).
\end{split}
\end{equation}
We rewrite the following term
\begin{equation}
    \int \partial_y(W+F)\,\partial_y \vare = -\int Q''\vare + b \int P'_b\,\partial_y \vare + r\int R' \,\partial_y \vare - \int \partial^2_y F\,\vare.
\end{equation}

Therefore
\begin{equation}
    \begin{split}
        \frac 12\|\partial_y \vare\|^2_{L^2} &\leq \lambda^2 \big|E(U_0)\big| + \Big| E(W+F) - \int (Q'' + Q^5)\vare + b\int P'_b\,\partial_y \vare + r\int R'\,\partial_y \vare\\
        &-\int (\partial^2_y F + F^5)\vare -\frac 16 \int \big((W+F+\vare)^6 -(W+F)^6 - 6Q^5\vare - 6 F^5 \vare\big)\Big|.
    \end{split}
\end{equation}
The term $\int \big(Q'' + Q^5\big) \vare  = 0$. 

By Young inequality, it holds
\begin{equation}
    \begin{split}
        \big| b\int P'_b\,\partial_y \vare + r\int R'\,\partial_y \vare \big| \lesssim  s^{-1}\big(\|P'_b\|_{L^2} + \|R'\|_{L^2} \big)\, \|\partial_y \vare\|_{L^2}
        \leq \frac{1}{2^{10}}\|\partial_y \vare\|^2_{L^2} + Cs^{-2}.
    \end{split}
\end{equation}

Holder inequality and Gagliardo-Nirenberg inequality \eqref{gagliardo-nirenberg} yield (using \eqref{estimates_on_norms_L2_of_F})
\begin{equation}
\begin{split}
    \big| \int (\partial^2_y F + F^5)\vare \big| &\lesssim \|\partial_y F\|_{L^2}\|\partial_y \vare\|_{L^2} + \int \vare^6 + \int F^6\\
    &\leq \frac{1}{2^{10}}\|\partial_y \vare\|^2_{L^2} + C \|\partial_y F\|^2_{L^2}\big( 1 + \|F\|^4_{L^2} \big) \leq \frac{1}{2^{10}}\|\partial_y \vare\|^2_{L^2} + C s^{-2} e^{-\frac 12 x_0}.
\end{split}
\end{equation}

Interpolating, using that $\int P_b \lesssim s^{\gamma } + 1$ with $ 0< \gamma < 1$ and the estimates \eqref{estimates_on_norms_L_infty_of_F}, \eqref{estimates_on_norms_L_infty_of_F_with_exp}, we get
\begin{equation}
\begin{split}
    &\int \big|(W+F+\vare)^6 - (W+F)^6 - 6Q^5 \vare - 6 F^5 \vare \big|\\
    &\lesssim \int |bP_b + rR |Q^4 |\vare| + \int |bP_b + rR|^5 |\vare| + \int W^4 |F \vare| + \int \vare^6\\
    &\lesssim s^{-2} + \|\vare\|^2_{L^2_{sol}}
    + \delta(\alpha^*)\|\partial_y \vare\|^2_{L^2}.
\end{split}
\end{equation}
Therefore, for $\alpha^*$ small enough, we have
\begin{equation}
    \|\partial_y \vare\|^2_{L^2} \lesssim s^{-2} + \lambda^2 \big|E(U_0)\big| + \big|E(W+F)\big| + \|\vare\|^2_{L^2_{sol}}.
\end{equation}
Combining with the estimate on $E(W+F)$ in \eqref{estimate_on_energy_W_plus_F} yields \eqref{estimate_on_L2_norm_of_grad_vare_before_BS2}.
\end{proof}

\begin{lemma}[Modulation estimates]\label{lemma_mod_estimates_after_decomp}
    Assume \eqref{BS1} and 
    \begin{equation}
    \sigma(s)> \frac{4\times110}{3}\tau(s) + 3\,x_0 \quad \text{for all } \,\, s\in [s_0,s^*].
    \end{equation}
    Then, for $s_0\gg1$, the following estimates hold on $[s_0,s^*]$
    \begin{equation}\label{estimate_on_m_mod_estim}
        |\vec{m}| \lesssim \| \vare \|_{L^2_{sol}} + s^{-2},
    \end{equation}
    \begin{equation}\label{estimate_on_bs_mod_estim}
        |b_s| \lesssim \|\vare\|^2_{L^2_{sol}} + s^{-2},
    \end{equation}
    \begin{equation}\label{estimate_on_gs_mod_estim}
        \lambda^2|\partial_s g| \lesssim s^{-3}+\|\vare \|^2_{L^2_{sol}} + s^{-1}\|\vare \|_{L^2_{sol}}.
    \end{equation}
\end{lemma}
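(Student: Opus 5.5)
We follow the standard route: project the equation of $\vare$ in Lemma \ref{Lemma_on_equation_of_vare} onto the three directions $y\Lambda Q$, $\Lambda Q$, $Q$ and use the orthogonality conditions \eqref{orthogonal_conditions_in_lemma}. Differentiating $(\vare, \phi)=0$ in $s$ for $\phi\in\{y\Lambda Q,\Lambda Q, Q\}$ gives $(\partial_s\vare,\phi)=0$. By \eqref{equation_rewriting_of_error}, this becomes
\begin{equation*}
0=\big(\partial_y[-\partial_y^2\vare+\vare-((W+F+\vare)^5-(W+F)^5)],\phi\big)+(\vec m\cdot\vec M Q,\phi)-(\mathcal R,\phi)+(\vec m\cdot\vec M\vare,\phi)+(\vec n\cdot\vec M\vare,\phi).
\end{equation*}
We integrate by parts so that all derivatives fall on $\phi\in\mathcal Y$. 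The linear part is then bounded by $\|\vare\|_{L^2_{sol}}$. The nonlinear part is bounded by $\|\vare\|_{L^2_{sol}}(s^{-1}+\|\vare\|_{H^1})$ plus quadratic terms. Here we split $(W+F+\vare)^5-(W+F)^5-5Q^4\vare$ and use \eqref{estimates_on_norms_L_infty_of_F_with_exp} together with the smallness $\|\vare\|_{H^1}\lesssim\delta(\alpha^*)$ from Lemma \ref{lemma_on_decomposition_around_Q}. The $\vec n$ term is $O(s^{-1}\|\vare\|_{L^2_{sol}})$, and the $\vec m\cdot\vec M\vare$ term is $O(|\vec m|\,\|\vare\|_{L^2_{sol}})$, which is absorbable.

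For $\phi=\Lambda Q$ and $\phi=y\Lambda Q$, the matrix $\big((\Lambda Q,\phi),(Q',\phi)\big)$ is invertible: $(\Lambda Q,y\Lambda Q)=0$, $(Q',\Lambda Q)=0$, $(\Lambda Q,\Lambda Q)\ne0$ and $(Q',y\Lambda Q)\neq 0$. For these two projections, $(\mathcal R,\phi)$ is controlled by \eqref{estimate_on_norms_L_2_B_of_R}, i.e.\ by $|b_s|+s^{-1}|\vec m|+s^{-2}$. This yields $|\vec m|\lesssim\|\vare\|_{L^2_{sol}}+|b_s|+s^{-2}$. One must also handle the second component: $\vec M$ involves $\partial_y$ paired with $\hatsig_s/\lambda-1$, while the $y$-variable uses $\sigma$. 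The discrepancy $\sigma_s-\hatsig_s=\lambda^3/4$ is exactly the $\vec n$ correction, so nothing extra appears.

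For $\phi=Q$, the key observation is that $(\vec m\cdot\vec M Q,Q)=0$, since $(\Lambda Q,Q)=(Q',Q)=0$. The linear term also vanishes: $(\partial_y(\mathcal L\vare),Q)=-(\vare,\mathcal L Q')=0$, using the kernel in Lemma \ref{lemma_prop_of_linear_operator}. So the $Q$-projection reads $(\mathcal R,Q)=O(\|\vare\|^2_{L^2_{sol}}+s^{-1}\|\vare\|_{L^2_{sol}}+|\vec m|\|\vare\|_{L^2_{sol}})$. Then \eqref{estimate_on_gs_with_prod_scal_R_Q} gives $c_1\lambda^2|\partial_s g|\lesssim |b_s|s^{-100}+s^{-3}+s^{-1}|\vec m|+\|\vare\|^2_{L^2_{sol}}+s^{-1}\|\vare\|_{L^2_{sol}}$.

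To close, we use \eqref{rewriting_lambda_2_g_s} to express $b_s$: $|b_s|\lesssim\lambda^2|\partial_s g|+b^2+\lambda^2|\partial_s(\lambda^{-3/2}e^{-\hatsig/2})|+b|\vec m|$. The term $b^2+\lambda^2\partial_s(\cdots)$ is $O(s^{-2})$ under \eqref{BS1}, since $\lambda^2\partial_s(\lambda^{-3/2}e^{-\hatsig/2})$ is $O(\lambda^{1/2}(b+|\vec m|)e^{-\hatsig/2}+\lambda^{3/2}e^{-\hatsig/2})=O(s^{-2}+s^{-1}|\vec m|)$. Inserting into the first bound gives $|\vec m|\lesssim\|\vare\|_{L^2_{sol}}+s^{-2}$, which is \eqref{estimate_on_m_mod_estim}. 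Feeding this back gives \eqref{estimate_on_gs_mod_estim}, where $s^{-1}|\vec m|$ is $\lesssim s^{-1}\|\vare\|_{L^2_{sol}}+s^{-3}$. Finally, \eqref{estimate_on_bs_mod_estim} follows from \eqref{rewriting_lambda_2_g_s} once more.

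The main obstacle is the sharp $Q$-projection. To keep the error at $s^{-3}$, we need the exact cancellation in \eqref{estimate_on_scalar_prod_R_Q} (already provided) together with the vanishing of $(\vec m\cdot\vec MQ,Q)$ and of the linear part. The nonlinear $\vare$ terms must also be shown to be genuinely quadratic, or $s^{-1}$ times linear, in $\|\vare\|_{L^2_{sol}}$, using the decay of $Q$ and the $s^{-1}$ size of $bP_b+rR+F$ near the soliton.
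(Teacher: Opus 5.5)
Your proposal is correct and follows the paper's proof essentially step by step. You differentiate the three orthogonality conditions and use $(\Lambda Q,y\Lambda Q)=(Q',\Lambda Q)=0$, $(Q',y\Lambda Q)=\|\Lambda Q\|_{L^2}^2$ to get $|\vec m|\lesssim\|\vare\|_{L^2_{sol}}+|b_s|+s^{-2}$, then kill the $\vec m\cdot\vec MQ$ and $\partial_y(\mathcal L\vare)$ contributions in the $Q$-direction, and finally close through \eqref{estimate_on_gs_with_prod_scal_R_Q} and \eqref{rewriting_lambda_2_g_s}, exactly as the paper does. (The term $|\vec m|\,\|\vare\|_{L^2_{sol}}$ that you drop when passing to the bound on $\lambda^2|\partial_s g|$ is harmless: it is absorbed once $|\vec m|\lesssim\|\vare\|_{L^2_{sol}}+s^{-2}$ is known.)
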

\begin{proof}
  In order to get these estimates, we differentiate in time the three orthogonality conditions in \eqref{orthogonal_conditions_in_lemma}.
  
  Since $(Q', \Lambda Q) = (\Lambda Q, y \Lambda Q)= 0$ and $(Q', y\Lambda Q) = \|\Lambda Q\|^2_{L^2}$, we get from the orthogonality conditions on $\Lambda Q$ and $y \Lambda Q$, that
  \begin{equation}
      \begin{split}
           - \Big( \frac{\lambda_s}{\lambda } + b\Big)\|\Lambda Q\|^2_{L^2}
          &= -\big( \vare, \mathcal{L}(\Lambda Q)'\big) +\big( (W+F+\vare)^5
          - (W+F)^5 - 5Q^4\vare, (\Lambda Q)' \big)\\
          &- \big(\mathcal{R}, \Lambda Q \big) + \big( \vec{m}\cdot\vec{M}\vare, \Lambda Q \big) + \big( \vec{n}\cdot \vec{M} \vare, \Lambda Q\big)
      \end{split}
  \end{equation}
    and
\begin{equation}
    \begin{split}
         - \Big( \frac{\hatsig_s}{\lambda } - 1\Big)\|\Lambda Q\|^2_{L^2}
        &= -\big(\vare, \mathcal{L}(y\Lambda Q)'  \big) + \big( (W+F+\vare)^5
        - (W+F)^5 - 5Q^4\vare, (y\Lambda Q)'\big)\\
        &- \big(\mathcal{R}, y\Lambda Q \big) + \big( \vec{m}\cdot\vec{M}\vare, y \Lambda Q\big) + \big( \vec{n}\cdot \vec{M} \vare, y\Lambda Q\big).
    \end{split}
\end{equation}

Since $\Lambda Q$, $y\Lambda Q$ and their derivatives belong to $\mathcal{Y}$, we estimate the terms for any $\phi \in\mathcal{Y}$.
It holds,
\begin{equation}
    \big| \big( \vare, \phi \big) \big| \lesssim \|\vare\|^2_{L^2_{sol}}.
\end{equation}

By interpolation and Lemma \ref{lemma_with_all_estimates_on_r_F_etc}, we get
\begin{equation}
   \big| \big((W+F+\vare)^5 - (W+F)^5 - 5Q^4\vare, \phi \big) \big| \lesssim s^{-1}\|\vare\|_{L^2_{sol}} + \|\vare\|^2_{L^2_{sol}} + \int |\vare|^5 \phi \lesssim s^{-1}\|\vare\|_{L^2_{sol}} + \|\vare\|^2_{L^2_{sol}},
\end{equation}
where the nonlinear term is estimated with $\|\vare\|_{L^{\infty}}$ and the weighted norm $\|\vare\|^2_{L^2_{sol}}$.

For such $\phi \in \mathcal{Y}$ it also holds
\begin{equation}
    \big|\big( \vec{m}\cdot\vec{M}\vare, \phi \big)\big| \lesssim |\vec{m}|\,\|\vare\|_{L^2_{sol}}\qquad \text{and} \qquad \big|\big( \vec{n}\cdot\vec{M}\vare, \phi\big)\big| \lesssim s^{-1}\|\vare\|_{L^2_{sol}}.
\end{equation}

Using the above inequalities and \eqref{estimate_on_norms_L_2_B_of_R}, we get
\begin{equation}\label{proof_diff_in_s_of_ortho_first_estim_on_m}
    |\vec{m}|\lesssim \|\vare\|_{L^2_{sol}} + |b_s| + s^{-2} + |\vec{m}|\big(s^{-1} + \|\vare\|_{L^2_{sol}}\big).
\end{equation}

The differentiation on $s$ of the orthogonality condition in the direction of $Q$ gives
\begin{equation}
\begin{split}
    &0 = \big( \partial_y (\mathcal{L}\vare), Q \big) + \big( (W+F+\vare)^5 - (W+F)^5 - 5Q^4\vare, Q'\big)- \big(\mathcal{E}(W), Q\big)\\
    &+ \big(\vec{m}\cdot\vec{M}\vare, Q\big) + \big(\vec{n}\cdot\vec{M}\vare, Q\big).
\end{split}
\end{equation}

Since $(\Lambda Q, Q) = (Q',Q) = 0$, we get by \eqref{equation_rewriting_of_error} $
\big(\mathcal{E}(W), Q \big) = \big( \mathcal{R}, Q\big)$.
By integration by parts and the orthogonality, we have $
\big(\Lambda \vare,Q  \big) = - \big( \vare, \Lambda Q\big) = 0$.
The kernel of the operator $\mathcal{L}$ yields
$\big( \partial_y(\mathcal{L}\vare) , Q\big) = -\big( \vare, \mathcal{L}Q' \big) = 0$.

Hence, we get
\begin{equation}
    \big( \mathcal{R}, Q\big) = \big( (W+F+\vare)^5 - (W+F)^5 - 5Q^4 \vare, Q' \big) - \Big(\frac{\hatsig_s}{\lambda}-1\Big)\big(\vare, Q'\big) - \frac{\lambda^2}{4}\big( \vare, Q' \big).
\end{equation}
Therefore 
\begin{equation}
    \big|\big(\mathcal{R}, Q \big)\big| \lesssim \|\vare\|_{L^2_{sol}}\big(s^{-1} + |\vec{m}|\big) + \|\vare\|^2_{L^2_{sol}}.
\end{equation}
Combining this estimate of $(\mathcal{R}, Q)$ with \eqref{estimate_on_gs_with_prod_scal_R_Q}, we have
\begin{equation}\label{proof_diff_in_s_of_ortho_first_estim_on_lambda_2_g_s}
    |\lambda^2 \partial_s g| \lesssim \|\vare\|_{L^2_{sol}}\big(s^{-1} + |\vec{m}|\big) + \|\vare\|^2_{L^2_{sol}} +s^{-3} + s^{-1}|\vec{m}| + |b_s|e^{-cs^{\gamma}}.
\end{equation}
The rewriting in \eqref{rewriting_lambda_2_g_s}
and the following estimate, coming from differentiation, 
\begin{equation}
    \Big| \lambda^2 \partial_s\Big(  \frac{4}{\int Q}c_0 \lambda^{-\frac 32}e^{-\frac 12 \hatsig}\Big) \Big| \lesssim s^{-2} +s^{-1}|\vec{m}|,
\end{equation}
it yields for $s_0 \gg 1$
\begin{equation}\label{proof_diff_in_s_of_ortho_first_estim_on_b_s}
    \begin{split}
        |b_s| \lesssim s^{-2} + |\vec{m}| \big( s^{-1} + \|\vare\|_{L^2_{sol}}\big) + \|\vare\|^2_{L^2_{sol}}.
    \end{split}
\end{equation}
Inserting this estimate in \eqref{proof_diff_in_s_of_ortho_first_estim_on_m}, for $s_0\gg1 $ and $\alpha^* \ll 1$ the estimate \eqref{estimate_on_m_mod_estim} holds true.
Finally, inserting this estimate \eqref{estimate_on_m_mod_estim} in \eqref{proof_diff_in_s_of_ortho_first_estim_on_b_s}, we have \eqref{estimate_on_bs_mod_estim}

Combining these estimates on $\vec{m}$ and $b_s$ with \eqref{proof_diff_in_s_of_ortho_first_estim_on_lambda_2_g_s} yields \eqref{estimate_on_gs_mod_estim}.
    
\end{proof}

\subsection{Bootstrap estimates}

Fix two $C^5(\RR)$ functions $\vphi$ and $\psi$ such that
\begin{equation}\label{definition_of_vphi_and_psi}
    \vphi(y)=\begin{cases}
        e^y, \quad y<-1,\\
        1+y, \quad -\frac{1}{2}<y<\frac{1}{2},\\
        y,  \quad y>2,
        \end{cases}
        \qquad  \psi(y) = \begin{cases}
        e^{2y}, \quad y<-1,\\
        1, \quad y>-\frac{1}{2},
    \end{cases}
\end{equation}
and 
\begin{equation}\label{condition_on_the_def_of_vphi_and_psi}
     \vphi'(y)>0, \quad \psi'(y)>0, \quad y \vphi'(y) \leq \vphi(y), \qquad \forall y \in \RR.
\end{equation}

Let $B>100$ to be fixed later. Define on $\RR$
\begin{equation}
    \vphi_B(y):= \vphi\Big(\frac{y}{B} \Big) \qquad \text{and}\qquad \psi_B(y):= \psi\Big(\frac{y}{B} \Big).
\end{equation}

For $s\in [s_0,s^*]$, we introduce a norm with weight
\begin{equation}
    \mathcal{N}_B(s):= \Big(\int(\partial_y \vare)^2(s,y)\psi_B(y) \,dy + \int\vare^2(s,y)\vphi_B(y) \,dy \Big)^{\frac{1}{2}}.
\end{equation}
From the definition \eqref{def_of_L2loc_L2B} of the $L^2_{loc}$-norm, it holds
\begin{equation}\label{relation_L2loc_with_N_B}
    \|\vare\|^2_{L^2_{sol}}+ \|\partial_y \vare\|^2_{L^2_{sol}} + \int \vphi'_B\,\vare^2  \lesssim \mathcal{N}^2_B
    \qquad\text{and}\qquad \|\vare\|^2_{L^2_{sol}} \lesssim B\int \vphi'_B \vare^2 .
\end{equation}

Set $j = \frac 52$ (other values of $j\in (2,3)$ are possible). In addition to \eqref{BS1}, we will work under the following assumptions 
\begin{equation}\tag{BS2}\label{BS2}
        \mathcal{N}^2_B(s) \;\leq\; s^{- j}\,\qquad \text{and} \qquad \|\vare(s)\|_{H^1} \;\leq\; \alpha^*\,\quad\text{for}\quad s \in [s_0,s^*].
\end{equation}

\begin{remark}
    In particular, it allows us to control the full $L^{\infty}$ norm of $\vare$ in terms of $\alpha^*$
    \begin{equation}\label{L_infty_norm_of_vare_control}
        \|\vare(s)\|_{L^{\infty}} \lesssim \|\vare(s)\|_{H^1} \lesssim \alpha^*.
    \end{equation}
\end{remark}


\begin{lemma}[Consequences of bootstrap assumptions]
Under the assumptions \eqref{BS1} and \eqref{BS2}, it holds on on $[s_0,s^*]$
\begin{equation}\label{conseq_of_BS_on_m_on_bs_and_on_gs}
    |\vec{m}| \lesssim s^{-\frac{5}{4}}, \qquad\qquad  |b_s|\lesssim s^{-2} \qquad,\qquad |\partial_s g| \lesssim s^{-\frac 14},
\end{equation}
\begin{equation}\label{conseq_of_BS_on_g_and_hs}
    \lambda^2 \,| g | \lesssim  s^{-\frac 54} \qquad \text{and}\qquad | \partial_s h | \lesssim s^{-\frac 74},
\end{equation}
\begin{equation}\label{estimate_on_L2_norm_of_grad_vare}
    \|\partial_y \vare(s)\|^2_{L^2} \lesssim \lambda^2 \big|E(U_0)\big| + s^{-\frac 54}.
\end{equation}
\end{lemma}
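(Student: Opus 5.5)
The plan is to feed the bootstrap bound \eqref{BS2} into the modulation estimates of Lemma \ref{lemma_mod_estimates_after_decomp}, then integrate the estimate on $\partial_s g$, and finally return to the energy estimate \eqref{estimate_on_L2_norm_of_grad_vare_before_BS2}. Throughout, by \eqref{relation_L2loc_with_N_B} and \eqref{BS2}, we have
$$\|\vare\|_{L^2_{sol}}^2\lesssim \mathcal N_B^2\le s^{-\frac52},$$
so that $\|\vare\|_{L^2_{sol}}\lesssim s^{-\frac54}$.

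\textbf{Step 1.} Insert this bound into \eqref{estimate_on_m_mod_estim}. This gives $|\vec m|\lesssim s^{-\frac54}+s^{-2}\lesssim s^{-\frac54}$. Similarly, \eqref{estimate_on_bs_mod_estim} gives $|b_s|\lesssim s^{-\frac52}+s^{-2}\lesssim s^{-2}$. From \eqref{estimate_on_gs_mod_estim} we get
$$\lambda^2|\partial_s g|\lesssim s^{-3}+s^{-\frac52}+s^{-1}s^{-\frac54}\lesssim s^{-\frac94}.$$
Dividing by $\lambda^2\gtrsim s^{-2}$, which holds by \eqref{conseq_of_BS1_strict_ineq}, yields $|\partial_s g|\lesssim s^{-\frac14}$. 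This proves \eqref{conseq_of_BS_on_m_on_bs_and_on_gs}.

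\textbf{Step 2.} This step bounds $\lambda^2|g|$ and is the one I expect to be delicate. Integrating $|\partial_s g|\lesssim s^{-1/4}$ directly would only give $|g(s)|\lesssim|g(s_0)|+s^{3/4}$. Then $\lambda^2|g|\lesssim s^{-2}|g(s_0)|+s^{-5/4}$, which is the claimed rate provided $|g(s_0)|\lesssim s_0^{3/4}$. This must be ensured by the choice of initial data: in the construction, $b(s_0),\lambda(s_0),\hatsig(s_0)$ are fixed so that $g(s_0)=0$ (consistent with $l_0=0$). If instead only \eqref{BS1} is available at $s_0$, one has a priori $|g|\lesssim s^{2}\cdot s^{-1-\rho}$. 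In that case I would rather use the smallness of $|\vec m|$ and the structure of $g$. Equivalently, one can write
$$\lambda^2 g = b+\tfrac{4c_0}{\int Q}\lambda^{1/2}e^{-\frac12\hatsig},$$
and combine $\lambda^2 g=-2\lambda^{-1/2}\partial_s h+O(|\vec m|)$ from \eqref{estimate_on_hs_with_g}. So I would first try to assume $g(s_0)=0$, integrate on $[s_0,s]$, and conclude $\lambda^2|g|\lesssim s^{-2}\int_{s_0}^s s'^{-1/4}ds'\lesssim s^{-5/4}$.

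\textbf{Step 3.} By \eqref{estimate_on_hs_with_g}, Step 1 and Step 2,
$$|\partial_s h|\lesssim \lambda^{\frac12}\big(\lambda^2|g|+|\vec m|\big)\lesssim s^{-\frac12}s^{-\frac54}=s^{-\frac74}.$$
This gives \eqref{conseq_of_BS_on_g_and_hs}.

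\textbf{Step 4.} Finally, plug $\lambda^2|g|\lesssim s^{-5/4}$ and $\|\vare\|_{L^2_{sol}}^2\lesssim s^{-5/2}$ into \eqref{estimate_on_L2_norm_of_grad_vare_before_BS2}. This gives
$$\|\partial_y\vare\|_{L^2}^2\lesssim s^{-2}+\lambda^2|E(U_0)|+s^{-\frac54}+s^{-\frac52},$$
which is \eqref{estimate_on_L2_norm_of_grad_vare}.
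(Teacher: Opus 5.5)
Your Steps 1, 3 and 4 are exactly the paper's proof. The paper inserts $\|\vare\|_{L^2_{sol}}^2\lesssim\mathcal N_B^2\le s^{-5/2}$ into Lemma~\ref{lemma_mod_estimates_after_decomp}, bounds $\partial_s h$ through \eqref{estimate_on_hs_with_g}, and bounds $\partial_y\vare$ through \eqref{estimate_on_L2_norm_of_grad_vare_before_BS2}. Your Step 2 appears in the paper only as the phrase ``integrating the estimate on $\partial_s g$''.

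\textbf{The gap in Step 2, shared with the paper.} Your concern is legitimate. Integration only gives $\lambda^2|g(s)|\lesssim s^{-2}|g(s_0)|+s^{-5/4}$, and nothing in \eqref{BS1}--\eqref{BS2} controls $g(s_0)$. Indeed, \eqref{BS2} constrains only $\vare$, while at $s=s_0$ \eqref{BS1} gives only $\lambda^2|g|=|b-\lambda^{1/2}e^{-\frac12\hatsig}|\lesssim s_0^{-1-\rho}$. So the bound $\lambda^2|g|\lesssim s^{-5/4}$, and with it $|\partial_s h|\lesssim s^{-7/4}$, cannot be derived from \eqref{BS1}--\eqref{BS2} alone. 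One needs the extra hypothesis $|g(s_0)|\lesssim s_0^{3/4}$.

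\textbf{Your justification for $g(s_0)=0$ is not what the paper does.} You say the construction fixes $g(s_0)=0$, but Proposition~\ref{Prop_on_blow_up} does not impose this. There, $\hatsig_0$ and $b_0$ are chosen independently within \eqref{init_cond_hatsig_b}, and $\lambda_0$ is the shooting parameter. This yields only $\lambda_0^2|g(s_0)|\lesssim s_0^{-1-4\rho}$, i.e.\ $|g(s_0)|\lesssim s_0^{1-4\rho}$. That is much larger than $s_0^{3/4}$, because $\rho<\frac1{16}$.

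\textbf{How to fix it.} State the hypothesis on $g(s_0)$ explicitly. Then note that it can be arranged by taking $b_0=\lambda_0^{1/2}e^{-\frac12\hatsig_0}$, which is the $l_0=0$ choice of the heuristics. This choice still satisfies $|b_0-s_0^{-1}|\lesssim s_0^{-1-4\rho}$, hence \eqref{BS1} at $s_0$.

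\textbf{Drop the fallback argument.} Your alternative via $\lambda^2 g=-2\lambda^{-1/2}\partial_s h+O(|\vec m|)$ is circular. In this lemma $\partial_s h$ is bounded using $\lambda^2 g$, not the other way round.
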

\begin{proof}
Lemma \ref{lemma_mod_estimates_after_decomp}, \eqref{relation_L2loc_with_N_B}, \eqref{BS1} and \eqref{BS2} yields \eqref{conseq_of_BS_on_m_on_bs_and_on_gs}.

Integrating the estimate on $\partial_s g$ in \eqref{conseq_of_BS_on_m_on_bs_and_on_gs}, we get the first estimate in \eqref{conseq_of_BS_on_g_and_hs}.

By \eqref{estimate_on_hs_with_g} and \eqref{conseq_of_BS_on_m_on_bs_and_on_gs}, we have
\begin{equation}
   |\partial_s h | \lesssim \lambda^{\frac 12}\big( \lambda^2 |g | + |\vec{m}| \big) \lesssim s^{-\frac 74}.
\end{equation}
The estimate \eqref{estimate_on_L2_norm_of_grad_vare_before_BS2} combined with \eqref{BS1},\eqref{BS2} yields \eqref{estimate_on_L2_norm_of_grad_vare}.

\end{proof}

%% file: Energy_estimates.tex
\section{Energy estimates}\label{section_energy_estimates}
We work in the general context of Lemma \ref{lemma_on_decomposition_around_Q} with 
$(\lambda,\sigma, b, \vare)$ satisfying \eqref{BS1}-\eqref{BS2} on some time interval $[s_0,s^*]$ with $s^*\geq s_0$.

We define the mixed energy-virial functional as
    \begin{equation*}
        \mathcal{F} \;=\; \int \Big[ (\partial_y \vare )^2 \psi_B + \vare^2 \varphi_B - \frac{1}{3}\Big( (W+F+\vare)^6-(W+F)^6-6(W+F)^5\vare \Big) \psi_B \Big]\, dy.
    \end{equation*}
(see \cite[Section IV]{1M})

\vspace{0.2cm}
\begin{proposition}(Monotonicity formula on $\mathcal{F}$)\label{Proposition_Monot_formula_on_F}\\ Let $j = \frac 52$. The following estimates hold on $[s_0,s^*]:$
\begin{itemize}
    \item Lyapounov control : There exists $0<\mu<1$ and $C>0$ such that
    \begin{equation}\label{Lyapounov_control_of_F}
    \begin{split}
        s^{-j}\frac{d}{ds}\big[ s^{j}\,\mathcal{F}\big] + \frac{\mu}{8}\int \vphi'_B \big(\vare^2 + (\partial_y \vare)^2\big)  \leq 8 \, \,s^{-1}\int \vphi_B\vare^2  + CB^2\,s^{-4}.
    \end{split}
    \end{equation}
    \item Coercivity of $\mathcal{F} $ :  
    \begin{equation}\label{coercivity_of_mathcal_F}
        \mathcal{F}\;\gtrsim\;\mathcal{N}^2_B.
    \end{equation}
\end{itemize}
\end{proposition}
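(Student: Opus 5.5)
The plan follows the energy-virial computation of \cite[Section IV]{1M} and \cite{MMPIII}, with the polynomial tail replaced by $F$ and the parameter estimates of Lemma \ref{lemma_with_all_estimates_on_r_F_etc}. We split $\mathcal{F}$ as $\mathcal{F}_1+\mathcal{F}_2$, where $\mathcal{F}_1=\int (\partial_y\vare)^2\psi_B+\vare^2\vphi_B - \tfrac13\int((\vare+W+F)^6-(W+F)^6-6(W+F)^5\vare)\psi_B$. We then compute $\frac{d}{ds}\mathcal{F}$ by inserting the equation \eqref{equation_of_eps_s} of Lemma \ref{Lemma_on_equation_of_vare}. Writing $\frac{d}{ds}\mathcal{F} = 2\int \partial_s\vare\,\big(-\partial_y(\psi_B\partial_y\vare)+\vare\vphi_B-\psi_B((W+F+\vare)^5-(W+F)^5)\big) + (\text{terms from }\partial_s(W+F))$, we organise the contributions in the usual way:
\begin{itemize}
\item (i) the linear Airy part, which after integration by parts produces the virial quadratic form $-\int[3\psi'_B(\partial_y\vare)^2+(\vphi'_B-\vphi'''_B)\vare^2 - 10\,\psi'_B Q^4\vare^2 ...]$ plus the localized term $(\vare,\mathcal{L}\ldots)$;
\item (ii) the scaling and translation terms $\vec m\cdot\vec M\vare$ and $\vec n\cdot\vec M\vare$;
\item (iii) the error term $\mathcal{E}(W)=-\vec m\cdot\vec M Q+\mathcal R$;
\item (iv) terms where $\partial_s$ falls on $W+F$.
\end{itemize}

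For (i), we will invoke the standard virial coercivity: under the orthogonality conditions \eqref{orthogonal_conditions_in_lemma} and for $B$ large, the quadratic form is bounded below by $\mu\int\vphi'_B(\vare^2+(\partial_y\vare)^2)$. This follows from Lemma \ref{lemma_prop_of_linear_operator}(3) applied to $\vare\sqrt{\vphi'_B}$, as in \cite{MMPI}. The nonlinear and $F$-dependent corrections are absorbed using $\|\vare\|_{L^\infty}\lesssim\alpha^*$ and $|F|,|\partial_yF|\lesssim s^{-1},s^{-2}$ on $y>-2b^{-\gamma}$. On the far left we use $\psi_B,\vphi_B\sim e^{y/B}$ together with the global bound \eqref{estimate_on_L_infty_of_dj_F}.

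For (iii), we pair $\mathcal R$ against the weighted operator and use Cauchy--Schwarz with \eqref{estimate_on_norms_L_2_B_of_R} and \eqref{conseq_of_BS_on_m_on_bs_and_on_gs}. This gives a bound of the form $\frac{\mu}{16}\int\vphi'_B\vare^2 + CB^2 s^{-4}$; the factor $B^2$ comes from $\|\cdot\|_{L^2_B}$ against $\vphi_B\lesssim e^{y/B}(1+|y|/B)$. The $\vec m\cdot\vec M Q$ part is handled by the orthogonality conditions, up to $|\vec m|\,\|\vare\|_{L^2_{sol}}$-type terms, which we absorb. For (iv), the terms with $\partial_s W$ and $\partial_s F$ multiply $\vare^2$ or $\vare^3$ with $|b_s|+|r_s|+|\vec m|\lesssim s^{-5/4}$, which is absorbable. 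Here $\partial_s F$ contains $\lambda^2\partial_t f$, controlled by \eqref{lemma_decaying_tails_estim_deriv_on_t} on the right and by smallness on the left.

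The main obstacle is (ii), in particular the bad-sign term $\frac{\lambda_s}{\lambda}\int\Lambda\vare\,(\ldots)$. Since $\frac{\lambda_s}{\lambda}\approx -b\approx -s^{-1}$, this term yields $-\frac{\lambda_s}{\lambda}\int y\vphi'_B\vare^2$-type contributions, and these are bounded via $y\vphi'\le\vphi$ by $b\int\vphi_B\vare^2\le (1+o(1))s^{-1}\int\vphi_B\vare^2$, with similar bounds for the $\psi_B$ part. We do not try to absorb this term. Instead, we keep it as the term $8s^{-1}\int\vphi_B\vare^2$ allowed on the right side of \eqref{Lyapounov_control_of_F}, and the factor $s^{-j}\frac{d}{ds}[s^j\,\cdot\,]$ adds $\frac js\mathcal{F}$, which is of the same order and is also bounded by that term for a suitable constant. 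The $\vec n$ term $\frac{\lambda^2}{4}\int\partial_y\vare(\ldots)$ is $O(s^{-2})\mathcal N_B^2$ and is harmless.

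Coercivity \eqref{coercivity_of_mathcal_F} follows from \eqref{relation_L2loc_with_N_B}, the orthogonality conditions and Lemma \ref{lemma_prop_of_linear_operator}(3), localized with $\psi_B$. The nonlinear part is bounded by $(\|W+F\|_{L^\infty}^4$ near the soliton$)+\alpha^*$ and absorbed, since outside $|y|\lesssim 1$ we have $|W+F|\lesssim s^{-1}+e^{-|y|}$.
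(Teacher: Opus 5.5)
\paragraph{Main issue: the virial coercivity.} Your overall structure is the paper's: you insert \eqref{equation_of_eps_s} into $\frac{d}{ds}\mathcal F$, split into linear, error, modulation and $\partial_s(W+F)$ pieces, and keep the scaling bad-sign term on the right. However, the step that produces the dissipation is justified incorrectly.

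On $\{y>-\frac B2\}$ one has $\psi_B\equiv 1$ and $\psi_B'\equiv 0$. The linear part of $2\int G_B(\vare)\,\partial_y[-\partial_y^2\vare+\vare-5Q^4\vare]$ is
\[
-\int\Big[3\vphi_B'(\partial_y\vare)^2+(\vphi_B'-\vphi_B''')\vare^2-5\vphi_B'Q^4\vare^2+20(\vphi_B-1)Q^3Q'\vare^2\Big].
\]
Near the soliton, where $\vphi_B=1+\frac yB$, this is $-\frac1B\int[3(\partial_y\vare)^2+\vare^2-5Q^4\vare^2+20yQ'Q^3\vare^2]$. The form you display, with $3\psi_B'(\partial_y\vare)^2$ and $\psi_B'Q^4\vare^2$, gives no gradient control at all in that region.

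More seriously, coercivity of this virial form does not follow from Lemma \ref{lemma_prop_of_linear_operator}(3), whether applied to $\vare\sqrt{\vphi_B'}$ or otherwise. The form equals $(\mathcal L\vare,\vare)+2\int(\partial_y\vare)^2+20\int yQ'Q^3\vare^2$. Since $Q$ is even and decreasing in $|y|$, $yQ'Q^3\le 0$ is a negative potential of size $O(1)$, which $\nu_0\|\vare\|_{H^1}^2$ cannot absorb.

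The missing input is the separate Martel--Merle virial coercivity under $(\vare,Q)=(\vare,\Lambda Q)=(\vare,y\Lambda Q)=0$. The paper imports it as Lemma \ref{lemma-localized-virial-estimate}, and then handles the discrepancy $\mathcal F^{(>)}_{1,3,3}$ between $Q'Q^3$ and $\partial_y(W+F)(W+F)^3$ with \eqref{proof_monot_H_decr_exp_phi_less_phi_d}. Lemma \ref{lemma_prop_of_linear_operator}(3) is the right tool only for \eqref{coercivity_of_mathcal_F}, where your argument is fine.

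\paragraph{Secondary issue: the $\frac js\mathcal F$ term.} The term $\frac js\mathcal F$ is not bounded by $s^{-1}\int\vphi_B\vare^2$, because $\mathcal F$ contains $\int\psi_B(\partial_y\vare)^2$ and the nonlinear term. The same objection applies to your ``similar bounds for the $\psi_B$ part''.

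The scaling term provides $2\frac{\lambda_s}{\lambda}\int\psi_B(\partial_y\vare)^2\approx-\frac2s\int\psi_B(\partial_y\vare)^2$. This does not cancel $\frac js\int\psi_B(\partial_y\vare)^2$, because $j=\frac52>2$. The remainder has to be absorbed into the dissipation using $\psi_B\lesssim B\vphi_B'$ and $s_0\gg B$. This is how the paper gets the error $Cs_0^{-1}B\int\vphi_B'(\vare^2+(\partial_y\vare)^2)$ in $\mathcal F_4$.

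Only the $\vphi_B\vare^2$ contributions go to the right-hand side. These come from $-\frac{\lambda_s}{\lambda}\int y\vphi_B'\vare^2$ and $\frac js\int\vphi_B\vare^2$, plus small multiples from $\mathcal F_1^{(>)}$, $\mathcal F_{4,3}$ and $\mathcal F_5$. The total constant is $\frac72+o(1)<8$, so your constant bookkeeping does work once this is done correctly.

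\paragraph{Minor point in (iv).} The expansion $\partial_sF=\frac{\lambda_s}{\lambda}\Lambda F+\frac{\sigma_s}{\lambda}\partial_yF+\lambda^{\frac72}\partial_tf$ has an $O(1)$ transport coefficient and a $y\,\partial_yF$ piece. So the smallness comes from $|\partial_yF|\lesssim s^{-2}$ on $\{y>-2b^{-\gamma}\}$ and from comparing $|y|$ with $B\vphi_B$, not from $|b_s|+|r_s|+|\vec m|$.
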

The proof of the proposition is similar to the proof of \cite[Proposition VI.1]{1M}
\vspace{0.3cm}
\begin{remark}
The positive term $s^{-1}\int\vphi_B\vare^2$ on the right-hand side of \eqref{Lyapounov_control_of_F} prevents us from closing the estimate on $\mathcal{F}$. In \cite{MMPIII}, the authors opt for interpolation using a fixed weighted norm, estimated independently, which implies the constraint $\nu >11/13$ on the blow-up rate. The present work and the paper \cite{1M} introduce the scaling term $\mathcal{J}$, with a flexible weight depending on the perturbing tail $f_0$. This enables a more subtle interpolation, thereby allowing us to reach the full range of blowup rates $\nu \in [1/2,1)$.
\end{remark}

\vspace{0.3cm}
Let $\mu$ be the parameter from Proposition \ref{Proposition_Monot_formula_on_F}, define $\theta$ large enough such that 
\begin{equation}\label{definition_of_1_over_theta}
    0<\frac{1}{\theta} < \frac{\mu}{32\times 8}.
\end{equation}

Define $\Upsilon$ an $C^{1000}(\RR)$ non-decreasing function such that
\begin{equation}\label{definition_of_Upsilon}
    \Upsilon(z) : =\begin{cases}
        0,  \quad \text{for  } z\leq \frac{1}{2\theta},\\
        1,  \quad \text{for  } z \geq \frac{1}{\theta}
    \end{cases} \qquad \text{and} \quad |\Upsilon''|+ |\Upsilon'''| \lesssim \Upsilon^{\frac{110}{111}}, \quad \Upsilon' \geq 0 \quad \text{on}\quad \RR.
\end{equation}
Such a condition on the $\Upsilon$ can be satisfied by a polynomial-type connection, for example.

In order to control the positive term $s^{-1}\int \vphi_B\vare^2$  appearing in the Lyapounov control of $\mathcal{F}$, we introduce another functional, $\mathcal{H}$ with an additional term at a scaling level. We reduce its control to the interval $y \gtrsim s$ thanks to the cut-off function $\Upsilon$.
A similar cut-off could also be introduced in \cite{1M} and \cite{MMPIII}. It would not modify the result, but it would simplify and improve several estimates.

In the case of the present paper, the presence of the cut-off $\Upsilon$ is crucial. The constant $\theta$ is chosen to be large enough to control the left in space part of $s^{-1}\int \vphi_B\vare^2$ by the smoothing term coming from Lyapounov control of $\mathcal{F}$.

Set $\kappa = 4$. (Other values of $\kappa$ are possible). In order to control $s^{-1}\int \vphi_B\vare^2$ on the left in space, we require that 
\begin{equation}\label{condition_on_kappa}
    \kappa > j+1 = \frac 72.
\end{equation}
We take the closest natural choice for $\kappa$. Note that if the value of $\kappa$ is too large, a stronger condition must be imposed on the derivatives of $\Upsilon$ in \eqref{definition_of_Upsilon}.

We consider the functional
\begin{equation*}
        \mathcal{H} \;=\; s^j \, \mathcal{F} + \mathcal{J}
    \end{equation*}
where 
\begin{equation}
    \mathcal{J} := \int \phi_{\lambda,\hatsig}(y)\,\vare^2\,dy \qquad\text{with}\qquad \phi_{\lambda,\hatsig}(y) : = e^{2\kappa \lambda y + \kappa \hatsig}\Upsilon(\lambda y).
\end{equation}

Firstly, we announce control of the derivative of 
the scaling term.
For simplicity, we define the following function
\begin{equation}
    \phi^d_{\lambda,\hatsig} := e^{2\kappa \lambda y + \kappa \hatsig}\Upsilon'(\lambda y).
\end{equation}

\begin{lemma}\label{lemma_control_of_scaling_term}
For all $s\in[s_0,s^*]$ holds
\begin{equation}\label{equation_control_of_scaling_term}
\begin{split}
    \frac{d}{ds}\big[\mathcal{J} \big] + \frac{\kappa \lambda}{2} \int \phi_{\lambda,\hatsig}(y)\, \vare^2 + \frac{\lambda}{2} \int \phi^d_{\lambda,\hatsig}(y)\,\vare^2 \lesssim s^{-102+\kappa}.
\end{split}
\end{equation}
\end{lemma}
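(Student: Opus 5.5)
The plan is to differentiate $\mathcal{J}$ directly, insert the equation \eqref{equation_of_eps_s} of $\vare$, and check that the leading-order terms combine into the negative quantity $-\kappa\lambda\int\phi_{\lambda,\hatsig}\vare^2-\lambda\int\phi^d_{\lambda,\hatsig}\vare^2$. Everything else is then absorbed into half of that quantity, or into the error $s^{-102+\kappa}$. Note that on $\supp\Upsilon(\lambda\,\cdot)$ we have $\lambda y\geq \frac1{2\theta}$, hence $y\gtrsim s$ by \eqref{conseq_of_BS1_strict_ineq}. Note also that $\phi_{\lambda,\hatsig}\lesssim s^{\kappa}e^{2\kappa\lambda y}$, and that $\phi_{\lambda,\hatsig}\lesssim s^\kappa$ on $\supp\Upsilon'(\lambda\,\cdot)$.

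\emph{Step 1: linear algebra.} We write
\begin{equation*}
\frac{d}{ds}\mathcal{J}=\int\partial_s\phi_{\lambda,\hatsig}\,\vare^2+2\int\phi_{\lambda,\hatsig}\vare\,\partial_s\vare ,
\end{equation*}
with
\begin{equation*}
\partial_s\phi_{\lambda,\hatsig}=\kappa\hatsig_s\phi_{\lambda,\hatsig}+\frac{\lambda_s}{\lambda}\,y\,\partial_y\phi_{\lambda,\hatsig}.
\end{equation*}
The Airy part contributes
\begin{equation*}
2\int\phi\vare\,\partial_y(-\vare_{yy}+\vare)=-3\int\phi'\vare_y^2+\int\phi'''\vare^2-\int\phi'\vare^2,
\end{equation*}
where $\phi'=2\kappa\lambda\phi+\lambda\phi^d$. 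Using $\hatsig_s=\lambda(1+O(|\vec m|))$, the term $\kappa\hatsig_s\phi$ combines with $-\int\phi'\vare^2$ to leave $-\kappa\lambda\int\phi\vare^2-\lambda\int\phi^d\vare^2$, up to $O(\lambda|\vec m|)\int\phi\vare^2$.

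The scaling term $2\frac{\lambda_s}{\lambda}\int\phi\vare\Lambda\vare=-\frac{\lambda_s}{\lambda}\int y\phi'\vare^2$ cancels exactly the term $\frac{\lambda_s}{\lambda}\int y\,\partial_y\phi\,\vare^2$ coming from $\partial_s\phi$. This cancellation reflects the scaling structure built into $\phi_{\lambda,\hatsig}$, and it is what lets us avoid the bad sign of $b\,y\phi'$.

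The translation part $\big(\frac{\sigma_s}{\lambda}-1\big)$ of $\vec m+\vec n$ equals $\big(\frac{\hatsig_s}{\lambda}-1\big)+\frac{\lambda^2}{4}$. It produces $-\big(\frac{\hatsig_s}{\lambda}-1+\frac{\lambda^2}4\big)\int\phi'\vare^2$, which is $O(\lambda(|\vec m|+\lambda^2))\int(\phi+\phi^d)\vare^2$. Since $|\vec m|\lesssim s^{-5/4}$ by \eqref{conseq_of_BS_on_m_on_bs_and_on_gs}, these remainders, as well as the $\vec m$-part of the $\Lambda$ term, are at most $\frac{\kappa\lambda}{100}\int\phi\vare^2+\frac{\lambda}{100}\int\phi^d\vare^2$.

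\emph{Step 2: the $\phi'''$ term (main obstacle).} We have
\begin{equation*}
\phi'''=\lambda^3 e^{2\kappa\lambda y+\kappa\hatsig}\big[(2\kappa)^3\Upsilon+3(2\kappa)^2\Upsilon'+6\kappa\Upsilon''+\Upsilon'''\big](\lambda y).
\end{equation*}
The terms with $\Upsilon$ and $\Upsilon'$ are $\lesssim\lambda^2(\lambda\phi+\lambda\phi^d)$ and are absorbed for $s_0$ large. The terms with $\Upsilon''$ and $\Upsilon'''$ have no sign and are not controlled by $\Upsilon$ or $\Upsilon'$ near $\lambda y=\frac1{2\theta}$; this is exactly why \eqref{definition_of_Upsilon} requires $|\Upsilon''|+|\Upsilon'''|\lesssim\Upsilon^{110/111}$. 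I would split pointwise. Where $\Upsilon\geq s^{-111}$, we have $\Upsilon^{110/111}\leq s\,\Upsilon$, so the contribution is $\lesssim\lambda^3 s\,\phi\lesssim\lambda^2\phi$, which is absorbable. Where $\Upsilon<s^{-111}$, we are on $\lambda y\leq\frac1\theta$, so $e^{2\kappa\lambda y+\kappa\hatsig}\lesssim s^\kappa$, and $\Upsilon^{110/111}\leq s^{-110}$. Using $\|\vare\|_{L^2}\lesssim1$ from \eqref{BS2}, this contributes $\lesssim\lambda^3s^{\kappa-110}\lesssim s^{-102+\kappa}$ (with room to spare). This is the step where the exponent $110/111$ and the size of the error are dictated.

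\emph{Step 3: nonlinear and source terms.} For the nonlinear term I would write $(W+F+\vare)^5-(W+F)^5=5(W+F)^4\vare+O(|W+F|^3\vare^2+|\vare|^5)$ and integrate by parts. This gives terms bounded by $\int\phi'\big(|W+F|^4+|\vare|^4\big)\vare^2$ plus $\int\phi|\partial_y(W+F)||W+F|^3\vare^2$. On $y\gtrsim s$, we have $|W|\lesssim e^{-y/2}$, $|F|\lesssim s^{-1}$ by \eqref{estimates_on_norms_L_infty_of_F}, and $\|\vare\|_{L^\infty}\lesssim\alpha^*$ by \eqref{L_infty_norm_of_vare_control}. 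So these terms are $\leq\delta(\alpha^*)\lambda\int(\phi+\phi^d)\vare^2$, which is absorbed.

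Finally, $2\int\phi\vare\,\mathcal{E}(W)$ with $\mathcal{E}(W)=-\vec m\cdot\vec MQ+\mathcal R$ from Lemma~\ref{lemma_on_EW_and_estimate_of_error}. Every piece of it is either localized by $Q$, $R$, $Q^4$ or $\Psi_b$, or decays like $e^{-y/2}$ on the right (via $P_b$, by \eqref{property_on_right_on_P}). Against the weight $\phi\lesssim s^\kappa e^{2\kappa\lambda y}$ on $y\gtrsim s$, Cauchy--Schwarz gives at most $e^{-cs}$. Collecting all terms yields \eqref{equation_control_of_scaling_term} with the constants $\frac{\kappa\lambda}2$ and $\frac\lambda2$.
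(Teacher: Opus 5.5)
Your proposal is correct and follows the paper's proof essentially step by step. You use the same splitting of $\frac{d}{ds}\mathcal{J}$ into the weight-derivative term and the $\vare_s$ term, and the same exact cancellation of the $\frac{\lambda_s}{\lambda}\int y\,\phi'_{\lambda,\hatsig}\vare^2$ terms. You likewise combine $\kappa\hatsig_s\int\phi_{\lambda,\hatsig}\vare^2$ with $-\int\phi'_{\lambda,\hatsig}\vare^2$, use the flatness condition on $\Upsilon$ for the $\phi'''_{\lambda,\hatsig}$ term, and use exponential decay on $\{y\gtrsim s\}$ for the nonlinear and $\mathcal{E}(W)$ terms. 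The only variations are minor: you split pointwise according to whether $\Upsilon\geq s^{-111}$ or $\Upsilon<s^{-111}$, where the paper uses a H\"older--Young interpolation with $|\Upsilon''|^{999/990}\lesssim\Upsilon$ (your version is slightly cleaner). You also bound $\int\phi_{\lambda,\hatsig}\vare\,\mathcal{R}$ through the pointwise right-side decay of each piece of $\mathcal{R}$, whereas the paper uses $\|\mathcal{R}\|_{L^2_B}$.
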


The above results enable us to control the evolution of $\mathcal{H}$. 
\begin{proposition}(Monotonicity formula on $\mathcal{H}$)\label{Proposition_Monot_formula_on_H}\\
The following estimate holds on $[s_0,s^*]$:
\begin{equation}\label{Prop_Monotonicity_formula_equation}
    \frac{d}{ds}\big[\mathcal{H}\big] \lesssim s^{j-4}.
\end{equation}
\end{proposition}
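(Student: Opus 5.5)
We combine Proposition \ref{Proposition_Monot_formula_on_F} and Lemma \ref{lemma_control_of_scaling_term}. By definition,
\begin{equation*}
\frac{d}{ds}\mathcal{H} = s^j\Big(s^{-j}\frac{d}{ds}\big[s^j\mathcal{F}\big]\Big) + \frac{d}{ds}\mathcal{J}.
\end{equation*}
Multiplying \eqref{Lyapounov_control_of_F} by $s^j$ and adding \eqref{equation_control_of_scaling_term} gives
\begin{equation*}
\frac{d}{ds}\mathcal{H} + \frac{\mu}{8}s^j\int\vphi'_B(\vare^2+(\partial_y\vare)^2) + \frac{\kappa\lambda}{2}\int\phi_{\lambda,\hatsig}\vare^2 \le 8s^{j-1}\int\vphi_B\vare^2 + CB^2 s^{j-4} + Cs^{\kappa-102}.
\end{equation*}
Since $\kappa=4$, we have $s^{\kappa-102}\le s^{j-4}$. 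It therefore remains to absorb the bad term $8s^{j-1}\int\vphi_B\vare^2$. We split the integral into three regions: $y<0$ (more generally $y$ bounded above by a multiple of $B$), $0<y<\frac{1}{\theta\lambda}$, and $y>\frac{1}{\theta\lambda}$.

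\emph{Left region.} On $y\le 2B$ (say), $\vphi_B\lesssim \vphi'_B$ up to a constant depending on $B$. Indeed, for $y<-B$ both are comparable to $e^{y/B}$, with $\vphi'_B=B^{-1}\vphi_B$, and near zero $\vphi'_B\sim B^{-1}$. Hence $s^{j-1}\int_{y<2B}\vphi_B\vare^2\lesssim Bs^{j-1}\int\vphi'_B\vare^2$, which is absorbed by $\frac{\mu}{16}s^j\int\vphi'_B\vare^2$ once $s_0\gg B$.

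\emph{Middle region} $2B<y<\frac{1}{\theta\lambda}$. Here $\vphi_B(y)=y/B\le \frac{1}{\theta\lambda B}$ and $\vphi'_B=1/B$, so $\vphi_B\le \frac{1}{\theta\lambda}\vphi'_B$. Using $\lambda\ge \frac12 s^{-1}$ from \eqref{conseq_of_BS1_strict_ineq}, we get
\begin{equation*}
8s^{j-1}\int_{2B<y<1/(\theta\lambda)}\vphi_B\vare^2\le \frac{16}{\theta}s^{j}\int\vphi'_B\vare^2 .
\end{equation*}
By the choice \eqref{definition_of_1_over_theta} of $\theta$, $\frac{16}{\theta}<\frac{\mu}{16}$, so this term is absorbed as well.

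\emph{Right region} $y>\frac{1}{\theta\lambda}$. Here $\Upsilon(\lambda y)=1$ and $\phi_{\lambda,\hatsig}=e^{2\kappa\lambda y+\kappa\hatsig}\ge e^{\kappa\hatsig}\gtrsim s^{\kappa}$, by \eqref{conseq_of_BS1_strict_ineq}. Also $\vphi_B\le y/B\lesssim e^{2\kappa\lambda y}/\lambda$, since $\lambda y\ge 1/\theta$ and $t\lesssim e^{2\kappa t}$. This yields
\begin{equation*}
s^{j-1}\vphi_B\lesssim s^{j}\,e^{2\kappa\lambda y}\lesssim s^{j-\kappa}\phi_{\lambda,\hatsig}.
\end{equation*}
Comparing with $\frac{\kappa\lambda}{2}\phi_{\lambda,\hatsig}\gtrsim s^{-1}\phi_{\lambda,\hatsig}$, absorption requires $s^{j-\kappa}\ll s^{-1}$, i.e. $\kappa>j+1$. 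This is exactly \eqref{condition_on_kappa}, and the gain $s^{j+1-\kappa}=s^{-1/2}$ closes the estimate for $s_0$ large.

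Collecting the three regions gives $\frac{d}{ds}\mathcal{H}\lesssim s^{j-4}$. The main obstacle I expect is the right region: it is where the competition between the polynomial weight $\vphi_B$ and the exponential scaling weight is decided. One must check carefully that the constants do not depend badly on $B$ or $\theta$. If they do, I would reserve a factor $s^{-1/2}$ in that region to beat any $B$- or $\theta$-dependent constant, since $s_0$ is chosen after $B$ and $\theta$.
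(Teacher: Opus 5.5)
Your proof is correct and is essentially the paper's argument. You add $s^j\times$\eqref{Lyapounov_control_of_F} to Lemma \ref{lemma_control_of_scaling_term}. You absorb $8s^{j-1}\int_{y<(\theta\lambda)^{-1}}\vphi_B\vare^2$ into $\frac{\mu}{16}s^j\int\vphi'_B\vare^2$ using $\vphi_B\le(\theta\lambda)^{-1}\vphi'_B$ and the choice \eqref{definition_of_1_over_theta} of $\theta$. You absorb the region $y>(\theta\lambda)^{-1}$ into $\frac{\kappa\lambda}{2}\int\phi_{\lambda,\hatsig}\vare^2$ via $\kappa>j+1$. The only cosmetic difference is that you split the left part at $y=2B$ and use $s_0\gg B$ there, whereas the paper handles all of $y<(\theta\lambda)^{-1}$ at once through $\vphi\le a\vphi'$ on $(-\infty,a]$ for $a\ge a_0$.
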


\begin{proof}[Proof of Proposition \ref{Proposition_Monot_formula_on_H}]

Assuming Proposition \ref{Proposition_Monot_formula_on_F} and Lemma \ref{lemma_control_of_scaling_term}, we have for some $C>0$
    \begin{equation}\label{proof_monot_H_control_of_H}
        \begin{split}
            \frac{d}{ds}\big[\mathcal{H}\big]+s^j \frac{\mu}{8} \int \vphi'_B \big( \vare^2 + (\partial_y \vare)^2\big) + \frac{\kappa \lambda}{2} \int e^{2\kappa \lambda y + \kappa \hatsig}\Upsilon(\lambda y) \vare^2
            \leq C s^{j-4} + 8 \, \, s^{j-1}\int \vphi_B\,\vare^2.
        \end{split}
    \end{equation}

We decompose
\begin{equation}\label{proof_monot_H_rewriting_of_the_bad_term}
    \begin{split}
        8\, s^{j-1}\int \vphi_B\,\vare^2 = 8\, s^{j-1}\int_{y<(\theta\lambda)^{-1}} \vphi_B\,\vare^2 + 8\, s^{j-1}\int_{y>(\theta\lambda)^{-1}} \vphi_B\,\vare^2.
    \end{split}
\end{equation}

Recall that $B$ was fixed in the proof of Proposition \ref{Proposition_Monot_formula_on_F}. The first term on the right hand side is handled using the following identity. Due to the continuity of $\vphi'$, for $a > a_0$ where 
\begin{equation}
    a_0 \geq \max\Big(2,\frac{2}{\min_{y\in[-1,2]}\vphi'(y)}\Big),
\end{equation} 
it holds $\vphi \leq a \vphi'$ on $(-\infty, a]$. Therefore
\begin{equation}
    \vphi_B \leq \,B\,a\,\vphi'_B \quad \text{on} \quad (-\infty, B\,a].
\end{equation}

We take $a>a_0$ such that $B a = (\theta \lambda)^{-1}$ where $\theta$ is fixed in \eqref{definition_of_1_over_theta}. We observe that for $s_0 \gg 1$, it holds
\begin{equation}\label{proof_monot_H_condition_on_a_for_cut_off}
    a>2 \iff (\theta \lambda)^{-1} > 2B.
\end{equation}
Thus, from the consequence of \eqref{BS1} on $\lambda$ in \eqref{conseq_of_BS1_strict_ineq} and the definition of $\theta$ in \eqref{definition_of_1_over_theta}, we get
\begin{equation}
\begin{split}
    8\, \,s^{j-1}\int_{y<(\theta\lambda)^{-1}}\vphi_B \vare^2 = 8\, \, s^{j-1} \int_{y<Ba} \vphi_B \vare^2
    &\leq 8\, \frac{1}{\theta}\lambda^{-1} \, s^{j-1}\,\int_{y<Ba} \vphi'_B \vare^2\\
    &\leq 2\times8\, \frac{1}{\theta}s^{j}\int \vphi'_B \vare^2 < \frac{\mu}{16}\,s^{j}\int \vphi'_B\vare^2.
\end{split}
\end{equation}
The constant $\frac{\mu}{16}$ allows us to control this term by the one appearing with $\frac{\mu}{8}$ in \eqref{proof_monot_H_control_of_H}.

The second term in \eqref{proof_monot_H_rewriting_of_the_bad_term} is handled as follows. By \eqref{proof_monot_H_condition_on_a_for_cut_off}, we have $\vphi_B(y) = y/B$ on the interval $\{y>(\theta\lambda)^{-1}\}$. Since $\mathbf{1}_{y>(\theta \lambda)^{-1}}\leq \Upsilon(\lambda y)$ on $\RR$ and the definition of $j,k$ yields $j-k+1= -\frac 12<0$. Therefore, by \eqref{condition_on_kappa} we get 
\begin{equation}
\begin{split}
    8\, s^{j-1}\int_{y>(\theta\lambda)^{-1}}\vphi_B \vare^2 &\leq \frac{8}{B} s^{j-1}\int_{y>(\theta\lambda)^{-1}}y \vare^2
    = \frac{4}{B\kappa}\lambda^{-1} \sup_{y>0}\big|2\kappa \lambda y\, e^{-2\kappa \lambda y}\big| \, s^{j-1}\, \int_{y>(\theta\lambda)^{-1}} e^{2\kappa\lambda y} \vare^2 \\
    &\lesssim \lambda^{-2}e^{-\kappa \hatsig}\,s^{j-1}\, \kappa\lambda \int_{y>(\theta\lambda)^{-1}}e^{2\kappa\lambda y + \kappa \hatsig} \vare^2
    \lesssim s^{j-\kappa+1}\,\kappa\lambda\int e^{2\kappa\lambda y + \kappa \hatsig}\,\Upsilon(\lambda y)\, \vare^2\\
    &\leq \frac{1}{2^{10}}\kappa\lambda \int e^{2\kappa\lambda y + \kappa \hatsig}\,\Upsilon(\lambda y)\, \vare^2.
\end{split}
\end{equation}
\end{proof}
Thus, \eqref{proof_monot_H_control_of_H} implies \eqref{Prop_Monotonicity_formula_equation}.

\vspace{0.3cm}
\begin{proof}[Sketch of the proof of the Proposition \ref{Proposition_Monot_formula_on_F}]

\textit{Proof of Lyapounov control on $\mathcal{F}$ \eqref{Lyapounov_control_of_F}.}
The mixed energy-virial functional is identical to the one introduced in the author's previous work (see \cite{1M}), and its control was fully studied in Section IV of that work. In the current paper we use the established control, but our aim is to determine a precise bound on the positive sign term. For this reason, most of the estimates presented are studied in the aforementioned work. Here, we refine the analysis of the ones that affect the scaling term.

\begin{remark}
The following relation will be useful throughout the proof. Let $g_1,g_2 \in H^1$, it holds
\begin{equation}\label{relation_on_partial_eps_and_order_5}
    \partial_y \Bigg( \frac{1}{6} \Big[(g_1+g_2)^6 - g_1^6 - 6 g_1^5 g_2 \Big] \Bigg) = \partial_y g_1 \Big[ (g_1+g_2)^5 - g_1^5 - 5 g_1^4 g_2\Big] + \partial_y g_2 \Big[ (g_1+g_2)^5 - g_1^5 \Big].
\end{equation}
\end{remark}

First, we show that there exists $0<\mu<1$ such that
\begin{equation}\label{mathcalF_estim_on_all_dds}
\begin{split}
    s^{-j}\frac{d}{ds}\big[s^j \mathcal{F} \big]  + 2\int_{y<-\frac B2} \psi'_B\,(\partial^2_y \vare)^2
    +\frac\mu 8 \int \vphi'_B \big( \vare^2 + (\partial_y \vare)^2 \big) + \int_{y<-\frac B2} \psi'_B\,(\partial_y \vare)^2 \leq 8\, s^{-1}\int \vphi_B \,\vare^2 + CB^2 s^{-4}.
\end{split}
\end{equation}

Using the equation \eqref{equation_of_eps_s} of $\vare_s$, we write 
\begin{equation}
    \begin{split}
        &s^{-j}\frac{d}{ds}\big[s^j\, \mathcal{F}\big]  
        = \frac{d}{ds}\big[\mathcal{F}\big]+ \frac js\mathcal{F} = 2\int G_B(\vare)\big( \vare_s - \frac{\lambda_s}{\lambda}\Lambda \vare \big) + 2\frac{\lambda_
        s}{\lambda}\int \Lambda \vare\, G_B(\vare) \\
        & + \frac{j}{s}\mathcal{F} - 2 \int \partial_s(W+F)\, \big[ (W+F+\vare)^5 - (W+F)^5 - 5(W+F)^4\vare \big] \psi_B\\
        & =:\mathcal{F}_{1}+\mathcal{F}_{2}+\mathcal{F}_{3}+\mathcal{F}_{4}+\mathcal{F}_{5},
    \end{split}
\end{equation}
where
\begin{equation}
\begin{split}
    \mathcal{F}_{1} &= 2 \int G_B(\vare) \, \partial_y\Big[ -\partial^2_{y}\vare + \vare - \big( (W+F+\vare)^5 - (W+F)^5 \big)\Big],\\
    \mathcal{F}_{2} &= -2\int G_B(\vare)\mathcal{E}(W),\\
    \mathcal{F}_{3}&= 2 \Big(\frac{\hatsig_s}{\lambda}-1\Big) \int G_B(\vare) \partial_y \vare + \frac{1}{2}\lambda^2 \int G_B(\vare) \partial_y \vare, \\
    \mathcal{F}_{4} &= 2\frac{\lambda_s}{\lambda}\int G_B(\vare)\, \Lambda \vare + \frac{j}{s}\mathcal{F}, \\
    \mathcal{F}_{5} &= -2 \int \psi_B\, \partial_s(W+F) \Big[ (W+F+\vare)^5 - (W+F)^5 - 5(W+F)^4\vare\Big]
\end{split}
\end{equation}
and
\begin{equation}
    G_B(\vare) = -\partial_y(\partial_y \vare \, \psi_B) + \vare \, \vphi_B - \big( (W+F+\vare)^5 - (W+F)^5\big)\psi_B.
\end{equation}

Each $\mathcal{F}_i$ is estimated separately, these estimates are detailed in \cite[Proof of the Proposition IV.1]{1M}. The regime remains the same, but the equation of $\vare$ differs with the appearance of $\hatsig + \frac{\tau(s)}{4}$, which is controlled in the quantity $\mathcal{F}_{3}$.

\textit{Estimate on $\mathcal{F}_{1}$:} 
The term $\mathcal{F}_{1}$ is studied separately on two domains $\{ y > -\frac{B}{2}\}$ and $\{ y < -\frac{B}{2}\}$.

\textit{Estimate of $\mathcal{F}_{1}$ on $(-\frac{B}{2}, +\infty)$:} The following virial estimate holds true on the announced domain. The proof of this result can be found in \cite[Appendix A]{1M} and its variants appear in \cite[Lemma 3.5]{Combet-Martel-17}, \cite[Lemma 3.4]{MMPI}. This proof is based on the coercivity property of a virial quadratic form under suitable repulsivity properties introduced in \cite[Proposition 4]{Martel-Merle-00}.

\begin{lemma}[Localised virial estimate]\label{lemma-localized-virial-estimate}
    There exists $0<\mu<1$ such that 
    \begin{equation}\label{estimate-localized-virial}
        \int_{y>-\frac B2} \big[ 3(\partial_y\vare)^2 + \vare^2 - 5 Q^4 \vare^2 + 20y Q' Q^3 \vare^2\big] \geq \frac{\mu}{2}\int_{y>-\frac B2} \big( \vare^2 + (\partial_y \vare)^2 \big) - \frac{C}{\mu}\frac{1}{B^5}\int \vare^2\,e^{-\frac{|y|}{2}}.
    \end{equation}
\end{lemma}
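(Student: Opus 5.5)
The plan is to follow the Martel--Merle virial argument: first reduce the localized quadratic form to the global virial form on $\RR$, whose coercivity is known, and then pay for the cut-off at $y=-\frac B2$ with a commutator error.

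\textbf{Step 1 (global coercivity).} Use the coercivity of the virial quadratic form from \cite[Proposition 4]{Martel-Merle-00}, in the version of \cite[Lemma 3.4]{MMPI}. For $v\in H^1$ with
$$(v,Q)=(v,\Lambda Q)=(v,y\Lambda Q)=0,$$
there is $\mu_1>0$ such that
$$\int \big[3v_y^2+v^2-5Q^4v^2+20yQ'Q^3v^2\big]\geq \mu_1\int (v_y^2+v^2).$$
Without orthogonality, the lower bound holds up to the loss $-\frac{1}{\mu_1}\big[(v,Q)^2+(v,\Lambda Q)^2+(v,y\Lambda Q)^2\big]$.

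\textbf{Step 2 (localization).} Fix a smooth cut-off $\zeta_B(y)=\zeta(y/B)$ with $\zeta\equiv1$ on $(-\frac14,+\infty)$ and $\zeta\equiv 0$ on $(-\infty,-\frac12)$. Set $v=\vare\zeta_B$ and apply Step 1 to $v$.

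The potential terms $5Q^4$ and $20yQ'Q^3$ decay like $e^{-4|y|}$. On $\operatorname{supp}(1-\zeta_B)$ we have $|y|\geq B/4$, so the potential part of the full form and of the localized form differ by at most $e^{-B}\int\vare^2e^{-|y|/2}$.

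For the quadratic part, expand $v_y=\vare_y\zeta_B+\vare\zeta_B'$, with $|\zeta_B'|\lesssim B^{-1}$, and note that $\zeta_B'$ is supported in $[-\frac B2,-\frac B4]$. The cross terms are absorbed into $\int_{y>-B/2}(\vare^2+\vare_y^2)$ with a factor $B^{-1}$, which is harmless once $B$ is large.

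\textbf{Step 3 (orthogonality defects).} By \eqref{orthogonal_conditions_in_lemma}, $(v,Q)=-(\vare(1-\zeta_B),Q)$, and similarly for $\Lambda Q$ and $y\Lambda Q$. Since $Q$, $\Lambda Q$ and $y\Lambda Q$ decay like $(1+|y|)e^{-|y|}$ and $1-\zeta_B$ is supported in $y<-B/4$, Cauchy--Schwarz gives
$$(v,Q)^2+(v,\Lambda Q)^2+(v,y\Lambda Q)^2\lesssim e^{-B/10}\int\vare^2e^{-|y|/2}\leq B^{-5}\int\vare^2 e^{-|y|/2}.$$

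\textbf{Step 4 (comparison of domains).} On $\{y>-\frac B2\}$ the localized form dominates the form of $v$, up to the errors above. The region $-\frac B2<y<-\frac B4$, where $\zeta_B<1$, needs care: there the potential terms are exponentially small, so the integrand $3\vare_y^2+\vare^2$ is nonnegative and at least $\vare^2+\vare_y^2$ (up to the small potential). It therefore contributes positively on its own.

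Combining the steps gives
$$\text{LHS}\geq \min(\mu_1,1)\int_{y>-B/2}(\vare^2+\vare_y^2)-CB^{-1}\int_{y>-B/2}(\vare^2+\vare_y^2)-CB^{-5}\int\vare^2e^{-|y|/2}.$$
With $\mu=\mu_1$ (reduced if necessary so that $\mu<1$) and $B$ large, this yields \eqref{estimate-localized-virial}.

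The main obstacle is the bookkeeping in Step 4: making sure the commutator terms from $\zeta_B'$ are genuinely absorbed and do not create a loss on $\{y<-B/2\}$. The point to check is that $\zeta_B'$ lives only inside $\{y>-B/2\}$, where the left-hand side already provides control.
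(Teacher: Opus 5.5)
Your proposal is correct and follows the argument the paper relies on. The paper gives no proof of its own and defers to \cite{1M}, Appendix A, and \cite{MMPI}, Lemma 3.4. As there, you apply the Martel--Merle virial coercivity to $\varepsilon\zeta_B$ and bound the orthogonality defects by $e^{-cB}\int\varepsilon^2e^{-|y|/2}$, using the decay of $Q,\Lambda Q,y\Lambda Q$ on $\{y<-\frac B4\}$. You then absorb the commutator terms, which live in $[-\frac B2,-\frac B4]\subset\{y>-\frac B2\}$, into the positive term for $B$ large. Two small points are cosmetic and do not affect the conclusion: $y\Lambda Q$ decays like $(1+|y|)^2e^{-|y|}$, and the potential error is $e^{-cB}$ rather than $e^{-B}$.
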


We denote $\mathcal{F}^{(>)}_1$ the restriction of $\mathcal{F}_{1}$ on $(-\frac{B}{2}, +\infty)$.
From the localised virial estimate, it holds
\begin{equation}
    \mathcal{F}^{(>)}_1 -\frac{\mu}{2}\int_{y>-\frac B2}\vphi'_B\,\big(\vare^2 + (\partial_y \vare)^2 \big) \leq \frac{\mu}{2^{20}}\int \vphi'_B\,\vare^2 + \mathcal{F}^{(>)}_{1,3,3}.
\end{equation}
The term $\mathcal{F}^{(>)}_{1,3,3}$ is given by
\begin{equation}
\begin{split}
    \mathcal{F}^{(>)}_{1,3,3} = 20 \int_{y>-\frac B2} (\vphi_B - \psi_B)\vare^2 \big[ Q'Q^3 - \partial_y(W+F)(W+F)^3 \big].
\end{split}
\end{equation}

We use the following rewriting
\begin{equation}
    \partial_y(W+F)(W+F)^3 - Q'Q^3 = \partial_y(W+F)\big( (W+F)^3 - Q^3\big) + Q^3 \partial_y (W-Q+F).
\end{equation}
Therefore, since $|\vphi_B - \psi_B| \lesssim \vphi_B $, we get
\begin{equation}\label{proof_monot_of_F_rewr_of_F133}
\begin{split}
     \mathcal{F}^{(>)}_{1,3,3} \lesssim \int_{y>-\frac B2} \vphi_B\, \vare^2 \,|\partial_y(W+F)|\, \big|(W+F)^3 - Q^3\big| + \int_{y>-\frac B2} \vphi_B\, \vare^2 Q^3 \big|\partial_y(W-Q+F)\big|.
\end{split}
\end{equation}
To treat the appearing terms, we use the following relation
\begin{equation}\label{proof_monot_H_decr_exp_phi_less_phi_d}
    e^{-\frac{|y|}{2}}\vphi_B \lesssim B \vphi'_B, \qquad \forall y \in \RR.
\end{equation}

For the first term on r.h.s. in \eqref{proof_monot_of_F_rewr_of_F133}, by interpolation, the computation $P'_b = P'\chi_b + b^{\gamma}$, estimates in Lemma \ref{lemma_with_all_estimates_on_r_F_etc} and since $Q',R',P' \in \mathcal{Y}$ with \eqref{proof_monot_H_decr_exp_phi_less_phi_d}, we get
\begin{equation}
    \begin{split}
        &\int_{y>-\frac B2} \vphi_B\, \vare^2 \,|\partial_y(W+F)|\, \big|(W+F)^3 - Q^3\big| \lesssim \int_{y>-\frac B2} \vphi_B\, \vare^2 |\partial_y(W+F)|Q^2 \big|bP_b + rR+F\big|\\
        &+ \int_{y>-\frac B2} \vphi_B\, \vare^2 \big|Q'+rR'+bP'\chi_b \big|\,\big|bP_b + rR +F\big|^3
        +\int_{y>-\frac B2} \vphi_B\,\vare^2 \big|b^{1+\gamma}\chi'(b^{\gamma} y)P + \partial_y F\big|\,\big| bP_b + rR + F \big|^3\\
        &\lesssim  Bs^{-\frac 32}_0 \int_{y>-\frac B2} \vphi'_B\vare^2 + (s^{-1-\gamma} + s^{-\frac 32})(s^{-1} + s^{-\frac 12})^3 \int_{y>-\frac B2}\vphi_B\vare^2\\
        &\lesssim Bs^{-\frac 32}_0 \int_{y>-\frac B2} \vphi'_B\vare^2 + s^{-1}_0\,s^{-1} \int_{y>-\frac B2}\vphi_B\vare^2.
    \end{split}
\end{equation}

The second term on the r.h.s. in \eqref{proof_monot_of_F_rewr_of_F133} is handled with \eqref{proof_monot_H_decr_exp_phi_less_phi_d}.
Therefore, using the exponential decay of $Q$, we get
\begin{equation}
    \begin{split}
        \int_{y>-\frac B2} \vphi_B\, \vare^2 Q^3 \big|\partial_y(W-Q+F)\big| \lesssim \int_{y>-\frac B2} \vphi'_B\,\vare^2\big( |b\,P'_b| + |r \partial_y R| + e^{-2|y|}|\partial_y F|\big)
        \lesssim Bs^{-1}_0\,\int_{y>-\frac B2}\vphi'_B\,\vare^2.
    \end{split}
\end{equation}

Choosing $s_0(B, \mu)\gg1$, we have
\begin{equation}\label{mathcalF_estim_onF1_on_right}
    \mathcal{F}_{1}^{(>)} + \frac\mu 4 \int_{y>-\frac B2} \vphi'_B\, \big( \vare^2 + (\partial_y \vare)^2 \big) \leq \frac{1}{2^{10}}s^{-1}\int \vphi_B\, \vare^2.
\end{equation}

\textit{Estimate on $\mathcal{F}_{1}^{(<)}$, domain $\{ y < -\frac{B}{2}\}$:}
\begin{equation}\label{mathcalF_estim_onF1_on_left}
    \mathcal{F}_{1}^{(<)}  \leq - 2 \int_{y<-\frac B2} \psi'_B \,(\partial^2_y \vare)^2 - \frac{\mu}{4}\int_{y<-\frac B2} \vphi'_B\, \big( \vare^2+(\partial_y \vare)^2 \big) -
    \int_{y<-\frac B2} \psi'_B \, (\partial_y \vare)^2.
\end{equation}

\textit{Estimate on $\mathcal{F}_{2}$:} 

\begin{equation}\label{mathcalF_estim_onF2}
     \mathcal{F}_{2}  \leq C B^2 \, s^{-4} + \frac{\mu}{2^{10}}\int \vphi'_B \big( \vare^2 + (\partial_y \vare)^2 \big).
\end{equation}

\textit{Estimate on $\mathcal{F}_{3}$:} As in \cite{1M}, we get
\begin{equation}
    \Big|\int G_B(\vare)\partial_y \vare \Big| \lesssim B \int \vphi'_B \big(\vare^2 + (\partial_y \vare)^2 \big).
\end{equation}
Therefore
\begin{equation}\label{mathcalF_estim_onF3}
    \begin{split}
     \mathcal{F}_{3} \lesssim (|\vec{m}| + \lambda^2)\, B\, \int \vphi'_B \big( \vare^2 + (\partial_y\vare)^2 \big)  \lesssim s^{-\frac{5}{4}} \, B\, \int \vphi'_B \big( \vare^2 + (\partial_y\vare)^2 \big)
     \leq \frac{\mu}{2^{10}}\int \vphi'_B \big( \vare^2 + (\partial_y\vare)^2 \big).
     \end{split}
\end{equation}

\textit{Estimate on $\mathcal{F}_{4}$:}
It holds
\begin{equation}
    \mathcal{F}_4 + \frac{\lambda_s}{\lambda}\int y\vphi'_B\, \vare^2 - \frac{j}{s}\int \vphi_B\,\vare^2 \leq Cs^{-1}_0 B\int \vphi'_B\,\big(\vare^2+ (\partial_y \vare)^2\big) + \mathcal{F}_{4,3},
\end{equation}
where
\begin{equation}
    \mathcal{F}_{4,3} = 2\frac{\lambda_s}{\lambda}\int \psi_B\,\Lambda(W+F)\,\big( (W+F+\vare)^5 - (W+F)^5 - 5(W+F)^4 \vare \big).
\end{equation}
By interpolation, using the definition of $\Lambda$ and since $\psi_B \lesssim B \vphi'_B$ on $\RR$, we get the following estimate
\begin{equation}
    \begin{split}
        &\mathcal{F}_{4,3} \lesssim B \Big|\frac{\lambda_s}{\lambda}\Big| \int \vphi'_B \,\big( |W+F| + y|\partial_y(W+F)|\big)\big( |W+F|^3 \vare^2 + |\vare|^5\big)\\
        &\lesssim s^{-1}_0B\int \vphi'_B\,\vare^2 + B\Big|\frac{\lambda_s}{\lambda}\Big|\int y \vphi'_B \big( \big|Q'+bP'\chi_b + rR'\big| + \big| b^{1+\gamma}P\chi'(b^{\gamma}y) + \partial_y F \big|\big)\big( |W+F|^3 \vare^2 + |\vare|^5\big)\\
        &\lesssim s^{-1}_0B\int \vphi'_B\,\vare^2 + s^{-1}_0\,s^{-1}\int y\vphi'_B\,\vare^2.
    \end{split}
\end{equation}
We have also used Lemma \ref{lemma_with_all_estimates_on_r_F_etc}, the decay $|y|(|Q'|+|P'|+|R'|)\vphi'_B \lesssim \vphi'_B $ on $\RR$. 

By the definition of $\vphi$ in \eqref{condition_on_the_def_of_vphi_and_psi}, we have $y\vphi'_B \leq \vphi_B$. Thus, since by \eqref{conseq_of_BS1_strict_ineq}, it holds $\big|\frac{\lambda_s}{\lambda}\big| \leq 2s^{-1}$, we have
\begin{equation}\label{mathcalF_estim_onF4}
\begin{split}
    &\mathcal{F}_{4} \leq CBs^{-1}_0\,\int \vphi'_B\,\big(\vare^2+ (\partial_y \vare)^2\big) + 2\Big|\frac{\lambda_s}{\lambda}\Big| \int y\vphi'_B \,\vare^2 + \frac{j}{s}\int \vphi_B\,\vare^2\\
    &\leq (4 + j) s^{-1}\, \int \vphi_B\, \vare^2 + \frac{\mu}{2^{10}}\,\int \vphi'_B\,\big(\vare^2+ (\partial_y \vare)^2\big).
\end{split}
\end{equation}

\textit{Estimate on $\mathcal{F}_{5}$:}
\begin{equation}\label{mathcalF_estim_onF5}
    \begin{split}
        \mathcal{F}_{5} \lesssim Bs^{-1}_0\,\int \vphi'_B\,\vare^2 + Bs^{-\frac32}_0 \,\frac{\lambda_s}{\lambda}\int \vphi_B\,\vare^2
        \leq \frac{1}{2^{10}}s^{-1}\int \vphi_B\,\vare^2 + \frac{\mu}{2^{10}}\int \vphi'_B\, \vare^2. 
    \end{split}
\end{equation}

Finally, combining the estimates \eqref{mathcalF_estim_onF1_on_right}, \eqref{mathcalF_estim_onF1_on_left}, \eqref{mathcalF_estim_onF2}, \eqref{mathcalF_estim_onF3}, \eqref{mathcalF_estim_onF4}, \eqref{mathcalF_estim_onF5}, yields the estimate announced in \eqref{mathcalF_estim_on_all_dds}.

\vspace{0.4cm}

\textit{Proof of the coercivity of $\mathcal{F}$ \eqref{coercivity_of_mathcal_F}:}\\
The functional $\mathcal{F}$ is rewritten as $\mathcal{F} = \mathcal{F}_{I} + \mathcal{F}_{II}$ with 
\begin{equation}
    \begin{split}
        &\mathcal{F}_{I} =  \int \Big[ (\partial_y \vare)^2\psi_B +\vare^2 \vphi_B - 5Q^4 \vare^2\psi_B \Big],\\
        &\mathcal{F}_{II} = -\frac{1}{3}\int \psi_B \Big[\big( W+F+\vare \big)^6 - (W+F)^6 - 6(W+F)^5\vare - 15Q^4\vare^2 \Big].
    \end{split}
\end{equation}
The term $\mathcal{F}_{I}$ is treated as in the proof of the localised virial estimate \eqref{estimate-localized-virial} which can be found in \cite[Appendix A]{1M}. Other possible references are \cite[Appendix A]{Martel-Merle-02} and \cite[Lemma 3.5 ]{Combet-Martel-17}. Hence, there exists $\eta>0$ such that for $B$ large enough, it holds
\begin{equation}
     \mathcal{F}_1 \geq \eta\, \mathcal{N}^2_B.
\end{equation}

The term $\mathcal{F}_{II}$ can be rewritten as follows
\begin{equation}
    \begin{split}
        &\mathcal{F}_2 = 
         -\frac{1}{3}\int \psi_B \Big[ \big( W+F+\vare\big)^6 - \big( W+F\big)^6 - 6 \big( W+F\big)^5\vare - 15\big( W+F\big)^4\vare^2 \Big]\\
        & +5\int \psi_B\Big[\big( W+F \big)^4 - Q^4 \Big]\vare^2.
     \end{split}
\end{equation}
The estimates in Lemma \ref{lemma_with_all_estimates_on_r_F_etc} and \eqref{BS1} yields
\begin{equation}
    \big| \mathcal{F}_2 \big| \leq \frac{\eta}{2^{10}}\,\mathcal{N}_B^2.
\end{equation}
This completes the proof of the coercivity of $\mathcal{F}$ and proof of Proposition \ref{Proposition_Monot_formula_on_H}.
\end{proof}

\vspace{0.3cm}
\begin{proof}[Proof of Lemma \ref{lemma_control_of_scaling_term}]
We write
\begin{equation}\label{proof_scal_term_first_eq_derivation}
    \frac{d}{ds}\big[ \mathcal{J}\big] = \int \partial_s \Big( e^{2\kappa \lambda y + \kappa \hatsig}\Upsilon(\lambda y) \Big) \vare^2 + 2 \int \vare_s \, \vare \, e^{2\kappa \lambda y + \kappa \hatsig}\Upsilon(\lambda y)=: Z_1 + Z_2.
\end{equation}

The first term is rewritten as follows
\begin{equation}
    Z_1 = 2\kappa \lambda_s \int y\, \phi_{\lambda,\hatsig}\, \vare^2 + \lambda_s \int y\, \phi^d_{\lambda,\hatsig}\, \vare^2 + \kappa \lambda \Big( \frac{\hatsig_s}{\lambda}-1\Big)\int \phi_{\lambda,\hatsig}\, \vare^2 + \kappa \lambda \int \phi_{\lambda,\hatsig}\,\vare^2.
\end{equation}
The equation of $\vare$ in \eqref{equation_of_eps_s} gives
\begin{equation}\label{proof_scal_term_Z2}
\begin{split}
    Z_2 = &2\int \phi_{\lambda,\hatsig}\, \vare \, \Big( \partial_y[-\partial^2_y \vare + \vare] + \frac{\lambda_s}{\lambda}\Lambda \vare\Big)
    + 2 \Big[\Big( \frac{\hatsig_s}{\lambda}-1\Big)+ \frac{\lambda^2}{2}\Big] \int \phi_{\lambda,\hatsig}\, \partial_y \vare\, \vare\\
    &- 2 \int \phi_{\lambda,\hatsig}\, \vare\, \partial_y\big[ (W+F+\vare)^5 - (W+F)^5 \big]
    -2 \int \phi_{\lambda,\hatsig}\, \vare\, \mathcal{E}(W)=: Z_{2,1} + Z_{2,2}-Z_{2,3}-Z_{2,4}.
\end{split}
\end{equation}
The terms $Z_{2,1}, Z_{2,2}$ will compensate the terms in $Z_1$ and the terms $Z_{2,3}, Z_{2,4}$ are estimated thanks to the cut-off $\Upsilon$ and the decay of $Q$.
We have
\begin{equation}
    Z_{2,1} = -\int \phi_{\lambda,\hatsig}' \, \vare^2 - \frac{\lambda_s}{\lambda}\int y \phi_{\lambda,\hatsig}'\, \vare^2 -3\int \phi_{\lambda,\hatsig}'\, (\partial_y \vare)^2 + \int \phi_{\lambda,\hatsig}'''\, \vare^2.
\end{equation}

Calculating $\phi_{\lambda,\hatsig}'$, we get the following equality
\begin{equation}
\begin{split}
    Z_1 -\int \phi_{\lambda,\hatsig}' \, \vare^2 - \frac{\lambda_s}{\lambda}\int y \phi_{\lambda,\hatsig}'\, \vare^2 - 
    = -\kappa \lambda \int \phi_{\lambda,\hatsig}\, \vare^2 - \lambda \int \phi^d_{\lambda,\hatsig}\, \vare^2 + \kappa \lambda\Big(\frac{\hatsig_s}{\lambda}-1\Big) \int \phi_{\lambda,\hatsig}\, \vare^2.
\end{split}
\end{equation}
The second term in \eqref{proof_scal_term_Z2} can be rewritten by integration by parts as
\begin{equation}
    Z_{2,2} = - \Big( \frac{\hatsig_s}{\lambda} -1 \Big)\int \phi_{\lambda,\hatsig}'\, \vare^2 - \frac{\lambda^2}{2}\int \phi_{\lambda,\hatsig}'\,\vare^2.
\end{equation}
Therefore
\begin{equation}
\begin{split}
    &Z_1 + Z_{2,1} + Z_{2,2} = -6 \kappa \lambda \int \phi_{\lambda,\hatsig} (\partial_y \vare)^2 - (\kappa \lambda + \kappa \lambda^3)\int \phi_{\lambda,\hatsig}\, \vare^2\\
    &-3\lambda \int \phi^d_{\lambda,\hatsig}\, (\partial_y \vare)^2 - (\lambda +\frac12 \lambda^3)\int \phi^d_{\lambda,\hatsig}\, \vare^2 +\int \phi'''_{\lambda,\hatsig}\, \vare^2\\
    &- \kappa\lambda\Big( \frac{\hatsig_s}{\lambda}-1 \Big)\int \phi_{\lambda,\hatsig}\, \vare^2 - \lambda \Big(\frac{\hatsig_s}{\lambda}-1 \Big)\int \phi^d_{\lambda,\hatsig}\, \vare^2.
\end{split}
\end{equation}
The terms with $\lambda \Big( \frac{\hatsig_s}{\lambda} -1\Big)$ can be hidden in the terms with $\lambda$ and $\phi_{\lambda,\hatsig}$ or $\phi^d_{\lambda,\hatsig}$, using $|\vec{m}|$ and the presence of $\lambda$.

The term with $\phi'''_{\lambda,\hatsig}$ presents $\lambda^3\ll \lambda$, thanks to $\lambda^2 \lesssim s^{-2}$ some terms can be directly hidden as previously. We have
\begin{equation}
    \Big| \int \phi'''_{\lambda,\hatsig}\, \vare^2 \Big| \lesssim \lambda^3 \Big( \int \phi_{\lambda,\hatsig}\,\vare^2 + \int \phi^d_{\lambda,\hatsig}\,\vare^2 \Big) +  \int e^{2\kappa \lambda y +\kappa \sigma}\, \big(\Upsilon'' + \Upsilon'''\big)(\lambda y)\,\vare^2.
\end{equation}
The term presenting $\Upsilon''(\lambda y)$ is handled the following way, using the condition \eqref{definition_of_Upsilon} on the flatness of the $\Upsilon$. We use as well that $\supp((\Upsilon)^{\frac{1}{10}}(\lambda y)) \subset \supp(\Upsilon'(\lambda y )) \subset [(2\theta\lambda)^{-1},  (\theta\lambda)^{-1}]$. 
\begin{equation}
    \begin{split}
        &\lambda^3 \int e^{2\kappa \lambda y + \kappa \hatsig}| \Upsilon''|(\lambda y)\, \vare^2 \lesssim \lambda^3 \Big( \int e^{2\kappa\lambda y + \kappa \hatsig}|\Upsilon''|^{\frac{1}{10}}(\lambda y)\,\vare^2 \Big)^{\frac{1}{100}}\Big(\int e^{2\kappa\lambda y + \kappa \hatsig}\, |\Upsilon''|^{\frac{999}{990}}(\lambda y)\,\vare^2 \Big)^{\frac{99}{100}}\\
        &\lesssim \lambda^2 \lambda^{100}\int e^{2\kappa \lambda y +\kappa \hatsig} |\Upsilon''|^{\frac{1}{10}}(\lambda y)\,\vare^2 + \lambda^2 \int e^{2\kappa \lambda y +\kappa \hatsig}\, |\Upsilon''|^{\frac{999}{990}}(\lambda y)\, \vare^2\\
        &\lesssim s^{-102+\kappa}\int^{(2\theta\lambda)^{-1}}_{(\theta \lambda)^{-1}} e^{2\kappa \lambda y}\vare^2 + \lambda^2 \int \phi_{\lambda,\hatsig}\,\vare^2 \lesssim \delta(\alpha^*)s^{-102 + \kappa} + \lambda^2 \int \phi_{\lambda,\hatsig}\,\vare^2.
    \end{split}
\end{equation}
The term with $\Upsilon'''(\lambda y)$ is handled similarly.

Therefore, for some $C>0$
\begin{equation}
\begin{split}
     &Z_1 + Z_{2,1} + Z_{2,2} \leq -6 \kappa \lambda \int \phi_{\lambda,\hatsig} (\partial_y \vare)^2 - \frac{3\kappa \lambda}{4} \int \phi_{\lambda,\hatsig}\, \vare^2\\
    &-3\lambda \int \phi^d_{\lambda,\hatsig}\, (\partial_y \vare)^2 - \frac{3\lambda}{4}\int \phi^d_{\lambda,\hatsig}\, \vare^2 + C\,s^{-102+\kappa}.
\end{split}
\end{equation}

By integration by parts the term $Z_{2,3} $ is rewritten as
\begin{equation}
    -Z_{2,3} =  2 \int \phi'_{\lambda,\hatsig}\, \vare\, \big[ (W+F+\vare)^5 - (W+F)^5 \big] + 2\int \phi_{\lambda,\hatsig}\, (\partial_y \vare) \, \big[ (W+F+\vare)^5 - (W+F)^5 \big].
\end{equation}

The first term on the right hand side is handled as follows
\begin{equation}\label{proof_scaling_term_ref_term_s100}
\begin{split}
    &\Big| \int \phi'_{\lambda,\hatsig}\, \vare\, \big[ (W+F+\vare)^5 - (W+F)^5 \big] \Big| \lesssim
     \lambda \int \big(\phi_{\lambda,\hatsig}+\phi^d_{\lambda,\hatsig} \big) |W+F|^4\, \vare^2 + \lambda \int \big(\phi_{\lambda,\hatsig}+\phi^d_{\lambda,\hatsig} \big)\, \vare^6\\
    &\lesssim  \lambda \big(r^4+ \|F\|^4_{L^{\infty}} + \|\vare\|^4_{L^{\infty}} \big)\int \big(\phi_{\lambda,\hatsig}+\phi^d_{\lambda,\hatsig} \big)\, \vare^2 + \lambda\int_{y>(2\theta\lambda)^{-1}} e^{2\kappa \lambda y + \kappa \hatsig} Q^4_b \,\vare^2\\
    &\lesssim \lambda\, \delta(\alpha^*,s^{-1}_0) \int \big(\phi_{\lambda,\hatsig}+\phi^d_{\lambda,\hatsig} \big)\, \vare^2 + \lambda\,\int_{y>(2\theta\lambda)^{-1}} e^{2\kappa \lambda y + \kappa \hatsig} Q^4_b \,\vare^2.
\end{split}
\end{equation}
The term presenting the $Q_b$ is handled using the decay for $y> 0 , |Q_b(y)| \lesssim e^{-\frac{y}{2}}$, for $s_0 \gg1$ it holds 
\begin{equation*}
    \lambda\int_{y>(2\theta\lambda)^{-1}} e^{2\kappa \lambda y + \kappa \hatsig} Q^4_b \,\vare^2 \lesssim \lambda\, s^{\kappa}\int_{y>(2\theta\lambda)^{-1}} e^{2\kappa \lambda y - 2 y}\, \vare^2 \lesssim \lambda\, s^{\kappa}\, e^{-c \lambda^{-1}}\|\vare\|^2_{L^2_{sol}} \lesssim s^{-100}.
\end{equation*}

The second term on the r.h.s. of $Z_{2,3}$ is rewritten with \eqref{relation_on_partial_eps_and_order_5}, we get
\begin{equation}
\begin{split}
    &\int \phi_{\lambda,\hatsig}\, (\partial_y \vare) \, \big[ (W+F+\vare)^5 - (W+F)^5 \big] = \frac{1}{6} \int \phi_{\lambda,\hatsig} \partial_y \big[ (W+F+\vare)^6 - (W+F)^6 - 6(W+F)^5 \vare \big]\\
    &- \int \phi_{\lambda,\hatsig} \partial_y(W+F)\,\big[(W+F+\vare)^5 -(W+F)^5 - 5(W+F)^4\vare \big]\\
    &\lesssim \int \phi'_{\lambda,\hatsig} \big[|W+F|^4\vare^2 + \vare^6\big] + \int \phi_{\lambda,\hatsig} \big( |\partial_y W| + |\partial_y F|\big)\,\big[|W+F|^3\vare^2 + |\vare|^5 \big].
\end{split}
\end{equation}
The first term on the r.h.s. of the inequality above is handled as in \eqref{proof_scaling_term_ref_term_s100}, is therefore bounded by $s^{-100}$.

The second term is managed with the decay $|\partial_y W| \lesssim e^{-\frac{y}{2}}$ for $y> 0$, the bounds $\|W+F\|_{L^{\infty}_y} \lesssim 1$ and the bound of $|\partial_y F|$ in \eqref{estimate_on_L_infty_of_dj_F}. For $s_0 \gg 1$ it holds 
\begin{equation}
\begin{split}
    &\int \phi_{\lambda,\hatsig} \big( |\partial_y W| + |\partial_y F|\big)\,\big[|W+F|^3\vare^2 + |\vare|^5 \big]\\
    &\lesssim \int \phi_{\lambda,\hatsig}\,|\partial_y W| \vare^2 + \|\vare\|^{3}_{L^{\infty}}\int \phi_{\lambda,\hatsig}\,|\partial_y W|\vare^2 + \int \phi_{\lambda,\hatsig}\, |\partial_y F|\, \vare^2 + \|\vare\|^3_{L^{\infty}}\int \phi_{\lambda,\hatsig} |\partial_y F| \vare^2\\
    &\lesssim \int_{y>\lambda^{-1}} e^{2\kappa \lambda +\kappa \hatsig} e^{-\frac{y}{2}}\,\vare^2 + \lambda^{\frac12}\lambda \int \phi_{\lambda,\hatsig} \,\vare^2\\
    &\lesssim s^{\kappa} e^{-c\lambda^{-1}}\|\vare\|^2_{L^2_{sol}} + s^{-\frac 12} \lambda  \int \phi_{\lambda,\hatsig}\, \vare^2
    \lesssim s^{-100} +s^{-\frac 12}_0 \lambda  \int \phi_{\lambda,\hatsig}\, \vare^2.
    \end{split}
\end{equation}

The term $Z_{2,4}$ is rewritten as
\begin{equation}
    Z_{2,4} =- 2 \int \vec{m} \cdot \vec{M}\, Q\, \phi_{\lambda,\hatsig}\, \vare + 2 \int \mathcal{R}\, \phi_{\lambda,\hatsig}\, \vare.
\end{equation}
Since $\Upsilon(\lambda y) \leq \mathbf{1}_{(y>(2\theta\lambda)^{-1})}$, by ${\lambda^{-1}} \gtrsim s$ and the exponential decay, for ${s_0}\gg1$ it holds
\begin{equation}
    \begin{split}
     \Big| \int \vec{m} \cdot \vec{M}\, Q\, \phi_{\lambda,\hatsig}\, \vare \Big| \lesssim |\vec{m}|\,s^{\kappa}\, \int_{y>(2\theta\lambda)^{-1}}e^{2\kappa \lambda y} e^{-\frac 12 y} |\vare| \lesssim |\vec{m}|\, s^\kappa\, \Big( \int_{y>0} e^{-\frac{1}{10}y} \vare^2 \Big)^{\frac 12}\Big(\int_{y > (2\theta \lambda)^{-1}} e^{2\kappa \lambda y -\frac{1}{10}y} \Big)^{\frac 12}\\
     \lesssim s^\kappa\,|\vec{m}|\, \|\vare\|_{L^2_{sol}} e^{-cs} \lesssim s^{-100}.
    \end{split}
\end{equation}

For $s_0(B)\gg 1$, the following holds true
\begin{equation}
    \begin{split}
        \Big| \int \mathcal{R}\,\phi_{\lambda,\hatsig}\, \vare  \Big| \lesssim s^\kappa\, \Big( \int \mathcal{R}^2 \, e^{\frac{y}{B}}\Big)^{\frac 12}\Big( \int_{y> (2\theta\lambda)^{-1}}e^{4\kappa \lambda y - \frac{y}{B}} \, \vare^2 \Big)^{\frac 12} \lesssim s^\kappa \, \|\mathcal{R}\|_{L^2_B} e^{-cs}\|\vare\|_{L^2}\lesssim s^{-100}.
    \end{split}
\end{equation}

Combining all the estimates, \eqref{proof_scal_term_first_eq_derivation} yields a precise control  \begin{equation}\label{precise_control_of_scaling_term}
\begin{split}
    &\frac{d}{ds}\Big[ \int \phi_{\lambda,\hatsig}\,\vare^2\,dy \Big] + 6\kappa \lambda\int \phi_{\lambda,\hatsig}\, (\partial_y \vare)^2 + 3\lambda \int \phi^d_{\lambda,\hatsig} (\partial_y \vare)^2
    +\frac{\kappa \lambda}{2} \int \phi_{\lambda,\hatsig}\, \vare^2\\
    &+ \frac{\lambda}{2} \int \phi^d_{\lambda,\hatsig}\,\vare^2 \lesssim s^{-102+\kappa}.
\end{split}
\end{equation}
\end{proof}


%% file: Construction_of_exploding_solution.tex
\section{Construction of bubbling solutions}\label{section_construction}
 
Fix the final bootstrap assumption as
\begin{equation}\tag{BS3}\label{BS3}
    \lvert h(s) \rvert \leq s^{-\frac12 - 4\rho}.
\end{equation}
We adopt the theory developed in Sections \ref{section_decomposition} and \ref{section_energy_estimates}. We state the general technical result in the rescaled setting, which yields the general result stated in Theorem \eqref{Theorem_principal_result}. 

\begin{proposition}\label{Prop_on_blow_up}
Let $x_0>0$ and $s_0>0$ be large enough and fix $\hatsig_0$, $b_0$ such that
\begin{equation}\label{init_cond_hatsig_b}
    \big| e^{\hatsig_0} - s_0 \big| \leq s_0^{1-4\rho} \qquad \text{and} \qquad \big|b_0 - s^{-1}_0\big| \leq s^{-1-4\rho}_0.
\end{equation}
Let $\vare_0 \in H^1(\RR)$ be such that 
\begin{equation}\label{conditons_ortho_and_decroissance_on_vare_0}
s_0^{j} \, \|\varepsilon_0\|^2_{H^1} +s^{j}_0 \int_{y>0} y \vare^2 + s^{\kappa}_0 \int_{y > (3\theta)^{-1}s_0} e^{y}\,\vare^2_0 <s^{-1}_0\, , \quad (\varepsilon_0,y\Lambda Q) = (\varepsilon_0, \Lambda Q) = (\varepsilon_0, Q) = 0\, .
\end{equation}
Then, there exists 
    \begin{equation}\label{init_cond_in_BS}
        \lambda_0 \in [\lambda^-_0,\lambda^+_0], \quad \text{ where } \quad \lambda^{\pm}_0 := \Big(-\frac{4}{\int Q}c_0\, e^{-\frac 12 \hatsig_0} \pm s^{-\frac 12 -4\rho}_0\Big)^2,
    \end{equation}
such that the solution $U$ of \eqref{gKdV_principal_eq} evolving from the initial data
\begin{equation}
    U_0(x):= \lambda_0^{-\frac{1}{2}}\Big( Q_{b_0}+\lambda_0^{-\frac{1}{2}}f(\tau(s_0),\sigma_0)R + \vare_0 \Big)\Big( \frac{x-\sigma_0}{\lambda_0}\Big) + f(\tau(s_0),x)
\end{equation}
has a decomposition as in Lemma \ref{lemma_on_decomposition_around_Q} and satisfies \eqref{BS1}, \eqref{BS2} and \eqref{BS3} on the interval $[s_0,+\infty)$.

\end{proposition}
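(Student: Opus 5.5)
The plan is a standard bootstrap, with a topological (intermediate value) argument on the one unstable direction, namely the parameter $\lambda_0$, which controls $h$. For each $\lambda_0\in[\lambda_0^-,\lambda_0^+]$ we let $U$ be the solution evolving from $U_0$. Since $\vare_0$ is small and orthogonal, Lemma \ref{lemma_on_decomposition_around_Q} applies at $s_0$, and the decomposition is exactly $(\lambda_0,\sigma_0,b_0,\vare_0)$. The bound \eqref{init_cond_in_BS} is equivalent to $|h(s_0)|\le s_0^{-\frac12-4\rho}$. Indeed, $h(s_0)=\lambda_0^{\frac12}+\frac{4}{\int Q}c_0e^{-\frac12\hatsig_0}$ with $c_0<0$, so $\lambda_0^{\frac12}=-\frac{4c_0}{\int Q}e^{-\frac12\hatsig_0}+h(s_0)$. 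Together with \eqref{init_cond_hatsig_b}, this shows that \eqref{BS1}--\eqref{BS3} hold at $s_0$ with strict inequalities and room to spare. We also check \eqref{assumption_on_sigma_on_t}: $\tau(s)\le 1$ by Remark \ref{remark_on_the_compact_time_interval}, while $\sigma\ge\ln s\gg x_0$. Let $s^*(\lambda_0)$ be the maximal time on which \eqref{BS1}--\eqref{BS3} and the decomposition hold.

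\textbf{Step 1: closing (BS2).} We integrate Proposition \ref{Proposition_Monot_formula_on_H} on $[s_0,s]$. This gives $\mathcal H(s)\le \mathcal H(s_0)+Cs^{j-3}$. The initial bound $\mathcal H(s_0)\lesssim s_0^{-1}$ follows from \eqref{conditons_ortho_and_decroissance_on_vare_0}, since $\phi_{\lambda_0,\hatsig_0}\lesssim s_0^\kappa e^{2\kappa\lambda_0 y}$ is supported in $y\gtrsim s_0$ and $\mathcal F(s_0)\lesssim\|\vare_0\|_{H^1}^2+\int_{y>0}y\vare_0^2$. Using the coercivity \eqref{coercivity_of_mathcal_F} and $\mathcal J\ge 0$, we get $s^j\mathcal N_B^2\lesssim s_0^{-1}+s^{j-3}$. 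As $j-3<0$, this yields $\mathcal N_B^2\le \frac12 s^{-j}$ for $s_0$ large. The $H^1$ part follows from \eqref{estimate_on_L2_norm_of_vare} and \eqref{estimate_on_L2_norm_of_grad_vare}, taking $\delta,x_0^{-1},s_0^{-1}$ small. We expect a mismatch between the weights $e^{y}$ in the hypothesis and $e^{2\kappa\lambda_0 y}$ in $\mathcal J$. It is harmless because $2\kappa\lambda_0\le 1$.

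\textbf{Step 2: closing (BS1) given (BS3).} From \eqref{conseq_of_BS_on_g_and_hs}, integrating $\partial_s g$ backwards is not directly available. Instead we use $\lambda^2|g(s)|\lesssim\lambda^2|g(s_0)|+\lambda^2\int_{s_0}^s|g_s|$ together with \eqref{estimate_on_gs_mod_estim}. This gives $\lambda^2|\partial_s g|\lesssim s^{-3}+s^{-1-j/2}$, hence $|g|\lesssim s^{1-j/2}$ up to initial data. This is small compared with the main term $b/\lambda^2\sim s$. Next, (BS3) gives $\lambda^{\frac12}=e^{-\frac12\hatsig}+O(s^{-\frac12-4\rho})$ with $c_0=-\frac{\int Q}{4}$. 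Combined with $\hatsig_s=\lambda(1+O(|\vec m|))$ and $|\vec m|\lesssim s^{-5/4}$, this yields the ODE $(e^{\hatsig})_s=1+O(s^{-4\rho})$. Integrating gives $|e^{\hatsig}-s|\le \frac12 s^{1-\rho}$, then $|\lambda-s^{-1}|\le\frac12 s^{-1-\rho}$, and finally $b=-\lambda_s/\lambda+O(s^{-5/4})$. The bound for $b$ then comes from $g$: $b=\lambda^2 g+\lambda^{\frac12}e^{-\frac12\hatsig}$, which equals $s^{-1}+O(s^{-1-4\rho}+s^{-1-j/2})$ since $\rho<\frac1{16}$. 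Thus (BS1) is strictly improved.

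\textbf{Step 3: the obstacle, (BS3).} This is the instability and cannot be closed directly. By \eqref{estimate_on_hs_with_g}, $h_s=-\frac12\lambda^{\frac52}g+O(\lambda^{\frac12}|\vec m|)$. Here we use the relation between $g$ and $h$: $\lambda^2 g=b-\lambda^{\frac12}e^{-\frac12\hatsig}$. Writing $e^{-\frac12\hatsig}=\lambda^{\frac12}-h$ and $b=-\lambda_s/\lambda+O(|\vec m|)$, the function $h$ satisfies a linearized equation whose leading part is repulsive, of the form $h_s\approx c\,s^{-1}h$ with $c>\frac12+4\rho$, plus error terms $O(s^{-\frac32-4\rho-\epsilon})$. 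The first thing we would try is to compute this linearization exactly from the formal law \eqref{sketch_proof_formal_law_of_param}. The goal is to obtain the outgoing transversality $\frac{d}{ds}\bigl(s^{1+8\rho}h^2\bigr)>0$ whenever $|h|=s^{-\frac12-4\rho}$.

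We then argue by contradiction. Suppose that for every $\lambda_0$ we have $s^*(\lambda_0)<\infty$. By Steps 1--2, the exit must occur through \eqref{BS3}, that is $|h(s^*)|=(s^*)^{-\frac12-4\rho}$. The transversality makes the map $\lambda_0\mapsto s^{*\,\frac12+4\rho}h(s^*)\in\{-1,1\}$ continuous. At the endpoints $\lambda_0^\pm$ it equals $\pm1$, since exit is immediate there. Continuity and connectedness of $[\lambda_0^-,\lambda_0^+]$ give the contradiction. Hence some $\lambda_0$ yields a solution satisfying \eqref{BS1}--\eqref{BS3} on $[s_0,+\infty)$.
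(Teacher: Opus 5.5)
Your overall architecture matches the paper's. You bootstrap on \eqref{BS1}--\eqref{BS3}, strictly improve \eqref{BS1}--\eqref{BS2} so that the exit can only happen through \eqref{BS3}, prove transversality at the exit, and conclude by connectedness in $\lambda_0$. Steps 1 and 2 follow the paper, up to two slips that do not affect the conclusion:
\begin{itemize}
\item Integrating $\frac{d}{ds}\mathcal H\lesssim s^{j-4}$ gives an error $\lesssim s_0^{j-3}$, not $s^{j-3}$. You still get $\mathcal N_B^2\lesssim s_0^{-\frac12}s^{-j}$.
\item The bound $|\partial_s g|\lesssim s^{1-\frac j2}$ gives $|g(s)|\lesssim |g(s_0)|+s^{2-\frac j2}$, not $|g|\lesssim s^{1-\frac j2}$. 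Together with $|g(s_0)|\lesssim s_0^{1-4\rho}$, which follows from \eqref{init_cond_hatsig_b}--\eqref{init_cond_in_BS}, this yields $\lambda^2|g|\lesssim s^{-1-4\rho}+s^{-\frac54}$. That is enough to close $b$.
\end{itemize}

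The genuine gap is Step 3: the repulsive law $h_s\approx c\,s^{-1}h$ with $c>\frac12+4\rho$ does not exist.
\begin{itemize}
\item At the formal level (with $l_0=0$), $h$ is the conserved quantity $l_1$ of \eqref{l1}. More precisely, with $\vec m=0$ one has exactly $h_s=-\frac12\lambda^{\frac52}g$ with $g$ constant, so $h$ only drifts by $O(|g|\lambda^{\frac32})$.
\item The linearization you sketch is circular. Substituting $b=-\frac{\lambda_s}{\lambda}+O(|\vec m|)$ and $e^{-\frac12\hatsig}=\lambda^{\frac12}-h$ into $\lambda^2 g$ only gives back the definition $h_s=\frac12\lambda^{-\frac12}\lambda_s+\frac12\hatsig_s e^{-\frac12\hatsig}$. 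In other words, $h_s\approx-\frac\lambda2 h+\frac12\lambda^{-\frac12}(\lambda_s+\lambda^2)$.
\item In that expression the $h$-coefficient has the wrong sign. The remainder can only be controlled through $g$, and once it is written in terms of $g$ the $h$-terms cancel exactly.
\end{itemize}

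What drives the exit is the shrinking of the box $s^{-\frac12-4\rho}$, not any growth of $h$. Set $G(s)=s^{1+8\rho}h^2(s)$. If $G(s_1)=1$, then $G'(s_1)=\pm2h_s(s_1)s_1^{\frac12+4\rho}+(1+8\rho)s_1^{-1}$, so it suffices that $|h_s|\ll s^{-\frac32-4\rho}$. In your ansatz any $c>-\frac12-4\rho$ would do, and the truth is $c=0$. The paper obtains $|h_s|\lesssim s^{-\frac74}$ from \eqref{estimate_on_hs_with_g}, $|\vec m|\lesssim s^{-\frac54}$ and $\lambda^2|g|\lesssim s^{-\frac54}$ (see \eqref{conseq_of_BS_on_g_and_hs}). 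The comparison $s^{-\frac74}\ll s^{-\frac32-4\rho}$ is exactly where $\rho<\frac1{16}$ is used.

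Be careful with the point your own Step 2 raises: $\lambda^2|g|\lesssim s^{-\frac54}$ needs $g(s_0)$ to be negligible. For most $\lambda_0\in[\lambda_0^-,\lambda_0^+]$ one has $\lambda_0^2|g(s_0)|\sim s_0^{-1-4\rho}$. Then $\frac12\lambda^{\frac52}|g(s_0)|$ is comparable to $s^{-\frac32-4\rho}$ for $s$ close to $s_0$, and transversality is no longer automatic there. The simplest fix is to take $b_0=\lambda_0^{\frac12}e^{-\frac12\hatsig_0}$. This makes $g(s_0)=0$ (the choice $l_0=0$ of the formal analysis), depends continuously on $\lambda_0$, and is compatible with \eqref{BS1} at $s_0$.
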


\begin{proof}[Proof of the Proposition \ref{Prop_on_blow_up}] 

For $\lambda_0 \in [\lambda_0^-,\lambda_0^+]$, we define
\begin{equation}
    s^*(\lambda_0) :=\sup\{s\geq s_0, \text{ Lemma \ref{lemma_on_decomposition_around_Q}  applies and \eqref{BS1}-\eqref{BS2}-\eqref{BS3} hold on $[s_0,s]$} \}.
\end{equation}

Note that the supremum $s^*$ is determined solely by the saturation of the bootstrap assumptions. The study conducted previously shows that all the estimates in the preceding sections hold on the time interval $[s_0,s^*]$.

For all $\lambda_0 \in [\lambda_0^-,\lambda_0^+]$, we suppose that $s^*(\lambda_0)$ is finite. First, we strengthen the bootstrap assumption on the blow up parameters. The energy estimates conducted in Section \ref{section_energy_estimates} close the weighted norm condition. The proof is finished with a topological argument on the behaviour of the function $h$ for a finite $s^*$, which yields a contradiction.

\vspace{0.2cm}
\textit{Closing \eqref{BS1}.} 
\begin{itemize}
\item $\eqref{BS1}_{\hatsig}:$ Using the condition on $c_0$, we get
\begin{equation}
    \begin{split}
        \big|  \partial_s \big( e^{\hatsig} \big) - 1  \big| = e^{\hatsig} \Big| \hatsig_s - \Big( \frac{4}{\int Q} c_0 \Big)^2 e^{-\hatsig} \Big| \leq e^{\hatsig} \big| \hatsig^{\frac 12}_s  + e^{-\frac 12 \hatsig}\big| \Big( |h(s)| + \big|\hatsig^{\frac 12}_s - \lambda^{\frac 12}\big| \Big).
    \end{split}
\end{equation}
By Lipschitz character of $a \in [c,+\infty)\mapsto a^{\frac 12}$ for any $c>0$, we have
\begin{equation}
    \big| \hatsig^{\frac 12}_s - \lambda^{\frac 12} \big|  = \lambda^{\frac{1}{2}}\Big| \Big(\frac{\hatsig_s}{\lambda}\Big)^{\frac 12} - 1 \Big| \leq \lambda^{\frac 12} |\vec{m}|.
\end{equation}
By \eqref{BS1}, \eqref{BS3}, \eqref{conseq_of_BS_on_m_on_bs_and_on_gs} and the initial condition \eqref{init_cond_hatsig_b}, it follows
\begin{equation}
    \big|  e^{\hatsig}  - s \big| \lesssim s^{1-4\rho}.
\end{equation}

\item $\eqref{BS1}_{\lambda}:$ Since $(4\,c_0/\int Q)^2 = 1$ and \eqref{BS1}, \eqref{BS3}, it holds
\begin{equation}
    \big| \lambda - e^{-\hatsig} \big| = |h(s)|\, \Big| \lambda^{\frac 12}- \Big(\frac{4\, c_0}{\int Q} \Big) e^{-\frac 12 \hatsig} \Big| \lesssim s^{-1-4\rho}.
\end{equation}
By the improved $(BS1_{\hatsig})$, we get
\begin{equation}
    \big| \lambda - s^{-1} \big| \lesssim \big| e^{-\hatsig} - s^{-1} \big| + s^{-1-4\rho} \lesssim s^{-1}\, e^{-\hatsig}\, \big| e^{\hatsig} - s\big| + s^{-1-4\rho} \lesssim s^{-1-4\rho}.
\end{equation}

\item $\eqref{BS1}_b:$ We write (using $c_0 = - \int Q /4$)
\begin{equation}
    \big| s\, b - 1 \big| \leq s\, \lambda^{2}\,\Big( |g(s)| + \big| \lambda^{-\frac 32}e^{-\frac 12 \hatsig}  - s^{-1}\, \lambda^{-2}\big| \Big) \leq s \, \lambda^2 \, \Big( |g(s)| + \lambda^{-\frac 32} \big| e^{-\frac 12 \hatsig} - s^{-1}\, \lambda^{-\frac 12} \big|\Big).
\end{equation}
The $\frac12 -$Holder and Lipshitz far from zero properties of $a:\mathbb{R}^{+} \mapsto a^{\frac12}$ yields
\begin{equation}
\begin{split}
    &\big| e^{-\frac 12 \hatsig}  - s^{-1}\,\lambda^{-\frac 12}\big| \leq \big|e^{-\frac 12 \hatsig} - s^{-\frac 12} \big| + s^{-1}\,\big|\lambda^{-\frac 12}- s^{\frac 12}  \big|\\
    &\leq s^{-\frac 12}\,e^{-\frac 12 \hatsig}\, \big| e^{\frac 12 \hatsig} - s^{\frac 12}\big| + s^{-1}\,\lambda^{-\frac 12} \,\big| \big(s\,\lambda \big)^{\frac 12} - 1 \big|\\
    &\lesssim s^{-1}\,\big| e^{\hatsig} - s \big|^{\frac 12} + s^{\frac 12}\, \big|  \lambda - s^{-1}\big|.
\end{split}
\end{equation}
Thus, the improved conditions $(BS1_\hatsig)$, $(BS1_\lambda)$ and the estimate on $g$ in \eqref{conseq_of_BS_on_g_and_hs} yields
\begin{equation}
    \big| b - s^{-1}\big| \lesssim s^{-2}|g(s)| + s^{-1-2\rho} \lesssim s^{-2}|g_0| + s^{-\frac{11}{4}} + s^{-1 - 2\rho} \lesssim s^{-1-2\rho}.
\end{equation}

\end{itemize}

\vspace{0.2cm}
\textit{Closing \eqref{BS2}.}\\
Proposition \ref{Proposition_Monot_formula_on_H} yields
\begin{equation}
    \frac{d}{ds}\big[ \mathcal{H} \big] \;\lesssim\;   s^{-\frac{3}{2}}.
\end{equation}

Integration on $[s_0,s]$ for some $s\leq s^*$ and positivity of the scaling term, yields
\begin{equation}
    s^{j}\mathcal{F}(s)\leq \mathcal{H}(s) \lesssim s^{-\frac 12}_0 + s^{j}_0 \mathcal{F}(s_0) + \int e^{2\kappa \lambda_0 y + \kappa \hatsig_0}\,\Upsilon(\lambda_0 y) \vare^2_0.
\end{equation}

The bound on $\lambda$ in \eqref{conseq_of_BS1_strict_ineq} and since $\Upsilon(\lambda y) \leq \mathbf{1}_{y>(3\theta)^{-1}s_0}$, yields
\begin{equation}
    \begin{split}
        \int e^{2\kappa \lambda_0 y + \kappa \hatsig_0}\,\Upsilon(\lambda_0 y) \vare^2_0 \lesssim s^{\kappa}_0 \int_{y>(2\theta \lambda_0)^{-1}} e^{2\kappa \lambda_0 y} \vare^2_0 \lesssim s^{\kappa}_0 \int_{y>(3\theta)^{-1}s_0}e^{3\kappa s^{-1}_0 y}\vare^2_0 \lesssim s^{\kappa}_0\int_{y>(3\theta)^{-1}s_0}e^{y}\vare^2_0.
    \end{split}
\end{equation}
By interpolation and \eqref{conditons_ortho_and_decroissance_on_vare_0}, it holds
\begin{equation}
    s^j_0\mathcal{F}(s_0) \lesssim s^{j}_0\|\vare\|^2_{H^1} + s^j_0\int_{y>0}y\,\vare^2_0 \lesssim s^{-1}_0.
\end{equation}
Thus, by the coercivity of $\mathcal{F}$ in \eqref{coercivity_of_mathcal_F}, it holds
\begin{equation}
    \mathcal{N}^2_B \lesssim \mathcal{F}(s) \lesssim s^{-j} \mathcal{H}(s) \lesssim s^{-\frac 12}_0\,s^{-j}\leq \frac 12 s^{-j}.
\end{equation}
Therefore, the control of the local norm $\mathcal{N}^2_B$ is strictly improved.

\vspace{0.4cm}
The $H^1$ norm is improved by the estimates \eqref{estimate_on_L2_norm_of_vare} and \eqref{estimate_on_L2_norm_of_grad_vare} (choosing $s_0$ and $x_0$ to be sufficiently large).

\vspace{0.2cm}
\textit{Contradiction by a continuity argument.}\\
We construct a map $\Phi: [\lambda^-_0, \lambda^+_0] \to \{-1,1\}$ and we aim to prove its continuity and that $\Phi(\lambda^{\pm}_0) = \pm 1$, which would complete the contradiction argument.

We consider a continuous transformation $\lambda_0 \in [\lambda^-_0, \lambda^+_0] \mapsto \mu_0 \in [-1,1]$. Since $s^*$ is finite, we consider a map $s^* \in [s_0, +\infty) \mapsto h(s^*)(s^*)^{\frac 12 + 4\rho}\in \{-1,1\}$, which is also continuous.

It remains to show that $\mu_0(\lambda_0) \mapsto s^*$ is continuous. Define $G(s) = h^2(s)s^{1+8\rho}$. For $s_1 \in [s_0,s^*]$ such that $G(s_1) = 1$, for $s_0(\rho)\gg1$ the following transversality property holds true
\begin{equation}
\begin{split}
    G'(s_1) 
    &= 2h_s(s_1)h(s_1)s_1^{1+8\rho} + (1+8\rho)h^2(s_1)s_1^{8\rho}
    = \pm 2 h_s(s_1)s_1^{\frac 12 + 4\rho} + (1+8\rho)s^{-1}_1\,G(s_1)\\
    &\geq -C\,s_1^{-1-4\big(\frac{1}{16}-\rho\big)}+(1+8\rho)s^{-1}_1 \geq -Cs^{-4\big(\frac{1}{16}-\rho\big)}_0\,s^{-1}_1+(1+8\rho)s^{-1}_1 >\frac{1+8\rho}{2}s^{-1}_1 >0.
\end{split}
\end{equation}
In particular, $G(s^*)=1$.
Fix $\mu_0 \in (-1,1)$, therefore $s^* > s_0$. 
By continuity of $G$ and $G'$ we know that for some $0<\epsilon <s^*-s_0$ and $\delta_\epsilon>0$ sufficiently small, it holds $G(s^*+\epsilon)> 1+\delta$ and $G(s)<1-\delta$ for all $s \in [s_0,s^*-\epsilon]$. 

By continuous dependence on initial data for parameters, there exists $\epsilon_\mu$, such that 
\begin{equation}
    \lvert \mu_0 - \tilde{\mu}_0\rvert\leq \epsilon_\mu \Rightarrow \lvert G(s) - \tilde{G}(s) \rvert \leq \frac{\delta}{2} \qquad 
    \text{on }\;[s_0,s^*+\epsilon].
\end{equation}
Let $\tilde{G}$ and $\tilde{s}^*$ be the quantities associated with $\tilde{\mu}_0$. 
We get 
\begin{equation*}
    \forall s \in [s_0,s^*-\epsilon], \quad  \tilde{G}(s) \leq G(s)+\frac{\delta}{2} < 1-\frac{\delta}{2} \quad\Rightarrow\quad \tilde{s}^* \geq s^*-\epsilon
\end{equation*}
and
\begin{equation*}
    \tilde{G}(s^*+\epsilon)\geq G(s^*+\epsilon)-\frac{\delta}{2} >1+\frac{\delta}{2} \quad\Rightarrow\quad \tilde{s}^* \leq s^*+\epsilon.
\end{equation*}
This proves the continuity of $\Phi$ for $\mu_0 \in (-1,1)$. 
In the case $\mu_0 = \pm1$, we get $s^* = s_0$. By a similar argument, we get the continuity of $\Phi$ for $\mu_0 = \pm1$. 
Finally, since we have strictly improved \eqref{BS1} and \eqref{BS2}, the assumption \eqref{BS3} is saturated in $s^*$. Therefore, $s^*(\lambda^{\pm}_0) = s_0$ and $\Phi(\lambda^{\pm}_0) = \pm 1$ respectively. This leads to a contradiction. 

\end{proof}

\vspace{0.4cm}
\begin{proof}[End of the proof of the Theorem \ref{Theorem_principal_result}:]
Note that
\begin{equation}
    \|\partial_x U(t)\|_{L^2_x } = \lambda^{-1}(s)\|Q'\|_{L^2} + o_{t\to T}(\lambda^{-1}(s)).
\end{equation}

Indeed, we have
\begin{equation}
    \Big\| \partial_x\Big( U(t,x) - \lambda^{-\frac12}Q(y)  \Big) \Big\|_{L^2_x} \leq 
    \lambda^{-\frac 32}\,  b \|P'_b(y)\|_{L^2_x} + \lambda^{-\frac32}\| (\partial_x \vare)(s,y) \|_{L^2_x} + \|\partial_x f(t,x)\|_{L^2_x},
\end{equation}
where $y = \frac{x-\sigma(s)}{\lambda(s)}$ and $s = s(t)$.

Therefore, the definition of $P_b$, Lemma \ref{lemma_on_the_decaying_tail} and the estimate \eqref{estimate_on_L2_norm_of_grad_vare} yields
\begin{equation}
    \Big| \|\partial_x U(t)\|_{L^2_x} - \lambda^{-1}\|Q'\|_{L^2} \Big|\lesssim 
    \lambda^{-1}\,s^{-\frac 58} + |E(U_0)|^{\frac 12} + \delta(x^{-1}_0).
\end{equation}

From the definition of the rescaled variable and \eqref{BS2}, we have
    \begin{equation}
        T-t = \int^{+\infty}_{s(t)} \frac{d\tau(s')}{ds'}\, ds' = \int^{+\infty}_{s(t)}\lambda^{3}(s')\,ds' \sim \frac 12 s^{-2} \quad \text{and}\quad \lambda(s) \sim \big(2(T-t)\big)^{\frac 12}
    \end{equation}
Hence,
\begin{equation}
    \|\partial_x U(t)\|_{L^2} \sim \lambda^{-1}(s)\,\|Q'\|_{L^2} \sim c\, (T-t)^{-\frac 12}.
\end{equation}
Moreover, since $e^{\hatsig} \sim s$ and that $s^{-1} \sim \sqrt{2(T-t)}$, we have for $t \uparrow T$
    \begin{equation}
        \sigma(s) \sim \frac 12 \ln\Big( \frac{1}{2(T-t)}\Big) + \frac t4.
    \end{equation}

This concludes the proof of the Theorem \ref{Theorem_principal_result}.
\end{proof}